\newcommand{\tocspace}{0.1ex}
\let\oldtocsection=\tocsection
\let\oldtocsubsection=\tocsubsection
\let\oldtocsubsubsection=\tocsubsubsection
\renewcommand{\tocsection}[3]{\hspace{0em}\oldtocsection{#1}{#2}{#3}\vspace{\tocspace}}
\renewcommand{\tocsubsection}[3]{ \hspace{1em} \oldtocsubsection{#1}{\small{#2}}{\small{#3}}\vspace{\tocspace} }
\renewcommand{\tocsubsubsection}[3]{\hspace{2em}\oldtocsubsubsection{#1}{\small{#2}}{\small{#3}}}
\newcommand{\marginparstretch}{0.6}
\let\oldmarginpar\marginpar
\renewcommand\marginpar[1]{\-\oldmarginpar[\framebox{\setstretch{\marginparstretch}\begin{minipage}{\marginparwidth}{\raggedleft\tiny #1}\end{minipage}}]{\framebox{\setstretch{\marginparstretch}\begin{minipage}{\marginparwidth}{\raggedright\tiny #1}\end{minipage}}}}
\tikzset{
        cvertex/.style={circle,draw=black,inner sep=1pt,outer sep=3pt},
        vertex/.style={circle,fill=black,inner sep=1pt,outer sep=3pt},
        star/.style={circle,fill=yellow,inner sep=0.75pt,outer sep=0.75pt},
        tvertex/.style={inner sep=1pt,font=\scriptsize},
        gap/.style={inner sep=0.5pt,fill=white}}
\tikzstyle{mybox} = [draw=black, fill=blue!10, very thick,
\tikzstyle{boxtitle} =[fill=blue!50, text=white,rectangle,rounded corners]
\newtheorem{thm}{Theorem}[section]
\newtheorem{prop}[thm]{Proposition}
\newtheorem{lemma}[thm]{Lemma}
\newtheorem{defin}[thm]{Definition}
\newtheorem{cor}[thm]{Corollary}
\newtheorem{summary}[thm]{Summary}
\newtheorem{example}[thm]{Example}
\newtheorem{setup}[thm]{Setup}
\newtheorem{remark}[thm]{Remark}
\newtheorem{notation}[thm]{Notation}
\numberwithin{equation}{section}
\newcounter{enumeratenoindentcounter}
\newcommand{\m}{\mathfrak{m}}
\newcommand{\n}{\mathfrak{n}}
\newcommand{\p}{\mathfrak{p}}
\renewcommand{\t}[1]{\textnormal{#1}}
\def\op{\mathop{\rm op}\nolimits}
\def\rest{\mathop{\rm res}\nolimits}
\def\mod{\mathop{\rm mod}\nolimits}
\def\coh{\mathop{\rm coh}\nolimits}
\def\Qcoh{\mathop{\rm Qcoh}\nolimits}
\def\Mod{\mathop{\rm Mod}\nolimits}
\def\Hom{\mathop{\rm Hom}\nolimits}
\def\RHom{\mathop{\rm {\bf R}Hom}\nolimits}
\def\End{\mathop{\rm End}\nolimits}
\def\Ext{\mathop{\rm Ext}\nolimits}
\def\add{\mathop{\rm add}\nolimits}
\def\Ker{\mathop{\rm Ker}\nolimits}
\def\Supp{\mathop{\rm Supp}\nolimits}
\def\Spec{\mathop{\rm Spec}\nolimits}
\def\Perf{\mathop{\rm{per}}\nolimits}
\def\D{\mathop{\rm{D}^{}}\nolimits}
\def\Db{\mathop{\rm{D}^b}\nolimits}
\def\Id{\mathop{\rm{Id}}\nolimits}
\newcommand{\idem}[1]{e_{#1}}
\newcommand{\K}{\mathop{{}_{}\mathbb{C}}\nolimits}
\newcommand{\con}{\mathrm{con}}
\newcommand{\fib}{\mathrm{fib}}
\newcommand{\CA}{\mathrm{A}_{\con}}
\newcommand{\CAR}{\mathrm{A}_{\mathrm{fib}}}
\newcommand{\AB}{\mathrm{A}}
\newcommand{\BB}{\mathrm{B}}
\def\ab{\mathop{\rm ab}\nolimits}
\def\redu{\mathop{\rm red}\nolimits}
\def\uotimes{\mathop{\underline{\otimes}}\nolimits}
\def\Rf{{\rm\bf R}f}
\def\RHom{{\rm{\bf R}Hom}}
\def\sHom{\mathcal{H}om}
\newcommand\RDerived[1]{{\rm\bf R}{#1}}
\newcommand\LDerived[1]{{\rm\bf L}{#1}}
\newcommand\art{\mathsf{Art}}
\newcommand\proart{\mathsf{pArt}}
\newcommand\cart{\mathsf{CArt}}
\newcommand\alg{\mathsf{Alg}}
\newcommand\calg{\mathsf{CAlg}}
\newcommand\Sets{\mathsf{Sets}}
\newcommand\cDef{c\mathcal{D}ef}
\newcommand\Def{\mathcal{D}ef}
\newcommand\DG{\mathsf{DG}}
\newcommand{\cA}{\mathcal{A}}
\newcommand{\cB}{\mathcal{B}}
\newcommand{\cF}{\mathcal{F}}
\newcommand{\cG}{\mathcal{G}}
\newcommand{\cJ}{\mathcal{J}}
\newcommand{\cK}{\mathcal{K}}
\newcommand{\cL}{\mathcal{L}}
\newcommand{\cM}{\mathcal{M}}
\newcommand{\cN}{\mathcal{N}}
\newcommand{\cO}{\mathcal{O}}
\newcommand{\cS}{\mathcal{S}}
\newcommand{\cV}{\mathcal{V}}
\newcommand{\cW}{\mathcal{W}}
\newcommand{\cY}{\mathcal{Y}}
\newcommand{\cZ}{\mathcal{Z}}
\newcommand\clocCon{f} % complete local contraction morphism
\newlength\tempWidth
\newcommand\InSpaceOf[2]{
   \settowidth{\tempWidth}{$#1$}
   \phantom{#1}
   \hspace{-\tempWidth}
   {#2}}
\begin{document}
\opt{lms}{\title{Contractions and deformations}
\author{Will Donovan and Michael Wemyss}}
\opt{ams}{
\title{\textsc{Contractions and Deformations}}
\author{Will Donovan}
\address{Will Donovan, Yau Mathematical Sciences Center, Tsinghua University, Haidian District, Beijing 100084, China.}
\email{donovan@mail.tsinghua.edu.cn}
\author{Michael Wemyss}
\address{Michael Wemyss, The Mathematics and Statistics Building, University of Glasgow, University Place, Glasgow, G12 8SQ, UK.}
\email{michael.wemyss@glasgow.ac.uk}
\subjclass[2010]{Primary 14D15; Secondary 14E30, 14F05, 16E45, 16S38}
% 14D15  	Formal methods; deformations
% 14E30  	Minimal model program (Mori theory, extremal rays)
% 14F05  	Sheaves, derived categories of sheaves and related constructions
% 16E45 Differential graded algebras and applications
% 16S38  	Rings arising from non-commutative algebraic geometry
\thanks{The first author was supported by World Premier International Research Center Initiative (WPI), MEXT, Japan, and by EPSRC grant~EP/G007632/1. The second author was supported by EPSRC grant~EP/K021400/1.}}
\opt{lms}{
\classno{Primary 14D15; Secondary 14E30, 14F05, 16E45, 16S38}
\extraline{The first author was supported by World Premier International Research Center Initiative (WPI), MEXT, Japan, and by EPSRC grant~EP/G007632/1. The second author was supported by EPSRC grant~EP/K021400/1.}
}
\maketitle

\begin{abstract}
Suppose that $f$ is a projective birational morphism with at most one-dimensional fibres between $d$-dimensional varieties $X$ and $Y$, satisfying $\Rf_*\cO_X=\cO_Y$. Consider the locus $L$ in $Y$ over which $f$ is not an isomorphism.  Taking the scheme-theoretic fibre $C$ over any closed point of $L$, we construct algebras $\AB_\fib$ and $\CA$ which prorepresent the functors of commutative deformations of $C$, and noncommutative deformations of the reduced fibre, respectively. Our main theorem is that the algebras $\CA$ recover $L$, and in general the commutative deformations of neither $C$ nor the reduced fibre can do this. As the $d=3$ special case, this proves the following contraction theorem: in a neighbourhood of the point, the morphism $f$ contracts a curve without contracting a divisor if and only if the functor of noncommutative deformations of the reduced fibre is representable. 
\end{abstract}

\parindent 20pt
\parskip 0pt

\tableofcontents

\section{Introduction}

Our setting is a contraction $f\colon X\to X_{\con}$ with at most one-dimensional fibres between $d$-dimensional varieties, satisfying $\Rf_*\cO_X=\cO_{X_{\con}}$. Writing $L \subset X_{\con}$ for the locus over which $f$ is not an isomorphism, it is a fundamental problem to characterise $L$, locally around a closed point in the base.   For this, it is natural to study the deformations of the curve(s) above the point, and the question which we answer in this paper is the following.

\begin{bigquestion}
Which deformation-theoretic framework detects the non-isomorphism locus~$L$, Zariski locally around a closed point $p\in X_{\con}$?
\end{bigquestion}

The answer turns out to lie in noncommutative deformations, without assumptions on the singularities of $X$, and in arbitrary dimension.  This process associates a noncommutative algebra to each point, which should be viewed as an invariant of the contraction~$f$. When $d=3$ and the algebra is finite-dimensional, its dimension has a curve-counting interpretation~\cite{TodaGV}, but in all cases the algebra structure gives information about the neighbourhood of the point. This extra information has applications in constructing derived autoequivalences \cite{DW1,DW3,HomMMP,Kawamata}, and also in the minimal model program, allowing us to control iteration of flops and count minimal models \cite{HomMMP}, and produce the first explicit examples of Type~$E$ flops \cite{BrownWemyss,Karmazyn}. It is also conjectured that such algebras classify smooth $3$-fold flops~\cite{DW1,HT}, and furthermore it is expected that they control divisor-to-curve contractions.

\begin{figure}%[!h]
\[
\begin{array}{ccc}
\begin{array}{c}
\begin{tikzpicture}
%%%
\node at (0,0) {\begin{tikzpicture}[scale=1]
\coordinate (T) at (1.9,2);
\coordinate (TM) at (2.12-0.02,1.5-0.1);
\coordinate (BM) at (2.12-0.05,1.5+0.1);
\coordinate (B) at (2.1,1);
\draw[line width=0.5pt]\opt{colordiag}{[red]} (T) to [bend left=25] (TM);
\draw[line width=0.5pt]\opt{colordiag}{[red]} (BM) to [bend left=25] (B);

\foreach \y in {0.1,0.2,...,1}{ 
\draw[very thin]\opt{colordiag}{[blue]} ($(T)+(\y,0)+(0.02,0)$) to [bend left=25] ($(B)+(\y,0)+(0.02,0)$);
\draw[very thin]\opt{colordiag}{[blue]} ($(T)+(-\y,0)+(-0.02,0)$) to [bend left=25] ($(B)+(-\y,0)+(-0.02,0)$);;}
\draw[rounded corners=15pt,line width=0.5pt] (0.5,0) -- (1.5,0.3)-- (3.6,0) -- (4.3,1.5)-- (4,3.2) -- (2.5,2.7) -- (0.2,3) -- (-0.2,2)-- cycle;
\end{tikzpicture}};
%%%
\node at (0,-3.5) {\begin{tikzpicture}[scale=1]
\draw (1.1,0.75) -- (3.1,0.75);
\filldraw\opt{colordiag}{[red]} (2.1,0.75) circle (1pt);
\node at (2.5,0.6) {$\scriptstyle p\phantom{=L}$};
\node at (3,0.6) {$\scriptstyle \phantom{p=}L$};

\draw[rounded corners=12pt,line width=0.5pt] (0.5,0) -- (1.5,0.15)-- (3.6,0) -- (4.3,0.75)-- (4,1.6) -- (2.5,1.35) -- (0.2,1.5) -- (-0.2,0.6)-- cycle;
\end{tikzpicture}};
%%%
\draw[->] (0,-1.6) -- node[left] {$\scriptstyle f$} (0,-2.65);
%%%
\node at (-2.8,-3.6) {$X_{\con}$};
\node at (-2.8,-0.1) {$X$};
\end{tikzpicture}
\end{array}
%%%%%%%%%%%
%%%%%%%%%%%
& \qquad &
%%%%%%%%%%%
%%%%%%%%%%%
\begin{array}{c}
\begin{tikzpicture}
%%%
\node at (0,0) {\begin{tikzpicture}[scale=1]
\coordinate (T) at (1.9,2);
\coordinate (TM) at (2.12-0.02,1.5-0.1);
\coordinate (BM) at (2.12-0.05,1.5+0.1);
\coordinate (B) at (2.1,1);
\draw[line width=0.5pt]\opt{colordiag}{[red]} (T) to [bend left=25] (TM);
\draw[line width=0.5pt]\opt{colordiag}{[red]} (BM) to [bend left=25] (B);

\draw[rounded corners=15pt,line width=0.5pt] (0.5,0) -- (1.5,0.3)-- (3.6,0) -- (4.3,1.5)-- (4,3.2) -- (2.5,2.7) -- (0.2,3) -- (-0.2,2)-- cycle;
\end{tikzpicture}};
%%%
\node at (0,-3.5) {\begin{tikzpicture}[scale=1]
\filldraw\opt{colordiag}{[red]} (2.1,0.75) circle (1pt);
\node at (2.5,0.6) {$\scriptstyle p=L$};
\draw[rounded corners=12pt,line width=0.5pt] (0.5,0) -- (1.5,0.15)-- (3.6,0) -- (4.3,0.75)-- (4,1.6) -- (2.5,1.35) -- (0.2,1.5) -- (-0.2,0.6)-- cycle;
\end{tikzpicture}};
%%%
\draw[->] (0,-1.6) -- node[left] {$\scriptstyle f$} (0,-2.65);
\end{tikzpicture}
\end{array}
\end{array}
\]
\vspace{-1.5em}
\caption{Contractions of $3$-folds: divisor to curve, and curve to point.}
\label{fig.deformations}
\end{figure}
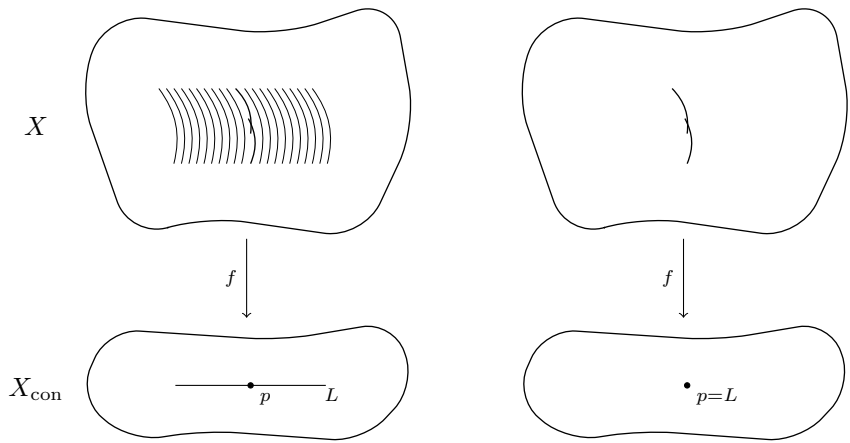
%\vspace{-0.5em}

\subsection{Summary of Results} For a closed point $p\in L$, consider the scheme-theoretic fibre $C=f^{-1}(p)$.  Set-theoretically, it is well known that $C$ is a union of $\mathbb{P}^1$s, and we denote these by $C_1,\hdots,C_n$. To specify a deformation problem requires us to provide test objects, and to say what object(s) are being deformed. The test objects  for the noncommutative deformation functor are the category $\art_n$ of artinian augmented $\K^n$-algebras, and the objects being deformed are $\{ \cO_{C_1}(-1), \hdots, \cO_{C_n}(-1) \}$.  Informally, we wish to control the deformations of the $\cO_{C_i}(-1)$, and also the mutual extensions between them. Formally, as explained in~\S\ref{section defs}, this is encoded via the Maurer--Cartan formulation as a functor 
\[
\Def^{\cJ}\colon \art_n \rightarrow \Sets.
\] 

Given a closed point $p \in X_{\con}$, it is well-known that the formal fibre over $p$ is derived equivalent to a certain noncommutative ring~$\AB$. By taking suitable factors, as in \cite[\S2]{DW1} we obtain the contraction algebra $\CA$, referring the reader to \ref{def basic algebra} for full details. Our first main result is then the following. The special case where the locus~$L$ is a single point, $d=3$, and $n=1$, was previously shown in \cite[3.1]{DW1}.

\begin{thm}[=\ref{All defs prorep}]\label{defs prorep intro } For each closed point $p \in L$,  the algebra $\CA$ (depending on~$p$) prorepresents the functor of noncommutative deformations $\Def^{\cJ}$ of the reduced fibre over~$p$.
\end{thm}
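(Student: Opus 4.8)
The plan is to reduce the statement to a known prorepresentability criterion for noncommutative deformation functors, and then verify that the contraction algebra $\CA$ is precisely the algebra produced by that criterion. First I would recall the derived equivalence, local around $p$, between the formal fibre and a noncommutative ring $\AB$, under which the sheaves $\cO_{C_i}(-1)$ correspond to the simple modules $S_1,\dots,S_n$ at the vertices of $\AB$ contracted by $f$. Since derived equivalences preserve $\Ext$-groups and their $A_\infty$ (or DG) structure, the Maurer--Cartan functor $\Def^{\cJ}$ of the reduced fibre is isomorphic to the noncommutative deformation functor of the module $\bigoplus_i S_i$ over $\AB$. Thus the geometric problem becomes an algebraic one: compute the prorepresenting object of the deformation functor of a finite collection of simple modules over $\AB$.

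For the algebraic core, I would invoke the general theory of noncommutative deformations of modules (Laudal, and in the form used in \cite{DW1}): the functor $\Def^{\cJ}$ is prorepresented by a pro-object in $\art_n$ whose dual, loosely speaking, is controlled by the $\Ext^1$ and $\Ext^2$ groups $\Ext^*_{\AB}(S_i,S_j)$ together with the higher $A_\infty$-products among them. Concretely, one builds the prorepresenting algebra as the completed path algebra of the $\Ext^1$-quiver modulo relations read off from the Massey products landing in $\Ext^2$. The key identification to make is that this algebra coincides with $\CA$: by the construction of $\CA$ in \ref{def basic algebra}, passing from $\AB$ to the contraction algebra is exactly the operation of killing the interaction with the ``uncontracted'' vertex (the one dual to the structure sheaf $\cO_{X_{\con}}$-direction), which on the level of the deformation theory is precisely the passage from all self- and mutual extensions of $S_i$ as $\AB$-modules to those that survive in the reduced-fibre problem. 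I would make this precise by comparing the defining relations of $\CA$ with the Massey-product relations term by term, using that $\AB$ has the ``NCCR-like'' homological structure (e.g. it is a symmetric algebra over the base, or has the relevant $\Ext$-vanishing) guaranteed in the setup.

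The main obstacle I anticipate is the higher-multiplicity and higher-dimensional generality: the case $n=1$, $d=3$ of \cite[3.1]{DW1} has a clean presentation, but for $n>1$ one must handle the full quiver of mutual extensions and their $A_\infty$-products, and for $d>3$ the fibre $C$ need not be a chain of reduced $\mathbb{P}^1$s embedded in a smooth or even Gorenstein ambient space, so one cannot rely on any classification of local models. I would address this by working intrinsically: establish the $A_\infty$-comparison under the derived equivalence once and for all, and then argue that the defining quotient producing $\CA$ from $\AB$ is compatible with the truncation/obstruction calculus defining the prorepresenting hull, so that no case analysis is needed. A secondary technical point is checking that $\Def^{\cJ}$ really is prorepresentable (not merely has a hull), which should follow because the objects $\cO_{C_i}(-1)$ are simple in the relevant abelian subcategory and hence have no nontrivial automorphisms deforming nontrivially — i.e.\ the functor is a deformation functor in Laudal's sense with the Schlessinger conditions upgraded to full prorepresentability.
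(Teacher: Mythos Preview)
Your overall architecture is right: transfer the problem across the derived equivalence to the deformation functor of the simple $\AB$-modules $S_1,\dots,S_n$, then identify the prorepresenting object. The paper does exactly this, carrying it out carefully as a chain of quasi-isomorphisms of DGAs (open embedding $U\hookrightarrow X$, tilting equivalence $U\simeq\Lambda$, completion and morita equivalence $\Lambda\to\AB$), each step handled by a general adjunction lemma (\ref{FLlemma}) or the Keller-type comparison of injective and locally-free endomorphism DGAs (\ref{Kellerplus}). Your sketch of this part is loose but salvageable.

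The gap is in your final identification. You propose to build the hull via the $\Ext^1$-quiver with Massey-product relations and then match this ``term by term'' with the presentation of $\CA$. That matching is exactly the hard part, and you give no mechanism for it; in general dimension and with no control on the singularities of $X$, there is no explicit presentation of $\CA$ to compare against, and the $A_\infty$-products are not computable. The paper sidesteps this entirely. The point is that once you are on $\AB$, the $S_i$ are \emph{one-dimensional} simples at the vertices $1,\dots,n$, and for such simples the \emph{naive} deformation functor (not the Massey-product description) computes tautologically: a deformation over $(\Gamma,\n)\in\art_n$ is literally a left $\AB$-module structure on $\Gamma$ compatible with the right $\Gamma$-action, i.e.\ a $\K$-algebra map $\AB\to\Gamma$ in $\alg_n$. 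Since $\Gamma\in\art_n$ has only the $n$ idempotents, any such map kills the idempotent at vertex $0$ and hence factors through $\CA=\AB/[\cO_{\mathfrak U}]$. Thus $\Def_{\AB}(\Gamma)=\Hom_{\alg_n}(\CA,\Gamma)$ by inspection; no obstruction calculus, no $A_\infty$ comparison, and no case analysis are needed. (Your remark that the quotient by $[\cO_{\mathfrak U}]$ corresponds to ``restricting which extensions survive'' is also off: the relevant $\Ext^*_{\AB}(S_i,S_j)$ already involve only vertices $1,\dots,n$; the quotient instead encodes that the vertex-$0$ idempotent must act as zero on any deformation of these simples.) The bridge from the DG functor to this naive one is supplied by Segal's theorem (\ref{Ed Lemma}), and prorepresentability (rather than a mere hull) follows because $\CA$ is complete, being a quotient of the $\m$-adically complete $\AB$.
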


It turns out that the geometry of the locus $L$ is controlled by the support of $\CA$.

\begin{thm}\label{main combo intro}
With the setup above, pick an affine open neighbourhood $\Spec R$ in $X_{\con}$, and consider $L_R:=L\cap \Spec R$. 
\begin{enumerate}
\item\textnormal{(=\ref{contraction algebra def 1}, \ref{contract1}\eqref{contract1 1})} There is an $R$-algebra $\Lambda_{\con}$ which satisfies $\Supp_R\Lambda_{\con}=L_R$.
\item\textnormal{(=\ref{ME lemma})} For each closed point $p \in L_R$, the completion of $\Lambda_{\con}$ at $p$ is morita equivalent to $\CA$.
\end{enumerate}
\end{thm}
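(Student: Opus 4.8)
The plan is to globalise the tilting description of the formal fibres into a single module-finite $R$-algebra, and then check that $\m_p$-adic completion recovers $\CA$ up to Morita equivalence. Since $f$ has fibres of dimension at most one and $\Rf_*\cO_X=\cO_{X_{\con}}$, the standard construction via perverse coherent sheaves (Bridgeland, Van den Bergh) produces a coherent sheaf of $\cO_{X_{\con}}$-algebras $\cA$, module-finite over $\cO_{X_{\con}}$, with $\Db(\coh X)\simeq\Db(\coh\cA)$, and which is locally the endomorphism algebra of a projective generator $\cP=\cO_X\oplus\cN$ of the perverse heart $\Per(X/X_{\con})$. Restricting to $\Spec R$ and taking global sections gives $\Lambda:=\cA(\Spec R)$, a module-finite $R$-algebra carrying a distinguished idempotent $e$ attached to the $\cO_X$-summand; this is the global form of the factor used to define $\CA$ in \cite[\S2]{DW1}. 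One then sets $\Lambda_{\con}:=\Lambda/\Lambda e\Lambda$, again module-finite over $R$, so that $\Supp_R\Lambda_{\con}=V(\Ann_R\Lambda_{\con})$ is closed.

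For part~(1) one computes this support prime by prime. If $\q\notin L$ then $f$ is an isomorphism over $\Spec R_\q$, so $\Lambda_\q$ is a matrix algebra over $R_\q$ in which $e$ is a nonzero idempotent; hence $\Lambda_\q e\Lambda_\q=\Lambda_\q$ and $(\Lambda_{\con})_\q=0$, giving $\Supp_R\Lambda_{\con}\subseteq L_R$. Conversely, granting part~(2), for each closed point $p\in L_R$ the completion $\widehat{\Lambda_{\con}}$ is Morita equivalent to $\CA$, which is nonzero: by Theorem~\ref{defs prorep intro } it prorepresents a deformation functor on $\art_n$ with $n\geq1$, hence surjects onto $\K^n\neq0$. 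So every closed point of $L_R$ lies in $\Supp_R\Lambda_{\con}$; as this support is closed and the closed points of $L_R$ are dense in $L_R$, we conclude $\Supp_R\Lambda_{\con}=L_R$.

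For part~(2), fix a closed point $p\in L_R$ and write $\widehat{(-)}$ for $\m_p$-adic completion. As $\Lambda$ is module-finite over $R$ and completion is flat, $\widehat\Lambda=\widehat R\otimes_R\Lambda$, the two-sided ideal $\Lambda e\Lambda$ completes to $\widehat\Lambda\,\widehat e\,\widehat\Lambda$, and hence $\widehat{\Lambda_{\con}}=\widehat\Lambda/\widehat\Lambda\,\widehat e\,\widehat\Lambda$. Flat base change for the proper morphism $f$ preserves $\Rf_*$ and hence the perverse heart, so $\widehat\Lambda$ is the endomorphism algebra of a projective generator of $\Per(\widehat X_p/\Spec\widehat R)$ on the formal fibre, with $\widehat e$ attached to its $\cO$-summand. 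By construction $\AB$ is the endomorphism algebra of another projective generator of that same heart, with its own $\cO$-idempotent $e_0$; since any two projective generators of an abelian category have Morita equivalent endomorphism algebras, via an equivalence matching $\widehat e$ with $e_0$, passing to the quotients by the corresponding two-sided ideals shows $\widehat{\Lambda_{\con}}$ is Morita equivalent to $\AB/\AB e_0\AB=\CA$. Alternatively, one could prove directly that $\widehat{\Lambda_{\con}}$ prorepresents $\Def^{\cJ}$ and invoke Theorem~\ref{defs prorep intro } together with uniqueness of prorepresenting objects.

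The main obstacle is the global input underlying part~(1): one must verify that the perverse/tilting construction of $\cA$ genuinely globalises over $\Spec R$ with \emph{no} hypothesis on the singularities of $X$, and, crucially, that over the isomorphism locus the local projective generator really lies in $\add\cO_X$, so that $\Lambda_{\con}$ is supported \emph{exactly} on $L_R$ and not on some proper closed subset. A secondary point, completion being a noncommutative operation here, is checking that it commutes both with forming the two-sided ideal $\Lambda e\Lambda$ and with the flat-base-change identification of $\widehat\Lambda$; with these in hand, the density argument for the support and the Morita comparison with $\CA$ are essentially formal.
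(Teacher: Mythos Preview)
Your argument for part~(2) is essentially the paper's: both observe that $\widehat{\Lambda}$ and $\AB$ are endomorphism rings of two projective generators of $\Per(\mathfrak{U}/\Spec\mathfrak{R})$ sharing the same $\cO$-summand, so the Morita equivalence respects the quotient by the $\cO$-idempotent.

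For part~(1) your route genuinely differs. The paper treats both inclusions uniformly at \emph{every} prime $\p$: using Proposition~\ref{Lambda and Lambda con under FBC} to identify $(\Lambda_{\con})_\p\cong(\Lambda_\p)_{\con}$, it shows via the commuting square~\eqref{comm flat Db} and Balmer's reconstruction theorem (Lemma~\ref{notequiv lemma}) that $(\Lambda_\p)_{\con}\neq0$ is equivalent to $\varphi_\p$ not being an isomorphism. Your $\subseteq$ is more elementary (a matrix-algebra computation over the isomorphism locus) and avoids Balmer entirely, which is nice. Your $\supseteq$, however, is circuitous: it passes through part~(2), invokes deformation theory to show $\CA\neq0$ (whereas $\CA\neq0$ follows directly since $\cM^*\notin\add\cO_{\mathfrak{U}}$), and then needs closed points of $L_R$ to be dense in $L_R$. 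That density is fine for the varieties in the introductory statement, but it fails in the full generality of Setup~\ref{setupZariski} (arbitrary noetherian normal $\mathbb{C}$-schemes), which is where the paper actually proves Theorem~\ref{contract1}. So your approach trades the derived-category input for a Jacobson hypothesis; the paper's prime-by-prime argument is what lets the result hold without it.
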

This theorem has two main consequences.

\begin{cor}
The dimension of $L_R$ at $p$ is $\dim_{\mathfrak{R}}\Supp_{\mathfrak{R}}\CA$, where $\mathfrak{R}$ denotes the completion of $R$ at $p$. 
\end{cor}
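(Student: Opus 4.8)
The plan is to read the corollary off Theorem~\ref{main combo intro}: since the quantity is intrinsically local at $p$, the whole content is tracking Krull dimension and module support through the completion $R\to\mathfrak{R}$ and through a Morita equivalence, neither of which disturbs them.

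First I would pass to the completion. The dimension of $L_R$ at $p$ means $\dim\mathcal{O}_{L_R,p}$, computed on the reduced structure, and this equals $\dim\widehat{\mathcal{O}_{L_R,p}}$ since completion preserves the Krull dimension of a Noetherian local ring. By Theorem~\ref{main combo intro}(1) we have $L_R=\Supp_R\Lambda_{\con}$ as closed subsets, and $\Lambda_{\con}$ is module-finite over $R$ (by its construction in~\ref{contraction algebra def 1}); consequently $\Supp_R\Lambda_{\con}=V(\Ann_R\Lambda_{\con})$, support commutes with the flat base change $R\to\mathfrak{R}$ for the finite module $\Lambda_{\con}$, and $\Lambda_{\con}\otimes_R\mathfrak{R}$ coincides with the completion $\widehat{\Lambda_{\con}}$ of $\Lambda_{\con}$ at $p$. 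Threading these facts together yields
\[
\dim_p L_R \;=\; \dim_{\mathfrak{R}}\Supp_{\mathfrak{R}}\widehat{\Lambda_{\con}}.
\]

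Next I would transport the right-hand side across the Morita equivalence of Theorem~\ref{main combo intro}(2), which identifies $\widehat{\Lambda_{\con}}$ with $\CA$ compatibly with their $\mathfrak{R}$-algebra structures. An element of $\mathfrak{R}$ annihilates $\widehat{\Lambda_{\con}}$ precisely when it annihilates every object of $\modCat\widehat{\Lambda_{\con}}$, hence (transporting along the equivalence) precisely when it annihilates every object of $\modCat\CA$, hence precisely when it annihilates $\CA$. Therefore $\Ann_{\mathfrak{R}}\widehat{\Lambda_{\con}}=\Ann_{\mathfrak{R}}\CA$, so $\Supp_{\mathfrak{R}}\widehat{\Lambda_{\con}}=\Supp_{\mathfrak{R}}\CA$, and substituting into the display gives $\dim_p L_R=\dim_{\mathfrak{R}}\Supp_{\mathfrak{R}}\CA$.

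I do not expect a genuine obstacle, since the real work is already inside Theorem~\ref{main combo intro}; the only things requiring care are bookkeeping. One must pin down the convention that ``dimension at $p$'' is the Krull dimension of the local ring of the reduced locus $L_R$ at $p$, and one must make real use of module-finiteness of $\Lambda_{\con}$ over $R$: it is needed both for the equality $\Supp_R\Lambda_{\con}=V(\Ann_R\Lambda_{\con})$ and, more importantly, to know that $\Supp_{\mathfrak{R}}(\Lambda_{\con}\otimes_R\mathfrak{R})$ is the preimage of $\Supp_R\Lambda_{\con}$ under $\Spec\mathfrak{R}\to\Spec R$, which can fail for non-finite modules. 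Finally one should verify the Morita equivalence furnished by Theorem~\ref{main combo intro}(2) is $\mathfrak{R}$-linear, so that the annihilator argument above is valid; this is automatic from how that equivalence is produced.
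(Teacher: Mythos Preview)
Your argument is correct and is precisely the derivation the introduction invites: the corollary is stated as a consequence of Theorem~\ref{main combo intro}, and you use exactly its two parts --- the identification $L_R=\Supp_R\Lambda_{\con}$ together with the Morita equivalence $\widehat{\Lambda_{\con}}\simeq\CA$ --- plus standard bookkeeping about completion and annihilators. The care you flag (module-finiteness of $\Lambda_{\con}$ over $R$, and $\mathfrak{R}$-linearity of the Morita equivalence) is the right checklist, and each point goes through as you say.

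It is worth noting that the body of the paper supplies an alternative, slightly more direct route that bypasses the Morita step entirely. Theorem~\ref{contract1}\eqref{contract1 2} applies the same support argument (Lemma~\ref{key supp lemma}) directly to the formal fibre $\mathfrak{U}\to\Spec\mathfrak{R}$, yielding $\Supp_{\mathfrak{R}}\CA=L_{\mathfrak{R}}=\{\p\in L_R\mid \p\subseteq\m\}$ in one stroke; the dimension statement then follows immediately. Your route trades that repetition of the geometric support argument for an algebraic transfer across Morita equivalence. Both are short; yours stays closer to the packaging of Theorem~\ref{main combo intro}, while the paper's \ref{contract1}\eqref{contract1 2} avoids having to verify that Morita equivalence preserves $\mathfrak{R}$-support.
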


\begin{thm}[={\ref{contraction theorem}}, contraction theorem]
When $d=3$, there is a neighbourhood of~$p$ over which $f$ does not contract a divisor if and only if $\dim_{\K}\CA<\infty$.
\end{thm}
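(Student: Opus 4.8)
The plan is to deduce the contraction theorem from Theorem~\ref{main combo intro} and its Corollary, which identify the dimension of $L_R$ at $p$ with $\dim_{\mathfrak{R}}\Supp_{\mathfrak{R}}\CA$, together with an elementary fibre-dimension argument special to $d=3$. We may assume $p\in L$, since otherwise $f$ is an isomorphism near $p$, the algebra $\CA$ is zero, and both conditions hold trivially.

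First I would reformulate finiteness of $\CA$ ring-theoretically. Being a factor of the algebra $\AB$ attached to the formal fibre over $p$, the algebra $\CA$ is module-finite over the complete Noetherian local ring $\mathfrak{R}$, whose residue field is $\K$. Hence $\dim_{\K}\CA<\infty$ if and only if $\CA$ has finite length as an $\mathfrak{R}$-module, equivalently $\Supp_{\mathfrak{R}}\CA$ is contained in the closed point, i.e. $\dim_{\mathfrak{R}}\Supp_{\mathfrak{R}}\CA\le 0$. By the Corollary this is in turn equivalent to $\dim_p L_R=0$. So it remains to prove the purely geometric statement that, when $d=3$, the locus $L_R$ is zero-dimensional at $p$ precisely when $f$ contracts no divisor over some open neighbourhood of $p$.

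For this geometric step: if $\dim_p L_R=0$ then, shrinking $\Spec R$, we may take $L_R=\{p\}$, so the non-isomorphism locus of $f^{-1}(\Spec R)\to\Spec R$ is $f^{-1}(p)$, which is one-dimensional; any contracted divisor would be a two-dimensional subvariety of this locus, so there is none. Conversely, if $\dim_p L_R\ge 1$, choose an irreducible curve $Z\subseteq L_R$ through $p$. Every closed fibre of $f$ over $Z$ is a nonempty union of $\mathbb{P}^1$s, hence one-dimensional; since $f^{-1}(Z)$ has only finitely many irreducible components while $Z$ has infinitely many closed points, some component $E_0$ satisfies $\dim(E_0\cap f^{-1}(q))\ge 1$ for infinitely many $q\in Z$. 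As that locus of $q$ is the image under the proper morphism $f|_{E_0}$ of a closed subset of $E_0$, it is closed in the curve $Z$, hence equal to $Z$; upper semicontinuity of fibre dimension together with the fibre-dimension theorem then force $\dim E_0\ge \dim Z+1=2$, while $E_0\subseteq f^{-1}(Z)\subsetneq X$ gives $\dim E_0\le 2$. Thus $E_0$ is a prime divisor of the threefold $X$ with $f(E_0)\subseteq Z$ of dimension $\le 1$, and $E_0$ meets $f^{-1}(p)$ since $E_0\to Z$ is surjective; so over any neighbourhood $U\ni p$ the dense open subset $E_0\cap f^{-1}(U)$ is a divisor contracted by $f$, and no neighbourhood of $p$ is divisor-contraction-free. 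Combining the two directions proves the theorem.

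I expect the main obstacle to be exactly the converse of the geometric step: upgrading the pointwise fact that every closed fibre over the curve $Z$ is one-dimensional to the existence of a genuinely two-dimensional component of $f^{-1}(Z)$. This is where properness (to keep the fibre-dimension locus closed) and the hypothesis $d=3$ (to force that component to be a divisor rather than something of intermediate codimension) both enter; everything else is formal once Theorem~\ref{main combo intro} and its Corollary are in hand.
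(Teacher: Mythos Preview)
Your argument is correct and follows the same route as the paper: reduce $\dim_{\K}\CA<\infty$ to $\dim_p L_R=0$ via Theorem~\ref{main combo intro} (i.e.\ \ref{contract1}) and its Corollary, then invoke the purely geometric fact that in dimension~$3$ the locus $L_R$ is zero-dimensional at $p$ if and only if $f$ contracts no divisor near $p$. The paper treats this last geometric equivalence as evident and states it in one line; you supply a careful proof of it using a fibre-dimension/semicontinuity argument, which is fine and correct.
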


The `only if' direction is easy and is known from our previous work \cite[2.13]{DW1}. The content is the `if' direction, and this requires significantly more technology.

\subsection{Comparing deformation theories}

We next show that other natural deformation functors do not control the geometry of $L$. As above, consider the scheme-theoretic fibre $C=f^{-1}(p)$ and the reduced curves $C_1,\hdots,C_n$ therein. To this data, we associate three other deformation problems. Again the details are left to \S\ref{section defs}, but the following table summarises all four functors, giving the test objects and the deformed object in each case. Here $\cart_n$ is the category of commutative artinian augmented $\K^n$-algebras.

\begin{equation*}
	\begin{tabular}{l*6c}
\toprule
\multirow{2}{*}{\bf Deformation problem}&&\multirow{2}{*}{\bf Functor}&&{\bf Test}&&{\bf Object(s)}\\
{\bf }&&{\bf }&&{\bf objects}&&{\bf deformed}\\
\midrule
Classical scheme-theoretic && $\cDef^{\cO_C}$&& $\cart_1$&& $\cO_C$ \\
Noncommutative scheme-theoretic && $\phantom{c}\Def^{\cO_C}$ && $\phantom{\mathsf{C}}\art_1$&& $\cO_C$\\
\cmidrule(l){1-7}
Commutative multi-pointed  && $\cDef^{\cJ}$&& $\cart_{n}$&& $\oplus_i \cO_{C_i}(-1)$ \\
Noncommutative multi-pointed  && $\phantom{c}\Def^{\cJ}$&& $\phantom{\mathsf{C}}\art_n$&& $\{\cO_{C_i}(-1)\}_i$  \\
\bottomrule\\
\end{tabular}
\end{equation*}

The following result drops out of our general construction.  It is quite surprising, since it says that noncommutative deformations of the scheme-theoretic fibre give nothing in addition to the classical ones.

\begin{thm}[=\ref{oCdefmain}]
The functors $\cDef^{\cO_C}$ and $\Def^{\cO_C}$ are prorepresented by the same object $\CAR$.
\end{thm}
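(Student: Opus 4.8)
The plan is to show that the pro‑representing hull of the noncommutative functor $\Def^{\cO_C}$ is already commutative, and that it is then forced to be $\CAR$. Since $\cart_1$ sits inside $\art_1$, the functor $\cDef^{\cO_C}$ is nothing but the restriction of $\Def^{\cO_C}$ to commutative test objects, so the hull of $\cDef^{\cO_C}$ is the abelianisation of the hull of $\Def^{\cO_C}$; hence once the latter is known to be commutative it equals that abelianisation, which by the construction of \S\ref{section defs} is $\CAR$. By the Maurer--Cartan formalism of \S\ref{section defs} this hull is built from $\RHom_X(\cO_C,\cO_C)$, and its commutativity amounts to the relevant obstruction maps factoring through the symmetric part of $\Ext^2_X(\cO_C,\cO_C)$; the work is to read this off from the geometry of the fibre.

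To do so I would pass to an affine model $\Spec S\subseteq X_{\con}$ around $p$, with maximal ideal $\n=(x_1,\dots,x_k)$, where $k\ge d=\dim X\ge 2$ since $S$ is singular (or at worst has dimension $\geq 2$) at $p$; thus $\cO_C=\cO_X/\n\cO_X$ is a \emph{cyclic} module over the \emph{commutative} sheaf of rings $\cO_X$. Given $R\in\art_1$ and a deformation $\mathcal F$ of $\cO_C$ over $R$, nilpotence of $\mathfrak m_R$ and Nakayama force $\mathcal F\cong(\cO_X\otimes_\K R)/\mathcal J$ with $\mathcal J$ a right ideal generated, after adjusting the cyclic generator, by elements $x_i-a_i$ with $a_i\in\mathfrak m_R$. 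Flatness of $\mathcal F$ over $R$ forces the relations that the $x_i$ satisfy inside the commutative ring $\cO_X$ (both the Koszul relations and the extra syzygies coming from the singularity of $S$ at $p$) to persist in $\mathcal F$; tracking these relations shows that the $a_i$ must generate a \emph{commutative} Artinian local subalgebra $R'\subseteq R$, and that $\mathcal J$, hence $\mathcal F$, is base-changed from $\cO_X\otimes_\K R'$. Thus every noncommutative deformation of $\cO_C$ factors through a commutative test object, which is exactly the commutativity of the hull; globalising the affine picture is harmless because $\Hom_X(\cO_C,\cO_C)=\K$ makes the local models essentially canonical, so the various $R'$ glue.

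Conceptually the statement just says that deforming the scheme-theoretic fibre $C=f^{-1}(p)$ is the same as deforming the point $p$ on the \emph{commutative} scheme $X_{\con}$ --- this can be made precise via $\RHom_X(\cO_C,\cO_C)$ and the projection formula $\Rf_*\cO_X=\cO_{X_{\con}}$ --- and a point of a commutative scheme carries no extra noncommutative deformations. The main obstacle is not this principle but its execution: making precise, with flatness controlled throughout, exactly which relations among the $x_i$ must lift and why they pin the $a_i$ down to a commutative subalgebra (the inequality $d\ge 2$, so that $\n$ genuinely has several generators, and the properness of $C$, are both used here), and matching the isomorphism-of-deformations equivalence on the two sides so that one obtains literally the same hull rather than merely a factorisation. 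A further subtlety is that $f$ need not be flat, so $\cO_C$ is not $\mathbf L f^*\cO_p$ in general and the clean ``deform the point downstairs'' argument only serves as a guide; the affine cyclic-module analysis above is what I would actually carry out.
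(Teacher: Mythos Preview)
Your approach differs from the paper's, and while the underlying intuition is sound, the execution has two real gaps.

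The paper does not analyse deformations of $\cO_C$ directly. Instead it tilts with the \emph{dual} bundle $\cV^*$: under the resulting equivalence $\Db(\coh U)\simeq\Db(\mod\Lambda^{\op})$, the sheaf $\cO_C$ corresponds to a \emph{simple} module $T_0'$ (this is \cite[3.5.7]{VdB1d}, and is precisely the reason for dualising). Repeating the DG comparison argument of \ref{All defs prorep} verbatim reduces $\Def_X^{\cO_C}$ to naive deformations of the corresponding simple $S_0'$ over $\BB=\AB^{\op}$, which is prorepresented by $\BB_{\fib}=\End_{\mathfrak U}(\cO_{\mathfrak U})/[\cM]$. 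Commutativity is now a one-liner: $\End_{\mathfrak U}(\cO_{\mathfrak U})\cong\mathfrak R$, so its quotient $\BB_{\fib}\cong\AB_{\fib}=\CAR$ is commutative. This simultaneously \emph{identifies} the hull and proves its commutativity, with no flatness bookkeeping or gluing.

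Your local argument, by contrast, does not go through as written. Locally on $X$ the ring $A/\n A$ is a local ring of the \emph{curve} $C$, not a one-dimensional vector space, so a flat deformation over $R$ is free of $R$-rank $>1$ and the generators of $\mathcal J$ cannot be taken of the special form $x_i\otimes 1 - 1\otimes a_i$ with $a_i\in\mathfrak m_R$: in general $x_i\cdot\bar 1=\sum_j g_{ij}\cdot\bar 1\cdot r_{ij}$ with nontrivial $g_{ij}\in\cO_X$, and no single $a_i\in R$ emerges. You correctly sense that the global condition $\Hom_X(\cO_C,\cO_C)=\K$ is what should rescue this, but making it precise --- showing that $\End_{\cO_X\text{-}R}(\mathcal F)$ is a copy of $R$ into which the left $x_i$-actions land as commuting elements generating the image of the classifying map --- is exactly the hard step, and you do not carry it out. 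Separately, even granting commutativity of the hull you never identify it with $\AB_{\fib}$: nothing in \S\ref{section defs} computes the commutative hull of $\cDef^{\cO_C}$, so your appeal ``which by the construction of \S\ref{section defs} is $\CAR$'' assumes part of what \ref{oCdefmain} asserts.
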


In general, however, the prorepresenting objects for the functors $\cDef^{\cO_C}$, $\cDef^{\cJ}$ and $\Def^{\cJ}$ are different. We prove the following.

\begin{prop}[=\ref{trihedral1}, \ref{trihedral2}]
Neither $\cDef^{\cO_C}$ nor $\cDef^{\cJ}$ detect the dimension of the non-isomorphism locus $L$.
\end{prop}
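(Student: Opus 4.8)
The plan is to exhibit explicit contractions whose fibres are simple enough that the relevant deformation algebras can be computed directly, and then to check that the resulting prorepresenting objects no longer see $\dim_p L$. Recall from the set-up of \S\ref{section defs} that $\cDef^{\cO_C}$, and hence also $\Def^{\cO_C}$ by \ref{oCdefmain}, is prorepresented by $\CAR$, while the commutative functor $\cDef^{\cJ}$ is the restriction of $\Def^{\cJ}$ to commutative test objects, so (as one checks within the Maurer--Cartan framework) it is prorepresented by the abelianisation $\CAab$ of the contraction algebra $\CA$. Since the corollary to \ref{main combo intro} identifies $\dim_p L$ with $\dim_{\mathfrak R}\Supp_{\mathfrak R}\CA$, it suffices to produce contractions for which $\dim_{\mathfrak R}\Supp_{\mathfrak R}\CAR$, respectively $\dim_{\mathfrak R}\Supp_{\mathfrak R}\CAab$, is not equal to $\dim_p L$.

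For $\cDef^{\cJ}$ I would take a divisorial contraction $f\colon X\to X_{\con}$ with one-dimensional fibres and $\dim_p L=1$, chosen so that $\CA$ has a presentation by a quiver with relations (as in \ref{def basic algebra} and \cite{DW1}) in which the relations responsible for the one-parameter family of simple modules genuinely fail to commute. Abelianising then forces those relations to cut out a fat point, so that $\CAab$ is finite-dimensional over $\K$ and $\dim_{\mathfrak R}\Supp_{\mathfrak R}\CAab = 0 \neq 1 = \dim_p L$, establishing that $\cDef^{\cJ}$ cannot recover the dimension of $L$. The point to verify is that such an $f$ exists realising the prescribed $\CA$, for which I would combine a small explicit model with the known realisation results for contraction algebras.

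For $\cDef^{\cO_C}$ I would instead analyse a flopping contraction, so that $\dim_p L = 0$, whose scheme-theoretic fibre $C$ supports a single $\mathbb{P}^1$ with unbalanced normal bundle (of type $(0,-2)$ or $(1,-3)$ inside a smooth $X$), so that $\Ext^1_X(\cO_C,\cO_C)\neq 0$ and $\cDef^{\cO_C}$ is non-trivial. One then computes $\CAR$ from the $A_\infty$-structure on $\Ext^\bullet_X(\cO_C,\cO_C)$; the assertion to be proved is that the prorepresenting hull is infinite-dimensional over $\K$, for instance $\CAR\cong\K[[t]]$, so that $\dim_{\mathfrak R}\Supp_{\mathfrak R}\CAR = 1 \neq 0$. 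Geometrically this reflects the fact that, although the reduced curve is rigid, the thickenings of $C$ — equivalently the complexes deforming $\cO_C$ — move inside $X$ in a one-parameter family which $L$ simply does not record.

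The main obstacle is this last computation. Distinguishing $\CAR\cong\K[[t]]$ from a finite truncation $\K[t]/t^N$ requires pinning down the higher $A_\infty$-products on $\Ext^\bullet_X(\cO_C,\cO_C)$ for the chosen local model, equivalently controlling precisely how far the relevant thickenings of $C$ deform; the numerical data $\dim\Ext^1_X(\cO_C,\cO_C)$ and $\dim\Ext^2_X(\cO_C,\cO_C)$ alone do not decide this. By contrast, once a quiver presentation of $\CA$ is available the abelianisation computation underlying the $\cDef^{\cJ}$ example is essentially formal, so the genuine work lies in the explicit $A_\infty$-model together with the initial choice of examples that make both failures visible.
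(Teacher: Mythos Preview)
Your treatment of $\cDef^{\cJ}$ is correct and matches the paper: in \ref{trihedral2} the paper contracts the middle curve in $G\text{-Hilb}$ for $G=A_4$, obtains $\CA\cong\K\langle\!\langle u,v\rangle\!\rangle/\mathrm{cl}(u^2,v^2)$, and observes that the abelianisation $\K[[u,v]]/(u^2,v^2)$ is finite-dimensional while $\dim_p L=1$. This is exactly the mechanism you describe.

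Your approach to $\cDef^{\cO_C}$, however, cannot work. You propose a flopping contraction (so $\dim_p L=0$) with $\CAR$ infinite-dimensional. But this is ruled out by \ref{dimgivesdim}: if $\dim_{\K}\CA<\infty$ then $\dim_{\K}\CAR<\infty$. For a flopping contraction the Contraction Theorem gives $\dim_{\K}\CA<\infty$, so $\CAR$ is forced to be finite-dimensional as well. In other words, $\CAR$ can only fail by being \emph{too small}, never too large; the direction of failure you are looking for does not exist. Your own identification of the ``main obstacle'' as the $A_\infty$ computation distinguishing $\K[[t]]$ from $\K[t]/t^N$ is therefore moot: the answer is always the finite one.

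The paper's \ref{trihedral1} instead runs in the opposite direction. It contracts the two outer $(-1,-1)$-curves in $G\text{-Hilb}$ for $G=A_4$ to obtain $f\colon\mathfrak{U}\to\Spec\mathfrak{R}$ with a single curve above the origin. One reads off from the McKay quiver that there is no loop at the vertex for $\mathfrak{R}$, so $\CAR=\AB_{\fib}\cong\K$; yet $\CA=\End_{\mathfrak{R}}(M_2)/[\mathfrak{R}]$ surjects onto the infinite-dimensional algebra $\K\langle\!\langle u,v\rangle\!\rangle/\mathrm{cl}(u^2,v^2)$, so $f$ contracts a divisor and $\dim_p L=1$. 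Thus $\CAR$ is finite while $L$ is one-dimensional, which is the only way the failure can occur.
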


This is clear when the fibre above $p$ has more than one irreducible curve, but is much more surprising when there is only a single irreducible curve in the fibres.  We produce in~\ref{trihedral2} a contraction, sketched below, with a one-dimensional non-isomorphism locus~$L$ in which the central point $0$ is $cD_4$, and all other points are $cA_1$.  The commutative deformations of all reduced fibres except the central one are infinite dimensional, whereas the noncommutative deformations are always infinite dimensional.
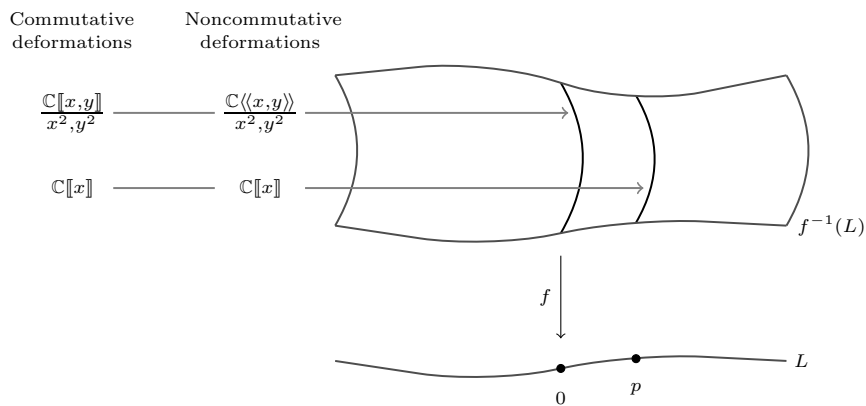
\begin{figure}[!h]
\def\shift{-0.3}
\[
\begin{tikzpicture}
%The two curves
\draw [bend right,line width=0.75pt]\opt{colordiag}{[red]} (3,-2.1) to (3,-0.1); 
\draw [bend right,line width=0.75pt]\opt{colordiag}{[red]} (4,-1.96) to (4,-0.27); 
% The exceptional locus
\draw[color=black!70,rounded corners=25pt,line width=0.75pt] (0,0)-- (2,0.2) -- (4,-0.4) -- (6,0) ;
\draw[color=black!70,rounded corners=25pt,line width=0.75pt] (0,-2) -- (2,-2.3) -- (4,-1.9)  -- (6,-2);
\draw[color=black!70,bend left,line width=0.75pt] (0,0) to (0,-2);
\draw[color=black!70,bend left,line width=0.75pt] (6,0) to (6,-2);
% The nonisomorphism locus
\draw[color=black!70,rounded corners=25pt,line width=0.75pt] (0,-3.5+\shift) -- (2,-3.8+\shift) -- (4,-3.4+\shift)  -- (6,-3.5+\shift);
% The contraction
\draw[->] (3,-2.4) -- node[left] {$\scriptstyle f$} (3,-3.5);
%The two points
\filldraw\opt{colordiag}{[red]} (3,-3.6+\shift) circle (1.5pt);
\filldraw\opt{colordiag}{[red]} (4,-3.47+\shift) circle (1.5pt);
%The labels on points
\node at (3,-4+\shift) {$\scriptstyle 0$};
\node at (4,-3.87+\shift) {$\scriptstyle p$};
%Comm and NC deformations
\node at (-1,-0.5) {$\frac{\K\langle\!\langle x,y\rangle\!\rangle}{x^2,y^2}$};
\node at (-1,-1.5) {$\scriptstyle \K[\![ x]\!]$};
\node at (-3.5,-0.5) {$\frac{\K[\![ x,y]\!]}{x^2,y^2}$};
\node at (-3.5,-1.5) {$\scriptstyle \K[\![ x]\!]$};
\node[align=center, text width=2cm] at (-1,0.75) {\scriptsize Noncommutative};
\node[align=center, text width=2cm]  at (-3.5,0.75) {\scriptsize Commutative};
\node[align=center, text width=2cm] at (-1,0.45) {\scriptsize deformations};
\node[align=center, text width=2cm]  at (-3.5,0.45) {\scriptsize deformations};
%%arrows into picture
\draw[->,line width=0.75,black!50] (-0.4,-0.5) to (3.1,-0.5);
\draw[->,line width=0.75,black!50] (-0.4,-1.5) to (4.1,-1.5);
\draw[line width=0.75,black!50] (-2.95,-0.5) to (-1.6,-0.5);
\draw[line width=0.75,black!50] (-2.95,-1.5) to (-1.6,-1.5);
% labels for non-iso locus
\node at (6.6,-2) {$\scriptstyle f^{-1}(L)$};
\node at (6.2,-3.5+\shift) {$\scriptstyle L$};
\end{tikzpicture}
\]
\vspace{-3em}
\caption{Commutative versus noncommutative deformations of $C^{\redu}.$}
\label{fig.comm vs nc}
\end{figure}

Thus, it follows that $\Def^{\cJ}$ is the functor that controls the contractibility of curves.  For the convenience of the reader, we summarise the above together with the main results of \cite{DW3} in the following table.  To discuss autoequivalences, we further assume that $f$ is a flopping contraction, and $X$ is $\mathds{Q}$-factorial with only Gorenstein terminal singularities.
\vspace{1em}
\begin{equation*}
\begin{tabular}{l*6c}
\toprule
\multirow{2}{*}{\bf Deformation problem}&&\multirow{2}{*}{\bf Functor}&&{\bf Detects}&&{\bf Corresponds to}\\
{\bf }&&{\bf }&&{\bf divisor?}&&{\bf autoequivalence?}\\\midrule
Classical && $\cDef^{\cO_C}$ && No&& Yes\\
\cmidrule(l){1-7}
Commutative multi-pointed && $\cDef^{\cJ}$&&No && No\\
Noncommutative multi-pointed &&  $\phantom{c}\Def^{\cJ}$&& Yes&& Yes\\
\bottomrule\\
\end{tabular}
\end{equation*}

Our method to prove the above theorems involves noncommutative deformation theory associated to DGAs.  By passing through various derived equivalences and embeddings, we reduce the geometric deformation problem into an easier problem about simultaneously  deforming a collection of simple modules on a complete local ring obtained by tilting.

\subsection{Structure of the Paper}
In \S\ref{naivesection}, we recall the naive noncommutative deformation theory developed by Laudal~\cite{Laudal}, and subsequently Eriksen~\cite{Eriksen}. We then describe DG deformation theory in \S\ref{DGdefsection}, noting that it is equivalent by work of Segal~\cite{Segal} and Efimov--Lunts--Orlov~\cite{ELO1}, and establish tools for comparing DG deformations which will arise in our construction. 

In \S\ref{Section3} we give the geometric setup, before proving the prorepresentability results for noncommutative deformations of the reduced fibre. The key observation, building on \S\ref{section defs}, is that this deformation functor is isomorphic to a DG deformation functor associated to a specific locally free resolution.  In \S\ref{section5} we use these results to prove \ref{defs prorep intro }, \ref{main combo intro}, and the Contraction Theorem.

In \S\ref{section:schemefibre} we prove the prorepresentability results for both commutative and noncommutative deformations of the scheme-theoretic fibre, and show that they are prorepresented by the same object. The fact that $\CA$ and $\CAR$ are obtained as factors of a common ring~$\AB$ then allows us to relate the different deformation functors, and we do this in \S\ref{subsect:comparison}.  We conclude in \S\ref{examples section} by giving examples which illustrate the necessity of noncommutative deformations in the above theorems.

\subsection{Conventions}\label{section conventions}  Throughout we work over the field of complex numbers $\K$. Unqualified uses of the word `module' refer to left modules, and $\mod A$ denotes the category of finitely generated left $A$-modules.  For two $\K$-algebras $A$ and $B$, an $A$-$B$ bimodule is the same thing as an $A\otimes_{\K}B^{\op}$-module. We use the functorial convention for composing arrows, so $f\cdot g$ means $f$ then $g$.  In particular, naturally this makes $M\in \mod A$ into an $\End_R(M)$-module.  We remark that these conventions are opposite to those in \cite{DW1, DW3}, but we do this to match the conventions in \cite{Eriksen, ELO1}. Similarly, for quivers, DG category morphisms, and matrix multiplication, we write $ab$ for $a$ then $b$. We reserve the notation $f \circ g$ for the composition $g$~then~$f$.

For an abelian category $\cA$, given $a\in\cA$, we let $\add (a)$ denote all possible summands of finite sums of $a$. Given furthermore $b\in\cA$ where $a$ is a summand of $b$, we write $[a]$ for the two-sided ideal of $\End(b)$ consisting of all morphisms that factor through a member of $\add (a)$.

\medskip
\noindent
\textbf{Acknowledgements.}  The authors would like to thank Jon Pridham, Ed Segal, Olaf Schn\"urer  and Yukinobu Toda for helpful discussions relating to this work, and anonymous referees for useful comments and suggestions.

\section{Naive and DG Deformations}
\label{section defs}

This section is mainly a review of known material, and is used to set notation.  Noncommutative deformations of modules were introduced by Laudal \cite{Laudal}, and we review these naive deformation functors in \S\ref{naivesection} below.  In our geometric setting later,  this naive definition is necessary in order to establish the prorepresenting object in \ref{All defs prorep}. 

However, it is cumbersome to compare two or more naive deformation functors, as is demonstrated in the proofs of \cite[2.6, 2.8, 2.19]{DW1}, and for this the setting of multi-pointed DG deformation functors is much better suited.  We review this theory in \S\ref{DGdefsection}, being a slight generalisation of the setting of \cite{Segal,ELO1}, before in \S\ref{axiomsection} proving some general DG deformation functor results.

\subsection{Naive Deformations of Modules}\label{naivesection}
From the geometric motivation of the introduction, where we want to deform $n$ reduced curves in a fibre simultaneously, the test objects for the naive deformation functors are objects of the category $\art_n$, defined as follows.

\begin{defin}\label{defin.n-pointed_non-comm_K-alg} An \emph{$n$-pointed $\K$-algebra} $\Gamma$ is an associative $\K$-algebra, together with $\K$-algebra morphisms $p\colon \Gamma \to \K^n$ and $i: \K^n \to \Gamma$ such that $ip = \Id$.  A morphism of $n$-pointed $\K$-algebras $\psi\colon (\Gamma,p,i)\to(\Gamma',p',i')$ is an algebra homomorphism $\psi\colon\Gamma\to\Gamma'$ such that
\[
\begin{tikzpicture}
\node (A) at (0,0) {$\K^n$};
\node (B1) at (1.5,0.75) {$\Gamma$};
\node (B2) at (1.5,-0.75) {$\Gamma'$};
\node (C) at (3,0) {$\K^n$};
\draw[->] (A) -- node[above] {$\scriptstyle i$} (B1);
\draw[->] (A) -- node[below] {$\scriptstyle i'$} (B2);
\draw[->] (B1) -- node[above] {$\scriptstyle p$} (C);
\draw[->] (B2) -- node[below] {$\scriptstyle p'$} (C);
\draw[->] (B1) -- node[right] {$\scriptstyle \psi$}(B2);
\end{tikzpicture}
\]
commutes.  We denote the category of $n$-pointed $\K$-algebras by $\alg_n$.  We denote the full subcategory consisting of those objects that are commutative rings by $\calg_n$.

We write $\art_n$ for the full subcategory of $\alg_n$ consisting of objects $(\Gamma,p,i)$ for which $\dim_{\K}\Gamma<\infty$ and the augmentation ideal $\n:=\Ker p$ is nilpotent.  We write $\cart_n$ for the full subcategory of $\art_n$ consisting of those objects that are commutative rings.
\end{defin}

\begin{remark}\label{full simples}
If $\Gamma$ is an associative ring, then by \cite[1.3]{Eriksen2}, there exists $p$, $i$ such that $(\Gamma,p,i)\in\art_n$ if and only if $\Gamma$ is an artinian $\mathbb{C}$-algebra with precisely $n$ simple modules (up to isomorphism), each of them one-dimensional over $\mathbb{C}$.
\end{remark}

\begin{notation}\label{remark.matrix_viewpoint}If $(\Gamma,p,i)\in\art_n$, the structure morphisms $p$ and $i$ allow us to lift the canonical idempotents $\{\idem{1}, \ldots, \idem{n}\}$ of $\K^n$ to $\Gamma$. We will write 
\[
\Gamma_{ij} := \idem{i} \Gamma \idem{j},
\]
and consider $\Gamma$ as a matrix ring $(\Gamma_{ij})$ under standard matrix multiplication. Accordingly, a left $\Gamma$-module $M$ may be described in terms of its vector of summands $M_i := e_i M $, and a right $\Gamma$-module $N$ can be described by its summands $N_i := N e_i $.
\end{notation}

\begin{defin}\label{def:def functor}
Given a $\K$-algebra $\Lambda$, choose a family $\cS=\{S_1,\hdots,S_n\}$ of objects in $\Mod\Lambda$. The deformation functor
\[
\Def_{\Lambda}^{\cS}\colon\art_n\to\Sets
\]
is defined by sending
\[
(\Gamma,\n)\mapsto
\left. \left \{ (M,\underline{\delta})
\left|\begin{array}{l}M\in \Mod\Lambda\otimes_{\mathbb{C}}\Gamma^{\op}\\ 
M \cong (S_i\otimes_{\mathbb{C}}\Gamma_{ij})\mbox{ as right }\Gamma\mbox{-modules}\\ \underline{\delta}=(\delta_i), \mbox{where each }\delta_i\colon M\otimes_\Gamma (\Gamma /\n)e_i\xrightarrow{\sim} S_i\end{array}\right. \right\} \middle/ \sim \right.
\]
where 
\begin{enumerate}
\item $(S_i\otimes_{\mathbb{C}}\Gamma_{ij})$ refers to the $\mathbb{C}$-vector space $
(S_i\otimes_{\mathbb{C}}\Gamma_{ij}):=\bigoplus_{i,j=1}^n(S_i\otimes_{\mathbb{C}}\Gamma_{ij})$,
with the natural right $\Gamma$-module structure coming from the multiplication in $\Gamma$.

\item $(M,\underline{\delta})\sim (N,\underline{\delta}')$ iff there exists an isomorphism $\tau\colon M\to N$ of bimodules such that  the following diagram commutes for all $i=1,\hdots,n$.
\[
\begin{tikzpicture}
\node (a1) at (0,0) {$M\otimes_\Gamma (\Gamma/\n) e_i$};
\node (a2) at (3,0) {$N\otimes_\Gamma (\Gamma/\n) e_i$};
\node (b) at (1.5,-1) {$S_i$};
\draw[->] (a1) -- node[above] {$\scriptstyle \tau \otimes 1 $} (a2);
\draw[->] (a1) -- node[gap] {$\scriptstyle \delta_i$} (b);
\draw[->] (a2) -- node[gap] {$\scriptstyle \delta'_i$} (b);
\end{tikzpicture}
\]
\vspace{-2em}
\end{enumerate}
\end{defin}

An important problem in deformation theory is determining when deformation functors are prorepresentable, and also effectively describing the prorepresenting object.  We briefly recall these notions, mainly to fix notation.

For any $(\Gamma,p,i)\in\alg_n$, setting $I(\Gamma):=\Ker p$ we consider the $I(\Gamma)$-adic completion $\widehat{\Gamma}$ of $\Gamma$, defined by
\[
\widehat{\Gamma}:=\varprojlim \Gamma/I(\Gamma)^n.
\]
The canonical morphism $\psi_\Gamma\colon \Gamma\to\widehat{\Gamma}$ belongs to $\alg_n$.  We say that $\Gamma\in\alg_n$ is \emph{complete} if $\psi_\Gamma$ is an isomorphism.  The pro-category $\proart_n$ is then defined to be the full subcategory of $\alg_n$ consisting of those objects $(\Gamma,p,i)$ for which $\Gamma$ is $I(\Gamma)$-adically complete, and $\Gamma/I(\Gamma)^r\in\art_n$ for all $r\geq 1$.  It is clear that $\art_n\subseteq \proart_n$.

For a deformation functor $F\colon\art_n\to\Sets$, recall that 
\begin{enumerate}
\item $F$ is called \emph{prorepresentable} if $F\cong\Hom_{\proart_n}(\Gamma,-)|_{\art_n}$ for some $\Gamma\in\proart_n$.
\item $F$ is called \emph{representable} if $F\cong\Hom_{\art_n}(\Gamma,-)$ for some $\Gamma\in\art_n$.
\end{enumerate}
It is clear that if $F$ is prorepresented by $\Gamma\in\proart_n$, then $F$ is representable if and only if $\dim_{\mathbb{C}}\Gamma<\infty$.

It is well-known that the functor in \ref{def:def functor} is prorepresentable if $\Ext^t_\Lambda(\bigoplus S_i,\bigoplus S_i)$ is finite dimensional for $t=1,2$ \cite{Laudal}: we will not use this fact below, however, instead preferring to establish the prorepresenting object in a much more direct way.

\subsection{DG Deformations}\label{DGdefsection}
With our conventions as in the introduction, recall that a DG~category is a graded category $\mathsf{A}$ whose morphism spaces are endowed with a differential~$d$, i.e.\ homogeneous maps of degree one satisfying $d^2=0$, such that 
\[
d(gf)=g(df)+(-1)^t(dg)f
\]
for all $g\in\Hom_{\mathsf{A}}(a,b)$ and all $f\in\Hom_{\mathsf{A}}(b,c)_t$ for $t \in \mathbb{Z}$.  In this paper we will be interested in the category $\DG_n$, which has as objects those DG categories with precisely $n$ objects.  If $\mathsf{A},\mathsf{B}\in\DG_n$, recall that a DG functor $F\colon\mathsf{A}\to\mathsf{B}$ is a graded functor  such that $F(df)=d(Ff)$ for all morphisms $f$.  A \emph{quasi-isomorphism} $F\colon\mathsf{A}\to\mathsf{B}$ is a DG functor inducing a bijection on objects, and quasi-isomorphisms $\Hom_{\mathsf{A}}(a_1,a_2)\to\Hom_{\mathsf{B}}(Fa_1,Fa_2)$ for all $a_1,a_2\in\mathsf{A}$. Two categories $\mathsf{A},\mathsf{B}\in\DG_n$ are called \emph{quasi-isomorphic} if they are connected through a finite, non-directed chain of quasi-isomorphisms.

\begin{notation}
Suppose that $\mathsf{A}\in\DG_n$, and $(\Gamma,\n)\in\art_n$, and recall from \ref{remark.matrix_viewpoint} that $\n_{ij}:=e_i\n\hspace{0.1em}e_j$.  Define $\mathsf{A}\uotimes\n:=\bigoplus_{i,j=1}^n(\mathsf{A}\uotimes\n)_{ij}$, where
\[
(\mathsf{A}\uotimes\n)_{ij}:=\Hom_{\mathsf{A}}(i,j)\otimes_{\K} \n_{ij}.
\]
Observe that $\mathsf{A}\uotimes\n$ has the natural structure of an object in $\DG_n$ (but with no units) with differential $d(a\otimes x):=d(a)\otimes x$. Thus we may consider $\mathsf{A}\uotimes\n$ as a DGLA, with bracket
\[
\quad[a\otimes x,b\otimes y]:=ab\otimes xy-(-1)^{\deg(a)\deg(b)}ba\otimes yx
\] 
for homogeneous $a,b\in\mathsf{A}$.
\end{notation}
Since $(\Gamma,\n)\in\art_n$, by definition $\n^r=0$ for some $r\geq 1$, hence $\mathsf{A}\uotimes\n$ is a nilpotent DGLA.  This being the case, we can consider the standard Maurer--Cartan formulation to obtain a deformation functor. Here $\mathsf{A}^t\underline{\otimes}\,\n$ denotes a homogeneous piece of $\mathsf{A}\,\underline{\otimes}\,\n$.  
\begin{defin}
Given $(\mathsf{A},d)\in\DG_n$,  the associated DG deformation functor
\[
\Def^{\mathsf{A}}\colon\art_n\to\Sets
\]
is defined by sending
\[
(\Gamma,\n)\mapsto
\left. \left \{ 
\xi\in \mathsf{A}^1\underline{\otimes}\,\n
\left|
\,
d(\xi)+\frac{1}{2}[\xi,\xi]=0
\right. \right\} \middle/ \sim \right.
\]
where as usual the equivalence relation $\sim$ is induced by the gauge action.  Explicitly, two elements $\xi_1,\xi_2\in \mathsf{A}^1\uotimes\n$ are said to be gauge equivalent if there exists $x\in\mathsf{A}^0\uotimes\n$ such that
\[
\xi_2=e^x*\xi_1:=\xi_1+\sum_{j=0}^{\infty}\frac{([x,-])^j}{(j+1)!}([x,\xi_1]-d(x)).
\]
\end{defin}

The following is a mild extension of the well-known $n=1$ case.  The proof is very similar to the known $n=1$ proofs (see e.g.\ \cite[8.1]{ELO1}, \cite[3.2]{Manetti}, \cite[2.4]{GM}), so we do not give it here.
\begin{thm}\label{Qis give def iso}
Suppose that $\mathsf{A},\mathsf{B}\in\DG_n$ are quasi-isomorphic. Then the deformation functors $\Def^\mathsf{A}$ and $\Def^\mathsf{B}$ are isomorphic.
\end{thm}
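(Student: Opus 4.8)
The plan is to follow the classical strategy for invariance of the Maurer--Cartan/deformation functor under quasi-isomorphism of DGLAs, carefully tracking the $n$-pointed (matrix) structure so that everything respects the idempotent decomposition. The key point is that a quasi-equivalence $F\colon\mathsf{A}\to\mathsf{B}$ in $\DG_n$ induces, for every $(\Gamma,\n)\in\art_n$, a morphism of nilpotent DGLAs $F\uotimes\n\colon\mathsf{A}\uotimes\n\to\mathsf{B}\uotimes\n$ which is a quasi-isomorphism of complexes: indeed, since $F$ induces quasi-isomorphisms $\Hom_{\mathsf{A}}(i,j)\to\Hom_{\mathsf{B}}(Fi,Fj)$ and $F$ is bijective on objects (so $Fi$ may be identified with $i$), tensoring over $\K$ with the finite-dimensional space $\n_{ij}$ preserves quasi-isomorphism, and taking the finite direct sum over $i,j$ does too. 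One checks directly from the formulas in the Notation box that $F\uotimes\n$ is a DGLA morphism: it commutes with $d$ because $F(df)=d(Ff)$, and with the bracket because $F$ is a graded functor, hence multiplicative, so $F(ab)=F(a)F(b)$.

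Next I would invoke the standard homotopy-invariance theorem for the Deligne/Maurer--Cartan functor: a quasi-isomorphism of nilpotent DGLAs over a field of characteristic zero induces a bijection on $\mathrm{MC}$ modulo gauge equivalence. This is precisely the content cited in the excerpt as the ``well-known $n=1$ case'' (e.g.\ \cite[8.1]{ELO1}, \cite[3.2]{Manetti}, \cite[2.4]{GM}), and the proof there makes no use of the DGLA being the endomorphism DGLA of a single object---it uses only nilpotency and the characteristic-zero hypothesis, together with an obstruction-theoretic or explicit homotopy-transfer argument to lift and descend Maurer--Cartan elements and gauges. Applying this with the DGLA $\mathsf{A}\uotimes\n$ (which is nilpotent because $\n^r=0$ for some $r$, as noted) gives a bijection $\Def^{\mathsf{A}}(\Gamma,\n)\xrightarrow{\sim}\Def^{\mathsf{B}}(\Gamma,\n)$ for each test object. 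Finally I would check naturality: for a morphism $(\Gamma,\n)\to(\Gamma',\n')$ in $\art_n$ the induced map $\mathsf{A}\uotimes\n\to\mathsf{A}\uotimes\n'$ is a DGLA morphism compatible with $F$, and the homotopy-invariance bijections are natural in the DGLA, so the squares commute and we obtain an isomorphism of functors $\Def^{\mathsf{A}}\cong\Def^{\mathsf{B}}$. Since ``quasi-isomorphic'' means connected by a finite zig-zag of quasi-equivalences, it suffices to treat a single quasi-equivalence, which is what the statement asserts.

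The main obstacle---and the reason the excerpt declines to write the proof---is purely bookkeeping: verifying that the $n$-pointed structure is preserved at every stage, i.e.\ that $F\uotimes\n$ genuinely lands in $\DG_n$ ``without units'' as described, that the bracket and differential formulas are intertwined correctly including signs, and that the gauge action $e^x\ast(-)$ with $x\in\mathsf{A}^0\uotimes\n$ is transported correctly. None of this is conceptually hard once one accepts that tensoring a quasi-isomorphism with a finite-dimensional vector space and taking finite direct sums both preserve quasi-isomorphism; the only genuinely substantive input is the scalar-characteristic-zero homotopy invariance of $\mathrm{MC}$, which is black-boxed from the cited references. Hence the ``proof'' amounts to: (i) produce the DGLA quasi-isomorphism $\mathsf{A}\uotimes\n\to\mathsf{B}\uotimes\n$; (ii) quote the $n=1$-style theorem verbatim for nilpotent DGLAs; (iii) observe naturality in $\Gamma$; (iv) reduce the general zig-zag to a single quasi-equivalence. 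I would present this in that order, spending essentially all the detailed effort on step (i) and dispatching (ii)--(iv) by reference.
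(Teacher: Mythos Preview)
Your proposal is correct and matches the paper's approach exactly: the paper does not actually write out a proof, stating only that it is a mild extension of the well-known $n=1$ case and citing \cite[8.1]{ELO1}, \cite[3.2]{Manetti}, \cite[2.4]{GM}. Your sketch---produce the nilpotent DGLA quasi-isomorphism $F\uotimes\n$ for each test object and invoke the classical invariance theorem---is precisely the extension the paper has in mind, so there is nothing further to compare.
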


\subsection{Basic Results}\label{axiomsection}
Controlling noncommutative deformations of curves in the next section requires the following two preliminary results, and a corollary.  All are well-known in the case $n=1$.

To fix notation, suppose that $\cA$ is an abelian category with enough injectives and that $x,y$ are two chain complexes with objects from $\cA$.  Let $\Hom^{\DG}_{\cA}(x,y)$ be the DG~$\K$-module with
\[
\Hom^{\DG}_{\cA}(x,y)_t:=\{ (f_s)_{s\in\mathbb{Z}}\mid f_s\colon x_s\to y_{s+t}\}
\]
and differential $\delta\colon f\mapsto fd_y-(-1)^{\deg(f)}d_xf$.

Now choose a family of objects $a_1,\hdots,a_n\in\cA$, an injective resolution $0\to a_i\to I^i_\bullet$ for each $a_i$, and set $I:=\bigoplus_{i=1}^n I^i_\bullet$.   From this, we form $(\End_\cA^{\DG}(I),\delta)$, considered naturally as an object of $\DG_n$.

\begin{lemma}\label{FLlemma}
Suppose that $\cA, \cB$ are abelian categories, and $F\colon \cA\to\cB$ is an additive functor with left adjoint $L$.  Choose a family of objects $a_1,\hdots,a_n\in\cA$, and for each choose an injective resolution $0\to a_i\to I^i_\bullet$.  If
\begin{enumerate}
\item\label{FLlemma 1} $L$ is exact,
\item\label{FLlemma 2} $\mathbf{R}^{t}F(a_i)=0$ for all $t>0$ and all $i=1,\hdots,n$,
\item\label{FLlemma 3} The counit $L \circ F \to \Id$ is an isomorphism on each object $a_i$,
\end{enumerate}
then $\End^{\DG}_{\cA}(\bigoplus I^i_\bullet)$ and $\End^{\DG}_{\cB}(\bigoplus FI^i_\bullet)$ are quasi-isomorphic in $\DG_n$.
\end{lemma}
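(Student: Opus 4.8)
The plan is to build an explicit chain of quasi-isomorphisms in $\DG_n$ connecting $\End^{\DG}_{\cA}(\bigoplus I^i_\bullet)$ to $\End^{\DG}_{\cB}(\bigoplus FI^i_\bullet)$, with $F$ itself inducing the comparison map. First I would note that, since $L$ is exact (hypothesis~\eqref{FLlemma 1}), the functor $F$ sends injectives to injectives: indeed $\Hom_{\cB}(-,FE)\cong\Hom_{\cA}(L-,E)$ is exact when $E$ is injective. Hence each complex $FI^i_\bullet$ is a complex of injectives in $\cB$, and by hypothesis~\eqref{FLlemma 2} the augmented complex $0\to F(a_i)\to FI^i_\bullet$ is exact, so $FI^i_\bullet$ is an injective resolution of $F(a_i)$ in $\cB$.

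Next I would exploit the adjunction to get a DG functor on endomorphism DGAs. Applying $F$ gives, for each pair of indices, a $\K$-linear map
\[
\Hom^{\DG}_{\cA}(I^i_\bullet, I^j_\bullet)\longrightarrow \Hom^{\DG}_{\cB}(FI^i_\bullet, FI^j_\bullet),
\]
compatible with composition and with the differentials $\delta$ (the latter because $F$ is additive and commutes with the connecting maps $d$ of the complexes), so this assembles into a morphism $\Phi\colon \End^{\DG}_{\cA}(\bigoplus I^i_\bullet)\to \End^{\DG}_{\cB}(\bigoplus FI^i_\bullet)$ in $\DG_n$. To see it is a quasi-isomorphism, I would identify the cohomology of each side: since $I^i_\bullet$ is an injective resolution of $a_i$ and $FI^i_\bullet$ an injective resolution of $F(a_i)$,
\[
H^t\Hom^{\DG}_{\cA}(I^i_\bullet,I^j_\bullet)\cong\Ext^t_{\cA}(a_i,a_j),\qquad
H^t\Hom^{\DG}_{\cB}(FI^i_\bullet,FI^j_\bullet)\cong\Ext^t_{\cB}(F(a_i),F(a_j)).
\]
So it remains to check that $\Phi$ realises the natural map $\Ext^t_{\cA}(a_i,a_j)\to\Ext^t_{\cB}(F(a_i),F(a_j))$ as an isomorphism. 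Here I would use the adjunction together with hypothesis~\eqref{FLlemma 3}: by adjointness $\Ext^t_{\cB}(F(a_i),F(a_j))\cong\Ext^t_{\cA}(L F(a_i), a_j)$ (using that $L$ is exact, so it computes $\mathbf{L}L=L$ on the resolution and preserves Ext-groups via the derived adjunction, or more concretely that $\Hom_{\cA}(L-,I^j_\bullet)$ computes the right-hand Ext since $I^j_\bullet$ is injective), and then the counit isomorphism $LF(a_i)\xrightarrow{\sim} a_i$ identifies this with $\Ext^t_{\cA}(a_i,a_j)$; a diagram chase shows the composite is exactly the map induced by $\Phi$. Therefore $\Phi$ is a quasi-isomorphism, and the two DG categories are quasi-isomorphic in $\DG_n$.

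The main obstacle I anticipate is the bookkeeping in the last step: verifying carefully that the map on cohomology induced by the DG functor $\Phi$ coincides, under the two identifications with Ext-groups, with the composite adjunction-plus-counit isomorphism. This is a compatibility-of-adjunctions argument and is routine in the $n=1$ case, but one must be careful that $F$ preserving injectives really does follow from exactness of $L$ (rather than needing $F$ exact), and that the sign conventions for $\delta$ in $\Hom^{\DG}$ are respected by $F$; the multi-object ($n>1$) bookkeeping of idempotents is purely formal and adds nothing essential. Everything else is a direct translation of the standard $n=1$ proof.
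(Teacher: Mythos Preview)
Your proposal is correct and follows essentially the same route as the paper's proof: define the DG functor induced by $F$, use exactness of $L$ to see that $F$ preserves injectives, use hypothesis~\eqref{FLlemma 2} to conclude that $FI^i_\bullet$ is an injective resolution of $Fa_i$, identify the cohomologies of both sides with the relevant $\Ext$-groups, and then use the adjunction together with the counit isomorphism from hypothesis~\eqref{FLlemma 3} to verify that the induced map on $\Ext$ is an isomorphism. The paper makes the last step explicit via a commutative ladder diagram with rows $\Hom_{\cA}(a_i,I^j_\bullet)\cong\Hom_{\cA}(LFa_i,I^j_\bullet)\cong\Hom_{\cB}(Fa_i,FI^j_\bullet)$, which is exactly the compatibility you flag as the main bookkeeping point.
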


\begin{proof} Consider the obvious map
\begin{eqnarray*}
F\colon \End^{\DG}_{\cA}\left(\bigoplus I^i_\bullet\right)\to\End^{\DG}_{\cB}\left(\bigoplus FI^i_\bullet\right).\label{the obvious q}
\end{eqnarray*}
To show that this is a quasi-isomorphism it suffices, by adjunction, to show that 
\begin{eqnarray}
\End^{\DG}_{\cA}\left(\bigoplus I^i_\bullet\right)\to\Hom^{\DG}_{\cA}\left(\bigoplus LFI^i_\bullet, \bigoplus I^i_\bullet\right)\label{the obvious q 2}
\end{eqnarray}
given by composing with the counit morphisms $L F I^i_\bullet \to I^i_\bullet$ is a quasi-isomorphism. Consider the following commutative square, given by applying the counit $L \circ F \to \Id$ to the resolution of $a_i$.
\[
\begin{tikzpicture}[xscale=1.1]
\node (a1) at (2,0) {$L F a_i$};
\node (a2) at (4,0) {$a_i$};
\node (b1) at (2,-1) {$L F I^i_\bullet$};
\node (b2) at (4,-1) {$I^i_\bullet$};
\draw[->] (a1)--node[above]{$\scriptstyle \sim$}(a2);
\draw[->] (b1)--(b2);
\draw[->] (a1)--node[sloped,left,anchor=south]{$\scriptstyle\sim$}(b1);
\draw[->] (a2)--node[sloped,left,anchor=south]{$\scriptstyle\sim$}(b2);
\end{tikzpicture}
\]
To see that the left-hand arrow is a quasi-isomorphism, note that condition \eqref{FLlemma 1} implies that $F$ preserves injective objects, and \eqref{FLlemma 2} implies that $F$ preserves the injective resolutions of the $a_i$, hence $0\to Fa_i\to FI^i_\bullet$ are injective resolutions: the claim then follows because $L$ is exact. The top arrow is a quasi-isomorphism by condition \eqref{FLlemma 3}, and so we deduce that the bottom arrow is a quasi-isomorphism. It follows that \eqref{the obvious q 2} is a quasi-isomorphism because~$I^i_\bullet$ is $h$-injective, giving the lemma.
\end{proof}

\begin{remark}We are grateful to an anonymous referee for suggesting an improvement to a previous argument for this lemma.\end{remark}

Keeping the notation as above, for each of the objects $a_1,\hdots,a_n\in\cA$, choose a left resolution $Q^\bullet_i\to a_i\to 0$, where for now the $Q$'s are arbitrary.   Set $Q:=\bigoplus_{i=1}^n Q^\bullet_i$, and consider
\[
\Delta:=
\begin{pmatrix}
\End^{\DG}_{\cA}(Q[1])&\Hom^{\DG}_{\cA}(Q[1],I)\\[1mm]
0& \End^{\DG}_{\cA}(I)
\end{pmatrix}.
\]
This can be viewed as an object in $\DG_n$ in the obvious way: the homomorphism space between object $i$ and object $j$ is 
\[
\begin{pmatrix}
\End^{\DG}_{\cA}(Q_i^\bullet[1])&\Hom^{\DG}_{\cA}(Q_i^\bullet[1],I^j_\bullet)\\[1mm]
0& \End^{\DG}_{\cA}(I^j_\bullet)
\end{pmatrix},
\]
with differential as in \cite[\S8]{ELO1}. There are natural projections $p_1\colon\Delta\to\End_{\cA}^{\DG}(Q[1])$ and $p_2\colon\Delta\to\End_{\cA}^{\DG}(I)$ in $\DG_n$, and splicing the left resolutions with the right resolutions gives an exact complex $Q[1]\to I$.

\begin{prop}[Keller]\label{Kellerplus}
Suppose that $\cA$ is an abelian category, and choose a family of objects $a_1,\hdots,a_n\in\cA$.  With notation as above, 
\begin{enumerate}
\item\label{Kellerplus 1} The projection $p_1$ is a quasi-isomorphism in $\DG_n$.
\item\label{Kellerplus 2} If $\Hom^{\DG}_\cA(Q[1],Q[1]\to I)$ is exact, then $p_2$ is a quasi-isomorphism in $\DG_n$.
\end{enumerate}
In particular, provided that $\Hom^{\DG}_\cA(Q[1],Q[1]\to I)$ is exact, $\End_{\cA}^{\DG}(Q)$ and $\End_{\cA}^{\DG}(I)$ are quasi-isomorphic in $\DG_n$.
\end{prop}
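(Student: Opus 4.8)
The plan is to establish the two quasi-isomorphism claims \eqref{Kellerplus 1} and \eqref{Kellerplus 2} about the projections out of the triangular DG category $\Delta$, and then deduce the final ``In particular'' statement by concatenating them through $\Delta$. Since quasi-isomorphism in $\DG_n$ is defined via finite non-directed chains, it suffices to produce $\End^{\DG}_\cA(Q)\xleftarrow{\ \sim\ }\Delta'\xrightarrow{\ \sim\ }\End^{\DG}_\cA(I)$ for an appropriate intermediary; the natural candidate is $\Delta$ itself, except that $p_1$ lands on $\End^{\DG}_\cA(Q[1])$ rather than $\End^{\DG}_\cA(Q)$. The shift $[1]$ only reindexes and re-signs the complexes, so $\End^{\DG}_\cA(Q[1])\cong\End^{\DG}_\cA(Q)$ as objects of $\DG_n$; I would note this identification once at the start and work with $Q[1]$ throughout.

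For part \eqref{Kellerplus 1}, I would argue that the inclusion $\End^{\DG}_\cA(I)\hookrightarrow\Delta$ as the bottom-right corner is a section of $p_2$, and that its cokernel — the DG $\K$-module $\Hom^{\DG}_\cA(Q[1],I)$ sitting in the upper-right corner, with the twisted differential — is acyclic. The point is that splicing the left resolutions $Q^\bullet_i\to a_i$ with the injective resolutions $a_i\to I^i_\bullet$ produces an exact complex $Q[1]\to I$ (as the excerpt already records), and this exact complex has injective terms in the $I$-part, so $\Hom^{\DG}_\cA(Q_i^\bullet[1],-)$ applied to the whole thing, or rather the relevant comparison, shows the off-diagonal Hom-complex is exact because one is mapping from a complex into an acyclic complex of injectives. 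Concretely, $\Hom^{\DG}_\cA(Q_i^\bullet[1],I^j_\bullet)$ computes $\RHom$ from (a shift of) the left resolution $Q_i^\bullet$ — which resolves $a_i$ — into $I^j_\bullet$, and since $Q[1]\to I$ is exact with $I$ a bounded-below complex of injectives, the connecting maps force acyclicity of the upper-right entry. Hence $p_2$ induces isomorphisms on all Hom-cohomologies, giving a quasi-isomorphism in $\DG_n$.

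Part \eqref{Kellerplus 2} is the genuine content and the place I expect the main obstacle. Here the inclusion of $\End^{\DG}_\cA(Q[1])$ as the top-left corner is a section of $p_1$, and I must show the complementary piece — now the \emph{block column} consisting of the upper-right Hom-complex together with the lower-right $\End^{\DG}_\cA(I)$, with the full twisted differential of $\Delta$ — is acyclic. This is precisely where the hypothesis that $\Hom^{\DG}_\cA(Q[1],\,Q[1]\to I)$ is exact enters: that total complex $\Hom^{\DG}_\cA(Q[1],Q[1]\to I)$ is, up to the standard cone/mapping-cone bookkeeping, exactly the column of $\Delta$ that must vanish in cohomology after quotienting by the $\End^{\DG}_\cA(Q[1])$ summand. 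So the strategy is to identify the kernel of $p_1$ with (a shift of) $\Hom^{\DG}_\cA(Q[1],\mathrm{Cone}(Q[1]\to I))$, recognize that $\mathrm{Cone}(Q[1]\to I)$ is the acyclic spliced complex, and conclude acyclicity from the hypothesis. The subtlety is purely the sign/degree conventions in the triangular DGA differential of \cite[\S8]{ELO1} and making sure the cone identification is on the nose; for $n>1$ there is the extra bookkeeping that all of this must hold blockwise in each $\Hom_{\DG_n}(i,j)$, but since the construction is diagonal in the object labels this is routine once $n=1$ is settled. Finally, the ``In particular'' clause follows: under the stated exactness both $p_1$ and $p_2$ are quasi-isomorphisms in $\DG_n$, so $\End^{\DG}_\cA(Q)\cong\End^{\DG}_\cA(Q[1])$ and $\End^{\DG}_\cA(I)$ are connected by the chain $\End^{\DG}_\cA(Q[1])\xleftarrow{p_1}\Delta\xrightarrow{p_2}\End^{\DG}_\cA(I)$, hence quasi-isomorphic.
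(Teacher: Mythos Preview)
Your overall strategy---show that each projection has acyclic kernel, then chain through $\Delta$---is exactly the paper's approach, and your final paragraph is fine. But your execution of part~\eqref{Kellerplus 1} contains a genuine gap.

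You claim that the complement of $\End^{\DG}_\cA(I)$ in $\Delta$ is just the upper-right block $\Hom^{\DG}_\cA(Q[1],I)$, and that this is acyclic ``because one is mapping from a complex into an acyclic complex of injectives''. Both points fail. First, the kernel of $p_2$ is not merely the off-diagonal block: as a graded piece it also contains the top-left block. Second, and more seriously, $\Hom^{\DG}_\cA(Q[1],I)$ with its intrinsic differential is \emph{not} acyclic---since $Q$ resolves the $a_i$ from the left and $I$ resolves them from the right, this Hom-complex computes $\Ext^*_\cA(a_i,a_j)$, which certainly need not vanish. The complex $I$ is not acyclic, so your sentence does not apply.

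The paper's argument is different in exactly this place: with the twisted differential from \cite[\S8]{ELO1}, one identifies $\Ker p_2=\Hom^{\DG}_\cA(I,\,Q[1]\to I)$, i.e.\ maps \emph{from} the bounded-below complex of injectives $I$ \emph{into} the acyclic spliced complex. This is acyclic because $I$ is $h$-injective. So the roles of $Q[1]$ and $I$ in your acyclicity argument for \eqref{Kellerplus 1} are reversed; the $h$-injectivity of $I$ is what does the work, not any property of $Q[1]$.

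For part~\eqref{Kellerplus 2} you end up invoking the correct hypothesis, matching the paper's identification $\Ker p_1=\Hom^{\DG}_\cA(Q[1],\,Q[1]\to I)$. But your description of $\Ker p_1$ as the ``right column'' $\Hom^{\DG}_\cA(Q[1],I)\oplus\End^{\DG}_\cA(I)$ does not match the graded pieces of $\Hom^{\DG}_\cA(Q[1],\,Q[1]\to I)$, so the identification you sketch cannot be ``on the nose'' as you suggest; the bookkeeping with the ELO differential needs to be redone carefully.
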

\begin{proof}
As above, the complex $Q[1]\to I$ is exact.\\
(1) By construction of the upper triangular $\Delta$, as in \cite[\S8]{ELO1} 
\[
\Ker p_1=\Hom^{\DG}_\cA(Q[1]\to I,I).
\]   
Since  $I$ is $h$-injective, it follows that $\Ker p_1$ is exact, and thus $p_1$ is a quasi-isomorphism.\\
(2) Again by construction, $\Ker p_2=\Hom^{\DG}_\cA(Q[1],Q[1]\to I)$, and so if by assumption this is exact, $p_2$ is a quasi-isomorphism.\\
The final statement follows from (1) and (2), since clearly $\End^{\DG}_\cA(Q)\cong\End^{\DG}_\cA(Q[1])$.
\end{proof}

The following is now a direct consequence of \cite[2.8]{Segal}, and says that the naive deformations and the DG deformations are the same when we deform distinct simples.
\begin{cor}\label{Ed Lemma}
Suppose that $\Lambda$ is a $\K$-algebra, and that $\cS=\{S_1,\hdots,S_n\}\subseteq\Mod\Lambda$ are simple and distinct.  Choose injective resolutions $0\to S_i\to I^i_\bullet$ and set $\mathsf{A}:=\End^{\DG}_{\Lambda}(I)\in\DG_n$ as above. Then $ \Def^{\mathsf{A}}\cong \Def^{\cS}_{\Lambda}$.
\end{cor}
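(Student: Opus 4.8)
The plan is to reduce \ref{Ed Lemma} to the cited \cite[2.8]{Segal} by identifying the naive multi-pointed deformation functor $\Def^{\cS}_\Lambda$ with a DG deformation functor attached to a suitable resolution, and then moving that resolution to the injective one via the results of \S\ref{axiomsection}. First I would recall that \cite[2.8]{Segal} (in the multi-pointed formulation) establishes that for a collection of distinct simple modules $\cS=\{S_1,\hdots,S_n\}$, the naive Laudal-style functor $\Def^{\cS}_\Lambda$ is isomorphic to the Maurer--Cartan/DG deformation functor $\Def^{\mathsf{B}}$ attached to $\mathsf{B}:=\End^{\DG}_\Lambda(P)$, where $P=\bigoplus_i P^\bullet_i$ is a choice of projective resolutions $P^\bullet_i\to S_i\to 0$. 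The point of distinctness is that $\Hom_\Lambda(S_i,S_j)=\delta_{ij}\K$ and $\Ext^{<0}$ vanishes, so that $\mathsf{B}$ has cohomology concentrated in nonnegative degrees with $H^0=\K^n$, which is exactly the situation in which the naive deformation groupoid is equivalent to the DG one; Segal's argument (a comparison of the gauge-theoretic and the ``lift the module'' descriptions) carries over verbatim to the $\K^n$-linear setting since nothing there was special to $n=1$.

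Next I would pass from the projective resolution to the injective resolution. Applying \ref{Kellerplus} with $\cA=\Mod\Lambda$, the objects $a_i=S_i$, the left resolutions $Q^\bullet_i=P^\bullet_i$ projective, and the injective resolutions $0\to S_i\to I^i_\bullet$: the spliced complex $P[1]\to I$ is exact (it is an acyclic complex built from the resolution of $\bigoplus S_i$), and moreover $\Hom^{\DG}_\cA(P[1],\,P[1]\to I)$ is exact because each $P^\bullet_i$ is a complex of projectives and $P[1]\to I$ is acyclic (so $\Hom^{\DG}$ out of a bounded-above complex of projectives into an acyclic complex is acyclic). Hence \ref{Kellerplus} gives that $\End^{\DG}_\Lambda(P)=\End^{\DG}_\cA(Q)$ and $\End^{\DG}_\Lambda(I)=\End^{\DG}_\cA(I)$ are quasi-isomorphic in $\DG_n$. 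By \ref{Qis give def iso}, quasi-isomorphic objects of $\DG_n$ have isomorphic DG deformation functors, so $\Def^{\mathsf{B}}\cong\Def^{\mathsf{A}}$ where $\mathsf{A}=\End^{\DG}_\Lambda(I)$ as in the statement. Composing the two isomorphisms yields $\Def^{\mathsf{A}}\cong\Def^{\mathsf{B}}\cong\Def^{\cS}_\Lambda$, which is what we want.

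The main obstacle, and the step deserving the most care, is the first one: verifying that \cite[2.8]{Segal} genuinely applies in the multi-pointed ($n>1$) setting and with the distinctness hypothesis doing exactly the work it needs to do. One must check that distinctness of the $S_i$ forces $\Hom_\Lambda(S_i,S_j)=\delta_{ij}\K$, so that the augmentation $\mathsf{B}\to\K^n$ is well-defined and $\mathsf{B}\uotimes\n$ is a nilpotent DGLA with the correct $\art_n$-functoriality; and one must confirm that Segal's equivalence between the ``deform the module with a chosen trivialisation'' description and the Maurer--Cartan description respects the matrix/vector-of-summands bookkeeping of \ref{remark.matrix_viewpoint} — i.e.\ that the deformed object $M\cong(S_i\otimes_\K\Gamma_{ij})$ of \ref{def:def functor} corresponds on the DG side to a twisted differential $\xi\in\mathsf{B}^1\uotimes\n$ and vice versa, with the two equivalence relations ($\sim$ of \ref{def:def functor} versus gauge equivalence) matching up. Once that dictionary is in place the rest is formal, via \ref{Kellerplus} and \ref{Qis give def iso}.
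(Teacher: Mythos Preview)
Your proposal is correct and follows essentially the same route as the paper: invoke Segal's identification of $\Def^{\cS}_\Lambda$ with the DG deformation functor of a projective (bar) resolution, then use \ref{Kellerplus} (via $h$-projectivity of projective resolutions) to pass to the injective resolution DGA, and conclude by \ref{Qis give def iso}. The only cosmetic difference is that the paper names the bar resolution explicitly where you speak of a general projective resolution.
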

\begin{proof}
Under the assumption that the $S_i$ are distinct simples, Segal \cite[2.6, 2.8]{Segal} shows that $\Def^{\cS}_{\Lambda}$ is isomorphic to the DG deformation functor associated to the bar resolutions of the simples.  Since projective resolutions are $h$-projective, the conditions of \ref{Kellerplus}\eqref{Kellerplus 2} are satisfied, so the bar resolution DGA is quasi-isomorphic in $\DG_n$ to $\mathsf{A}$.  The result then follows from \ref{Qis give def iso}.
\end{proof}

\section{Deformations of Reduced Fibres}\label{Section3}
In this section, in the setting of contractions with at most one-dimensional fibres, we show that the functor of simultaneous noncommutative deformations of the reduced fibre is prorepresented by a naturally defined algebra $\CA$.  This algebra is a factor of one obtained by tilting, and this extra control over the prorepresenting object allows us in \S\ref{section5} to prove that noncommutative deformations recover the contracted locus.

\subsection{Setup}
This subsection fixes notation.  Throughout the paper, we will refer to the three setups in \ref{setupglobal}, \ref{setupZariski} and \ref{setupcomplete} below.

\begin{setup}\label{setupglobal}
(Global) Suppose that $f\colon X\to X_{\con}$ is a projective birational morphism between noetherian integral normal $\mathbb{C}$-schemes, with $\Rf_*\cO_X=\cO_{X_{\con}}$, such that the fibres are at most one-dimensional.  Throughout, we write $L$ for the locus of (not necessarily closed) points of $X_{\con}$ above which $f$ is not an isomorphism.
\end{setup}

We make no assumptions on the singularities of $X$.  Next, for any closed point $p\in L$, we pick an affine neighbourhood $\Spec R$ in $X_{\con}$ containing $p$, and after base change consider the following Zariski local setup.

\begin{setup}\label{setupZariski}
(Zariski local) Suppose that $f\colon U\to\Spec R$  is a projective birational morphism between noetherian integral normal $\mathbb{C}$-schemes, with $\Rf_*\cO_U=\cO_R$, such that the fibres are at most one-dimensional. 
\end{setup}
In dimension $3$, an easy example is the blowup of $\mathbb{A}^3$ at the ideal $(x,y)$, but the setup also includes arbitrary flips and flops of multiple curves, as well as divisorial contractions to curves.  We make no assumptions on the singularities of $U$.  

With the assumptions in \ref{setupZariski}, it is well-known \cite[3.2.8]{VdB1d} that there is a bundle $ \cV:=\cO_U\oplus\cN$  inducing a derived equivalence
\begin{eqnarray}
\begin{array}{c}
\begin{tikzpicture}
\node (a1) at (0,0) {$\phantom{.}\Db(\coh U)$};
\node (a2) at (5,0) {$\Db(\mod \End_U(\cV)).$};
\draw[->] (a1) -- node[above] {$\scriptstyle\RHom_U(\cV,-)$} node [below] {$\scriptstyle\sim$} (a2);
\end{tikzpicture}
\end{array}\label{derived equivalence}
\end{eqnarray}
Throughout we set
\[
\Lambda:=\End_U(\cV) = \End_U(\cO_U\oplus\cN),
\]
and recall from the conventions in \S\ref{section conventions} that if $\cF,\cG\in\coh U$ where $\cF$ is a summand of~$\cG$, then we define the ideal $[\cF]$ to be the two-sided ideal of $\End_U(\cG)$ consisting of all morphisms factoring through $\add \cF$.   

The following is similar to \cite[2.12]{DW1}, but the definition is now more subtle since in general $\End_{U}(\cV)\ncong \End_R(f_*\cV)$, whereas there is such an isomorphism in the setting of~\cite{DW1}.  The upshot is that we must work on $U$, and not $\Spec R$.

\begin{defin}\label{contraction algebra def 1}
With notation as above, we define the \emph{contraction algebra} associated to~$\Lambda$ to be $\Lambda_{\con}:=\End_U(\cO_U\oplus\cN)/[\cO_U]$.
\end{defin}

The algebra $\Lambda_{\con}$ defined above depends on $\Lambda$ and thus the choice of derived equivalence \eqref{derived equivalence}, but this is accounted for in the formal fibre setting below, after passing through morita equivalences.  Also, we remark that since $\cN\notin\add \cO_U$ (else $f$ is an isomorphism, e.g.\ by \ref{notequiv lemma}), the contraction algebra $\Lambda_{\con}$ is necessarily non-zero.

To obtain well-defined invariants that do not depend on choices, and also to relate to the deformation theory in the following \S\ref{Sect def contraction}, we now pass to the formal fibre.
\begin{setup}\label{setupcomplete}
(Complete local) Suppose that $f\colon \mathfrak{U}\to\Spec \mathfrak{R}$  is a projective birational morphism between noetherian integral normal $\mathbb{C}$-schemes,  with $\Rf_*\cO_{\mathfrak{U}}=\cO_{\mathfrak{R}}$, where $\mathfrak{R}$~is complete local and  the fibres of $f$ are at most one-dimensional. 
\end{setup}

After passing to this formal fibre, the Zariski local derived equivalence has a particularly nice form, which we now briefly review.  Fix a closed point $\m \in L$, and write $\mathfrak{R}:=\widehat{R}$.   The above derived equivalence \eqref{derived equivalence} induces an equivalence
\[
\begin{array}{c}
\begin{tikzpicture}
\node (a1) at (0,0) {$\Db(\coh\mathfrak{U})$};
\node (a2) at (4,0) {$ \Db(\mod\widehat{\Lambda}),$};
\draw[->] (a1) -- node[above] {$\scriptstyle\RHom_{\mathfrak{U}}(\widehat{\cV},-)$} node [below] {$\scriptstyle\sim$} (a2);
\end{tikzpicture}
\end{array}
\]
and this can be described much more explicitly.  We let $C=\pi^{-1}(\m)$ where $\m$ is the unique closed point of $\Spec \mathfrak{R}$, then giving $C$ the reduced scheme structure, we can write $C^{\redu}=\bigcup _{i=1}^nC_i$ with each $C_i\cong\mathbb{P}^1$.  Let $\cL_i$ denote the line bundle on $\mathfrak{U}$ such that $\cL_i\cdot C_j=\delta_{ij}$. Recall that the multiplicity of $C_i$ in $C$ is given by the length of the local ring of $C$ at the generic point of $C_i$, considered as a module over the local ring of $\mathfrak{U}$ at the same point. If this is one, set $\cM_i:=\cL_i$, else define $\cM_i$ to be given by the maximal extension
\[
0\to\cO_{\mathfrak{U}}^{\oplus(r-1)}\to\cM_i\to\cL_i\to 0
\]
associated to a minimal set of $r-1$ generators of $H^1(\mathfrak{U},\cL_i^{*})$ as an $\mathfrak{R}$-module \cite[3.5.4]{VdB1d}.  Then $\cO_{\mathfrak{U}}\oplus \bigoplus_{i=1}^n\cM_i^*$
is a tilting bundle on $\mathfrak{U}$ \cite[3.5.5]{VdB1d}.   By  \cite[3.5.5]{VdB1d} we can write
\[
\cO_{\mathfrak{U}}\oplus\widehat{\cN}\cong \cO_{\mathfrak{U}}^{\oplus a_0}\oplus \bigoplus_{i=1}^n (\cM_i^*)^{\oplus a_i}
\]
for some $a_i\in\mathbb{N}$ and so consequently $\widehat{\Lambda}\cong\End_{\mathfrak{U}}(\cO_{\mathfrak{U}}^{\oplus a_0}\oplus \bigoplus_{i=1}^n (\cM_i^*)^{\oplus a_i})$.

\begin{defin}\label{def basic algebra}
We write $\cM^*:=\bigoplus_{i=1}^n \cM_i^*$ and define 
\[
\AB:=\End_{\mathfrak{U}}(\cO_{\mathfrak{U}}\oplus \cM^*),
\]
which is the basic algebra morita equivalent to $\widehat{\Lambda}$.  From this, we define the {\em contraction algebra associated to $\clocCon$} to be 
\[
\CA:=\End_{\mathfrak{U}}(\cO_{\mathfrak{U}}\oplus \cM^*)/[\cO_{\mathfrak{U}}]\cong \End_{\mathfrak{U}}(\cM^*)/[\cO_{\mathfrak{U}}],
\]
and we define the {\em fibre algebra associated to $\clocCon$} to be 
\[
\AB_{\fib}:=\End_{\mathfrak{U}}(\cO_{\mathfrak{U}}\oplus \cM^*)/[\cM^*]\cong \End_{\mathfrak{U}}(\cO_{\mathfrak{U}})/[\cM^*].
\]
\end{defin}

\begin{remark}\label{Afibcomm}
Since $\End_{\mathfrak{U}}(\cO_{\mathfrak{U}})\cong\mathfrak{R}$, it follows from the definition that $\AB_{\fib}$ is always commutative, although $\CA$ need not be.
\end{remark}

To establish various homological properties we will need to pass through morita equivalences between the algebras $\AB$ and $\widehat{\Lambda}$, and between the algebras $\CA$ and~$\widehat{\Lambda}_{\con}$.  Here we describe these equivalences, mainly to fix notation. In analogy with \cite[\S5.3]{DW1} throughout we write
\[
\cY:=\cO_{\mathfrak{U}}\oplus \bigoplus \cM_i^*, \qquad \cZ:=\cO_{\mathfrak{U}}^{\oplus a_0}\oplus \bigoplus (\cM_i^*)^{\oplus a_i},
\]
so that $\AB=\End_{\mathfrak{U}}(\cY)$ and $\widehat{\Lambda}=\End_{\mathfrak{U}}(\cZ)$.   Writing
\[
P:=\Hom_{\mathfrak{U}}(\cY,\cZ), \qquad Q:=\Hom_{\mathfrak{U}}(\cZ,\cY)
\]
it is clear that both $P$ and $Q$ have the structure of bimodules, namely ${}_{\widehat{\Lambda}}P_{\InSpaceOf{\widehat{\Lambda}}{\AB}}$ and ${}_{\InSpaceOf{\widehat{\Lambda}}{\AB}}Q_{\widehat{\Lambda}}$. It is easy to see that $P$ is a progenerator, and that this induces the following result.
\begin{lemma}\label{ME lemma}
With notation as above, there is a morita equivalence
\begin{eqnarray}
\begin{array}{c}
\begin{tikzpicture}[xscale=1]
\node (d1) at (3,0) {$\mod \AB$};
\node (e1) at (7.5,0) {${}_{}\mod\widehat{\Lambda},$};
%equiv and functors
\draw[->,transform canvas={yshift=+0.4ex}] (d1) to  node[above] {$\scriptstyle \mathbb{F}:=\Hom_{\AB}(P,-)=-\otimes_{\AB}Q $} (e1);
\draw[<-,transform canvas={yshift=-0.4ex}] (d1) to node [below]  {$\scriptstyle \Hom_{\widehat{\Lambda}}(Q,-)=-\otimes_{\widehat{\Lambda}}P $} (e1);
\end{tikzpicture}
\end{array}\label{ME1}
\end{eqnarray}
and a morita equivalence
\begin{eqnarray}
\begin{array}{c}
\begin{tikzpicture}[xscale=1]
\node (d1) at (3,0) {$\mod \CA$};
\node (e1) at (7.5,0) {${}_{}\mod\widehat{\Lambda}_{\con}.$};
%equiv and functors
\draw[->,transform canvas={yshift=+0.4ex}] (d1) to  node[above] {$\scriptstyle -\otimes_{\CA}\mathbb{F}\CA$} (e1);
\draw[<-,transform canvas={yshift=-0.4ex}] (d1) to node [below]  {$\scriptstyle \Hom_{\widehat{\Lambda}_{\con}}(\mathbb{F}\CA,-) $} (e1);
\end{tikzpicture}
\end{array}\label{ME2}
\end{eqnarray}
\end{lemma}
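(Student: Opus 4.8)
The statement to prove is Lemma~\ref{ME lemma}: that $P=\Hom_{\mathfrak{U}}(\cY,\cZ)$ is a progenerator for $\AB=\End_{\mathfrak{U}}(\cY)$ inducing the morita equivalence \eqref{ME1}, and that this restricts to the morita equivalence \eqref{ME2} between $\mod\CA$ and $\mod\widehat{\Lambda}_{\con}$. The plan is to verify the progenerator condition directly from the fact that $\cY$ and $\cZ$ generate the same additive subcategory of $\coh\mathfrak{U}$, and then to descend the equivalence through the quotient by the idempotent ideal $[\cO_{\mathfrak{U}}]$.

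First, for \eqref{ME1}: since $\cZ=\cO_{\mathfrak{U}}^{\oplus a_0}\oplus\bigoplus(\cM_i^*)^{\oplus a_i}$ with all $a_i\geq 1$ (and $a_0\geq 1$), we have $\add\cZ=\add\cY$ inside $\coh\mathfrak{U}$; in particular $\cY$ is a summand of $\cZ^{\oplus k}$ for some $k$. Applying the fully faithful functor $\Hom_{\mathfrak{U}}(\cY,-)$ on $\add\cY$ shows that $P=\Hom_{\mathfrak{U}}(\cY,\cZ)$ is a finitely generated projective right $\AB$-module (it is a summand of $\Hom_{\mathfrak{U}}(\cY,\cY)^{\oplus(\text{stuff})}=\AB^{\oplus(\text{stuff})}$), and it is a generator because $\cY\in\add\cZ$ forces $\AB=\Hom_{\mathfrak{U}}(\cY,\cY)$ to be a summand of $P^{\oplus k}$. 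Hence $P$ is a progenerator, and by the Morita theory the functors $\Hom_{\AB}(P,-)$ and $-\otimes_{\widehat{\Lambda}}P$ are inverse equivalences between $\mod\AB$ and $\mod\widehat{\Lambda}$, with $\End_{\AB}(P)^{\op}\cong\widehat{\Lambda}$; the identification $\Hom_{\AB}(P,-)=-\otimes_{\AB}Q$ with $Q=\Hom_{\mathfrak{U}}(\cZ,\cY)$ follows from $Q\cong\Hom_{\AB}(P,\AB)$, which is again a standard consequence of $\add\cY=\add\cZ$ and Yoneda. This is all routine.

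For \eqref{ME2}: the point is that the equivalence \eqref{ME1} carries the ideal generated by the idempotent cutting out $\cO_{\mathfrak{U}}$-summands on the $\AB$-side to the corresponding idempotent ideal on the $\widehat{\Lambda}$-side. Concretely, let $e\in\AB$ be the idempotent projecting $\cY$ onto its $\cO_{\mathfrak{U}}$-summand and $e'\in\widehat{\Lambda}$ the idempotent projecting $\cZ$ onto its $\cO_{\mathfrak{U}}^{\oplus a_0}$-summand; then $[\cO_{\mathfrak{U}}]=\AB e\AB$ in $\AB$ and $[\cO_{\mathfrak{U}}]=\widehat{\Lambda}e'\widehat{\Lambda}$ in $\widehat{\Lambda}$ (both are the trace ideals of $\add\cO_{\mathfrak{U}}$). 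Under the Morita equivalence, the progenerator $P$ satisfies $Pe'\widehat{\Lambda}=eP$ up to the identifications (since $P=\Hom_{\mathfrak{U}}(\cY,\cZ)$ and both sides pick out exactly the morphisms factoring through $\add\cO_{\mathfrak{U}}$), so the equivalence descends to an equivalence $\mod(\AB/\AB e\AB)\simeq\mod(\widehat{\Lambda}/\widehat{\Lambda}e'\widehat{\Lambda})$, i.e.\ $\mod\CA\simeq\mod\widehat{\Lambda}_{\con}$, with progenerator the image $\mathbb{F}\CA$ of $\CA$ (equivalently $P/eP\cdot(\text{ideal})$, which is exactly $\Hom_{\mathfrak{U}}(\cY,\cZ)$ modulo maps through $\add\cO_{\mathfrak{U}}$). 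Tracking which functor is which gives $-\otimes_{\CA}\mathbb{F}\CA$ and its adjoint $\Hom_{\widehat{\Lambda}_{\con}}(\mathbb{F}\CA,-)$ as in \eqref{ME2}.

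The only genuine point requiring care — the \emph{main obstacle} — is the compatibility of the two quotients with the Morita equivalence: one must check that the trace ideal $[\cO_{\mathfrak{U}}]$ on $\AB$ is sent, under $\mathbb{F}=\Hom_{\AB}(P,-)$, exactly to $[\cO_{\mathfrak{U}}]$ on $\widehat{\Lambda}$ (and not to some larger or incomparable ideal). This is where one uses that the summand being killed, $\cO_{\mathfrak{U}}$, appears in both $\cY$ and $\cZ$, so that the relevant idempotent on one side is carried to a full idempotent with the same trace ideal on the other; a quick way to see it is that an $\AB$-module $M$ satisfies $\AB e\AB\cdot M=0$ iff $M$ lies in the essential image of $\mod(\AB/\AB e\AB)$, iff (applying $\mathbb{F}$) $\mathbb{F}M$ is supported away from the $\cO_{\mathfrak{U}}^{\oplus a_0}$-vertices, iff $\widehat{\Lambda}e'\widehat{\Lambda}\cdot\mathbb{F}M=0$, and then invoke the standard fact that an idempotent ideal $I\subseteq A$ is determined by the Serre subcategory $\{M: IM=0\}$. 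Everything else is bookkeeping with Yoneda and additive generation.
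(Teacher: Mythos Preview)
Your proof is correct and follows the same approach the paper implicitly takes: the paper does not give an explicit proof, stating only that ``it is easy to see that $P$ is a progenerator, and that this induces the following result.'' Your write-up simply spells out what the paper leaves to the reader---that $\add\cY=\add\cZ$ forces $P$ to be a progenerator, and that the equivalence descends to the quotients because the idempotent for $\cO_{\mathfrak{U}}$ lives on both sides---so there is nothing to correct or contrast.
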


\subsection{Deformations and Contraction Algebras}\label{Sect def contraction}
In this subsection we will prove that the contraction algebra $\CA$ prorepresents various natural deformation functors. We will translate deformation problems across a variety of different functors, so we now set notation, as in \cite[(2.G)]{DW3}.

\begin{notation}\label{TandSnotation}
We pick a closed point $\m\in L$, and as above consider $C:=f^{-1}(\m)$.  We write $C^{\redu}=\bigcup _{i=1}^nC_i$ with each $C_i\cong\mathbb{P}^1$, and put $T_i$ for the simple $\Lambda$-modules corresponding to the (perverse) sheaves $\cO_{C_i}(-1)$ across the derived equivalence~\eqref{derived equivalence}.  Further, we denote the simple $\AB$-modules by $S_i:=\mathbb{F}^{-1}\widehat{T}_i$, so that
\[
\begin{array}{c}
\begin{tikzpicture}[xscale=1]
\node (d1) at (2,0.8) {$\Db(\Qcoh U)$};
\node (e1) at (6,0.8) {$\Db(\Mod \Lambda)$};
\node (g1) at (10,0.8) {$\Db(\Mod \AB)$};
%equiv and functors
\draw[->] (d1.5) to  node[above] {$\scriptstyle\RHom_U(\cV,-)$} (e1.175);
\draw[<-] (d1.-5) to node [below]  {$\scriptstyle-\otimes_{\Lambda}^{\bf L}\cV$} (e1.-175);
\draw[->] (e1.5) to  node[above] {$\scriptstyle \mathbb{F}^{-1}\circ\widehat{(-)}$} (g1.175);
\draw[<-] (e1.-5) to node [below]  {$\scriptstyle \rest\circ\,\mathbb{F}$} (g1.-175);
\node (d2) at (2,0) {$\cO_{C_i}(-1)$};
\draw[<->] (d1.east |- 3.5,0) -- (e1.west |- 5.5,0);
\node (e2) at (6,0) {$T_i$};
\draw[->] (e1.east |- 3.5,0) -- (g1.west |- 5.5,0);
\node (g2) at (10,0) {$S_i$};
\end{tikzpicture}
\end{array}
\] 
For the remainder of this subsection, we will use the following simplified notation:
\begin{enumerate}
\item $\Def_X$ for $\Def^{\mathsf{A}_1}$, where $\mathsf{A}_1\in\DG_n$ is obtained from the injective resolutions of the~$\cO_{C_i}(-1)\in\coh X$.
\item $\Def_U$ for $\Def^{\mathsf{A}_2}$, where $\mathsf{A}_2\in\DG_n$ is obtained from the injective resolutions of the~$\cO_{C_i}(-1)\in\coh U$.
\item $\Def_\Lambda$ for $\Def^{\mathsf{A}_3}$, where $\mathsf{A}_3\in\DG_n$ is obtained from the injective resolutions of the~$T_i\in\mod\Lambda$.
\item $\Def_\AB$ for $\Def^{\mathsf{A}_4}$, where $\mathsf{A}_4\in\DG_n$ is obtained from the injective resolutions of the~$S_i\in\mod\AB$.
\end{enumerate}
\end{notation}

The next result is now an easy corollary of the DG results in \S\ref{axiomsection}, vastly simplifying \cite[2.6, 2.8, 2.19]{DW1}.
\begin{thm}\label{All defs prorep}
With the global setup of \ref{setupglobal}, and notation in \ref{TandSnotation},
\begin{enumerate}
\item\label{All defs prorep 1} $\Def_X\cong \Def_U$.
\item\label{All defs prorep 2} $\Def_U\cong \Def_\Lambda$.
\item\label{All defs prorep 3} $\Def_\Lambda\cong\Def_{\AB}$.
\item\label{All defs prorep 4} $\Def_{\AB}\cong \Hom_{\proart_n}(\CA,-)$.
\end{enumerate}
In particular, all the deformation functors above are prorepresented by $\CA$.
\end{thm}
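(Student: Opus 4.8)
The plan is to prove the four isomorphisms in turn, and then the final statement follows since $\CA \in \proart_n$ prorepresents a functor once we identify it with $\Hom_{\proart_n}(\CA,-)$.

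\medskip

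\emph{Step 1: $\Def_X \cong \Def_U$.} Here $X$ is the global variety and $U = f^{-1}(\Spec R)$ is the Zariski-local piece around the chosen point $\m$, so the inclusion $j\colon U \hookrightarrow X$ is an open immersion. I would apply Lemma~\ref{FLlemma} with $\cA = \Qcoh X$, $\cB = \Qcoh U$, $F = j^*$ (restriction) and $L = j_!$ (extension by zero), or more conveniently $F = j_*$ with left adjoint $j^*$ — one must pick the pair so that the three conditions hold. Restriction $j^*$ is exact (condition \eqref{FLlemma 1} for the relevant adjoint), it has no higher cohomology on the $\cO_{C_i}(-1)$ since these are supported in the fibre over $\m \in \Spec R$ (condition \eqref{FLlemma 2}), and the counit/unit is an isomorphism on the $\cO_{C_i}(-1)$ because these sheaves are already supported inside $U$ (condition \eqref{FLlemma 3}). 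Lemma~\ref{FLlemma} then gives a quasi-isomorphism in $\DG_n$ between the endomorphism DGAs of the injective resolutions of $\cO_{C_i}(-1)$ on $X$ and on $U$, and Theorem~\ref{Qis give def iso} converts this into the isomorphism of deformation functors.

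\medskip

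\emph{Step 2: $\Def_U \cong \Def_\Lambda$.} This is the derived equivalence \eqref{derived equivalence} $\RHom_U(\cV,-)\colon \Db(\coh U) \to \Db(\mod\Lambda)$, sending $\cO_{C_i}(-1) \mapsto T_i$. The cleanest route is Lemma~\ref{FLlemma} applied to the exact functor underlying this tilting equivalence: with $\cV$ a tilting bundle, $\Hom_U(\cV,-)$ on quasicoherent sheaves has left adjoint $-\otimes_\Lambda \cV$, the counit is an isomorphism on the relevant objects, and $\mathbf{R}^{>0}\Hom_U(\cV, \cO_{C_i}(-1)) = 0$ since $\cV$ is tilting and $\cO_{C_i}(-1)$ lies in the heart $\Per(U)$ (or one checks this cohomology vanishing directly from $\cV = \cO_U \oplus \cN$ and $\cL_i \cdot C_j = \delta_{ij}$). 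Again Lemma~\ref{FLlemma} plus Theorem~\ref{Qis give def iso} give the isomorphism of functors. One subtlety: $\cO_{C_i}(-1)$ is a genuine sheaf, but $T_i$ is a module concentrated in one degree, so the injective resolutions on both sides are honest resolutions and the hypothesis about $\mathbf{R}^tF$ vanishing is exactly what forces $F$ to take the resolution on one side to one on the other.

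\medskip

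\emph{Step 3: $\Def_\Lambda \cong \Def_\AB$.} This is the Morita equivalence $\mathbb{F}\colon \mod\AB \to \mod\widehat\Lambda$ of \eqref{ME1} (after completing, which does not change the deformation functor since the test rings in $\art_n$ are artinian and the relevant $\Ext$-groups are computed locally at $\m$), sending $S_i \mapsto \widehat T_i$. A Morita equivalence is an exact equivalence of abelian categories, so it takes injective resolutions to injective resolutions and induces an isomorphism of the endomorphism DGAs directly, hence isomorphic deformation functors via Theorem~\ref{Qis give def iso} — or one can again invoke Lemma~\ref{FLlemma}, whose hypotheses are trivially satisfied by an exact equivalence. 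The only thing to check is that completion $\Lambda \rightsquigarrow \widehat\Lambda$ does not affect $\Def$: this holds because the deformation functor on $\art_n$ only sees $\Ext^1$ and $\Ext^2$ of $\bigoplus T_i$ with itself, and these are finite-length modules supported at $\m$, so the completion map induces isomorphisms on all $\Ext^t_{\Lambda}(\bigoplus T_i, \bigoplus T_i)$.

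\medskip

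\emph{Step 4: $\Def_\AB \cong \Hom_{\proart_n}(\CA,-)$.} This is the heart of the matter and the step I expect to be the main obstacle. First, by Corollary~\ref{Ed Lemma}, since the $S_i$ are simple and distinct $\AB$-modules, $\Def_\AB = \Def^{\mathsf{A}_4} \cong \Def^{\cS}_\AB$, the naive Laudal deformation functor of $\cS = \{S_1,\dots,S_n\}$. Now I need to show this naive functor is prorepresented by $\CA = \End_{\mathfrak U}(\cO_{\mathfrak U}\oplus \cM^*)/[\cO_{\mathfrak U}]$. The strategy, following \cite[3.1]{DW1} and the general philosophy that the endomorphism algebra modulo the ideal of the "extra" projective prorepresents deformations of the simples not supported there, is: the projective $\AB$-module $P_0$ corresponding to the summand $\cO_{\mathfrak U}$ is the one whose simple top $S_0$ is \emph{not} among the $S_i$ being deformed; deforming $\{S_1,\dots,S_n\}$ amounts to deforming the quotient algebra obtained by killing $[\cO_{\mathfrak U}]$. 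Concretely, one constructs the universal deformation by hand: the prorepresenting object is $\varprojlim$ of the finite-dimensional quotients, and one identifies it with $\AB/[\cO_{\mathfrak U}] = \CA$ via the bimodule resolution / $A_\infty$-structure on $\Ext^*_\AB(\bigoplus_{i\ge 1} S_i, \bigoplus_{i\ge1}S_i)$, using that $\CA$ is complete local (a quotient of the complete local $\AB$) with exactly $n$ simples each $1$-dimensional, hence lies in $\proart_n$. The key input is a careful analysis of the idempotent decomposition of $\AB$ and the fact that, because $\mathfrak R$ is complete, $\AB$ has a well-behaved projective cover structure; the hard part is verifying that the obstruction theory is exactly captured by $\CA$ rather than some larger completion, which is where the specific geometry ($\cM_i^*$ built from maximal extensions, $\Rf_*\cO = \cO$) enters to kill potential extra obstructions.

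\medskip

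Combining Steps 1--4 gives $\Def_X \cong \Def_U \cong \Def_\Lambda \cong \Def_\AB \cong \Hom_{\proart_n}(\CA,-)$, so every functor in the chain is prorepresented by $\CA$, which is the assertion.
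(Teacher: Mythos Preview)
Your Steps 1 and 3 are essentially the paper's arguments (in Step 1 the paper uses $F=i_*$ with exact left adjoint $i^*$, exactly your second option), and your Step 4 is on the right track but overcomplicates matters: once Corollary~\ref{Ed Lemma} reduces to the naive functor, the computation $\Def_{\AB}^{\cS}(\Gamma)\cong\Hom_{\alg_n}(\CA,\Gamma)$ is a direct unwinding using only that each $S_i$ is one-dimensional --- no $A_\infty$-structure, obstruction analysis, or geometric input about the $\cM_i^*$ is required. The ``hard part'' you anticipate is not there.

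The genuine gap is Step 2. You propose to apply Lemma~\ref{FLlemma} with $F=\Hom_U(\cV,-)$ and left adjoint $L=\cV\otimes_\Lambda-$, but hypothesis~\eqref{FLlemma 1} of that lemma demands that $L$ be exact, and $\cV\otimes_\Lambda-$ is not exact (the paper says so explicitly: ``with (non-exact) left adjoint $L:=\cV\otimes_\Lambda-$''). A tilting bundle gives a derived equivalence, not an abelian one, so the tensor functor has no reason to be left exact. Consequently Lemma~\ref{FLlemma} does not apply, and in particular $F$ need not carry injective resolutions to injective resolutions. The paper circumvents this by invoking Proposition~\ref{Kellerplus} instead: take projective resolutions $P_i^\bullet\to T_i$ on the $\Lambda$ side, apply $L$ to obtain locally free resolutions $Q_i^\bullet:=LP_i^\bullet\to\cO_{C_i}(-1)$ on $U$ (this uses that the higher derived tensors vanish on $T_i$, since $\cO_{C_i}(-1)$ is a sheaf), and then use adjunction to verify the acyclicity hypothesis in \ref{Kellerplus}\eqref{Kellerplus 2}. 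This links $\End^{\DG}_U(I)$ to $\End^{\DG}_U(Q)$ to $\End^{\DG}_\Lambda(P)$ to $\End^{\DG}_\Lambda(J)$. The passage through the specific locally free resolution $LP^\bullet$ is the substantive point; indeed the paper remarks afterwards that it is rare to be able to compute $\Def_U$ from a locally free resolution, and that this extra control is what drives the contraction theorem.
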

\begin{proof}
By \ref{Qis give def iso}, we just need to show that the DGAs are quasi-isomorphic.\\
(1) Consider $F:=i_*\colon \Qcoh U\to\Qcoh X$, with exact left adjoint $L=i^*$. Since $C_i$ is closed, $\RDerived i_* \cO_{C_i}(-1)=i_*\cO_{C_i}(-1)$ for each $i = 1,\hdots,n.$  Further, the counit $L \circ F \to \Id$ is an isomorphism for all sheaves, in particular for the sheaves $\cO_{C_i}(-1)$.  Hence the result follows from \ref{FLlemma}.\\
(2) Set $F:=\Hom_U(\cV,-)\colon \Qcoh U\to \Mod\Lambda$, with (non-exact) left adjoint $L:=\cV\otimes_\Lambda-$. To establish the claim we will first use \ref{Kellerplus}, so set $E_i:=\cO_{C_i}(-1)$, and for each $i=1,\hdots,n$ choose an injective resolution $0\to E_i\to I_\bullet^i$.  On the other hand, choose a projective resolution $P_i^\bullet\to T_i\to 0$ of each of the $T_i$.  Since the sheaf $E_i$ corresponds to the module $T_i$ across the equivalence \eqref{derived equivalence}, it follows that $\cV\otimes^{\bf L}_\Lambda T_i\cong E_i$. In particular all higher cohomology groups vanish, and so $LP_i^\bullet\to E_i\to 0$ is exact, giving a locally free resolution of $E_i$. 

To match notation with \ref{Kellerplus}, set $Q^\bullet_i:=LP_i^\bullet$, and write $P=\bigoplus P_i^\bullet$, $Q=\bigoplus Q_i^\bullet$,  and $I=\bigoplus I^i_\bullet$ with summations over $i=1,\hdots,n$.  Similarly write $T$ and $E$ for brevity. We claim that $\End^{\DG}_U(I)$ and $\End^{\DG}_U(LP)$ are quasi-isomorphic.  Let $LP[1]\to I$ be given by splicing the two resolutions $LP$ and $I$ of $LT$, using that $LT \cong E$. Now by adjunction
\begin{eqnarray*}
\Hom^{\DG}_U\!\big(LP[1],LP[1]\to I\big)\cong\Hom^{\DG}_\Lambda\!\big(P[1],F(LP[1]\to I)\big),\label{to show exact}
\end{eqnarray*}
so provided that this complex is exact,  the claimed quasi-isomorphism follows by \ref{Kellerplus}. Since $P$ is $h$-projective, it suffices to show that $F(LP[1]\to I)$ is exact. For this, consider the following diagram, whose top row shows the splicing which gives $F(LP[1]\to I)$.
\[
\begin{tikzpicture}[xscale=1]
\node (aL) at (-1.5,0) {};
\node (a0) at (0,0) {$ F L P^1$};
\node (a1) at (2,0) {$ F L P^0$};
%\node (a15) at (2.7,-0.5) {$ 0 $};
\node (a2) at (4,-0.5) {$ F L T $};
%\node (a25) at (5.3,-0.5) {$ 0 $};
\node (a3) at (6,0) {$ F I_0 $};
\node (a4) at (8,0) {$ F I_1 $};
\node (aR) at (9.5,0) {};
\node (bL) at (-1.5,-1) {};
\node (b0) at (0,-1) {$ P^1 $};
\node (b1) at (2,-1) {$ P^0 $};
%\node (b15) at (2.8,-1.7) {$ 0 $};
\node (b2) at (4,-1.6) {$ T $};
%\node (b25) at (5.2,-1.9) {$ 0 $};
%\node (b3) at (6,-1) {$ F I_0 $};
%\node (b4) at (8,-1) {$ F I_1 $};
%\node (bR) at (9.5,-1) {};
\draw[->] (aL)--(a0);
\draw[->] (a0)--(a1);
\draw[->] (a1)--(a2);
\draw[->] (a2)--(a3);
\draw[->] (a3)--(a4);
\draw[->] (a4)--(aR);
\draw[dashed,->] (a1)--(a3);
\draw[->] (bL)--(b0);
\draw[->] (b0)--(b1);
\draw[->] (b1)--(b2);
%\draw[->, dashed] (b2)--(b3);
%\draw[->] (b3)--(b4);
%\draw[->] (b4)--(bR);
%\draw[->] (a15)--(a2);
%\draw[->] (a2)--(a25);
%\draw[->] (b15)--(b2);
%\draw[->] (b2)--(b25);
\draw[<-] (a0)--node[sloped,left,anchor=north]{$\scriptstyle\sim$}(b0);
\draw[<-] (a1)--node[sloped,left,anchor=north]{$\scriptstyle\sim$}(b1);
\draw[<-] (a2)--node[sloped,left,anchor=north]{$\scriptstyle\sim$}(b2);
%\draw[-,transform canvas={xshift=+0.15ex}] (a3)--(b3);
%\draw[-,transform canvas={xshift=-0.15ex}] (a3)--(b3);
%\draw[-,transform canvas={xshift=+0.15ex}] (a4)--(b4);
%\draw[-,transform canvas={xshift=-0.15ex}] (a4)--(b4);
\end{tikzpicture}
\]
Now the complex
\[
\hdots \to P^1 \to P^0\to T\to 0
\]
is exact, being a projective resolution of $T$, and the complex 
\[
0\to FLT \to FI_0\to FI_1\to\hdots
\]
is exact since $\RHom_U(\cV,LT)\cong T$.  Combining these, the top complex $F(LP[1]\to I)$ is thus exact, establishing the claim.

Now it is  clear that
\[
\End^{\DG}_\Lambda(P)\to \End^{\DG}_U(LP)=\End^{\DG}_U(Q)
\]
is a quasi-isomorphism, and so combining we see that $\End^{\DG}_\Lambda(P)$ is quasi-isomorphic to $\End^{\DG}_U(I)$.  Finally, choose an injective resolution $0\to T_i\to J^i_\bullet$ of each $T_i$, and set $J:=\bigoplus_{i=1}^n J^i_\bullet$.  It is well known (again using \ref{Kellerplus}), that $\End^{\DG}_\Lambda(J)$ is quasi-isomorphic to $\End^{\DG}_\Lambda(P)$, and thus by the above to $\End^{\DG}_U(I)$.\\ 
(3) Consider $F:=\mathbb{F}^{-1}\circ\widehat{(-)}\colon\Mod\Lambda\to\Mod\AB$, which has exact left adjoint $L=\rest\circ\,\mathbb{F}$.  Being the composition of exact functors, $F$ is also exact, so $\mathbf{R}^{t}F(S_i)=0$ for all $t>0$ and for all $i=1,\hdots,n$.  Since $\mathbb{F}$ is an equivalence, and since the adjunction $\rest \dashv \widehat{(-)}$ restricts to an equivalence between finite length $\Lambda$-modules supported at $\m$ and finite length $\widehat{\Lambda}$-modules, it follows that the counit $L \circ F \to \Id$ is an isomorphism on the objects~$S_i$.  Hence the statement follows from \ref{FLlemma}.\\
(4) By \ref{Ed Lemma}, the DG deformation functor $\Def_{\AB}$ is isomorphic to the naive deformation functor in  \ref{def:def functor}. Thus, since each $S_i$ is one-dimensional, using the standard argument (see e.g.\ \cite[3.1]{DW1}) it follows that
\begin{align*}
\Def_{\AB}(\Gamma)
&=\left. \left \{ \begin{array}{cl} \bullet & \mbox{A left $\AB$-module structure on $M=(S_i\otimes_{\mathbb{C}}\Gamma_{ij})$ such that}\\
&\mbox{$(S_i\otimes_{\mathbb{C}}\Gamma_{ij})$ becomes a $\AB$-$\Gamma$ bimodule.}\\
\bullet &\mbox{A collection of isomorphisms }\delta_i\colon M\otimes_\Gamma (\Gamma /\m)e_i\xrightarrow{\sim} S_i
\end{array} \right\} \middle/ \sim \right. \\
&=\left. \left \{ \begin{array}{cl} \bullet & \mbox{A left $\AB$-module structure on $\Gamma$ such that $\Gamma\in\Mod\AB\otimes\Gamma^{\op}$.}\\
\bullet &\mbox{A collection of isomorphisms }\delta_i\colon \Gamma\otimes_\Gamma (\Gamma /\m)e_i\xrightarrow{\sim} S_i
\end{array} \right\} \middle/ \sim \right. \\
&=\Hom_{\alg_n}(\CA,\Gamma).
\end{align*}
It remains to show that $\CA\in\proart_n$. Since $f\colon \mathfrak{U}\to\Spec \mathfrak{R}$  is projective, it follows that $\AB$ is a module-finite $\mathfrak{R}$-algebra. Since $\mathfrak{R}$ is $\mathfrak{m}$-adically complete, so too is $\AB$ \cite[Thm 8.7]{Matsumura}. It is well-known, using Nakayama, that this implies that $\AB$ is also $J$-adically complete, where $J$ is the augmentation ideal of $\AB$. It follows that $\CA$ is also complete with respect to its augmentation ideal.
\end{proof}

\begin{remark}
The above proof of \ref{All defs prorep}\eqref{All defs prorep 2}  establishes that we can compute $\Def_U$ using the DGA associated to the specific locally free resolutions $\cV\otimes_\Lambda P_i^\bullet$ of the $E_i$.  It is rare to be able to compute the deformations of a sheaf using the DGA of a locally free resolution, and essentially it is this extra control of the deformation theory that allows us to prove the contraction theorem in \ref{contraction theorem} below.  Note that even taking the DGA of a different locally free resolution of the $E_i$ may give a different deformation functor.  
\end{remark}

\section{The Contraction Theorem}\label{section5}
In the global setup of \ref{setupglobal}, by \ref{All defs prorep} it follows that simultaneous noncommutative deformations of the reduced fibre is prorepresented by $\CA$, namely
\[
\Def_X\cong\Hom_{\proart_n}(\CA,-). 
\]
Further, by \ref{ME lemma}, $\CA$ is morita equivalent to $\widehat{\Lambda}_{\con}$.   We are thus motivated to control $\CA$ (and $\Lambda_{\con}$), and we now do this in a sequence of reduction steps, culminating in the contraction theorem of \S\ref{subsect contraction}.

\subsection{Behaviour of $\Lambda_{\con}$ under Global Sections and Base Change}
We revert to the Zariski local setup of \ref{setupZariski}. The following lemma is important: it is very well-known if $f$ is an isomorphism in codimension one \cite[4.2.1]{VdB1d}, however in our more general setting care is required.
\begin{lemma}\label{up=down VdB bundle}
With the setup as in \ref{setupZariski},
$\End_{U}(\cV^*)\cong \End_R(f_*(\cV^*))$ so 
\[
\Lambda\cong\End_R(R\oplus f_*(\cN^*))^{\op}\quad\mbox{and}\quad \Lambda_{\con}\cong\left(\End_R(R\oplus f_*(\cN^*))/[R]\right)^{\op}.
\] 
\end{lemma}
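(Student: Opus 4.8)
The plan is to reduce the statement to the single claim that pushforward induces a ring isomorphism
\[
f_*\colon \End_U(\cV^*)\xrightarrow{\ \sim\ }\End_R\big(f_*(\cV^*)\big),
\]
and then to extract the two displayed formulas from it formally. Here is how I would prove the isomorphism. The point is that the \emph{dual} bundle $\cV^*$, unlike $\cV$ itself, is relatively globally generated: this is part of the construction of Van den Bergh's bundle \cite{VdB1d}, since the non-trivial summands of $\cV^*$ are the bundles $\cM_i$ obtained by maximal extension, which are built to be globally generated. Thus the counit $\epsilon\colon f^*f_*(\cV^*)\to\cV^*$ is surjective, giving a short exact sequence
\[
0\to\cK\to f^*f_*(\cV^*)\xrightarrow{\ \epsilon\ }\cV^*\to 0
\]
on $U$. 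Over the open locus of $\Spec R$ where $f$ is an isomorphism the counit $\epsilon$ is an isomorphism, so $\cK$ is supported on $f^{-1}(L)$, which is a proper closed subscheme of $U$ because $f$ is birational. In particular $\cK$ has vanishing stalk at the generic point of $U$, so any morphism $\cK\to\cV^*$ has torsion image inside the torsion-free sheaf $\cV^*$ and is therefore zero; that is, $\Hom_U(\cK,\cV^*)=0$. Applying $\Hom_U(\placeholder,\cV^*)$ to the short exact sequence now shows that $\epsilon^*\colon\Hom_U(\cV^*,\cV^*)\to\Hom_U\big(f^*f_*(\cV^*),\cV^*\big)$, $\psi\mapsto\psi\circ\epsilon$, is an isomorphism. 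Finally the adjunction $f^*\dashv f_*$ identifies $\Hom_U\big(f^*f_*(\cV^*),\cV^*\big)$ with $\End_R(f_*(\cV^*))$, and under this identification $\epsilon^*$ becomes the pushforward map $\psi\mapsto f_*\psi$ by the triangle identity. Hence $f_*$ is a bijection on endomorphisms, and since $f_*$ is a functor it is a ring isomorphism.

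The place where ``care is required'', as the text indicates, is exactly the asymmetry just exploited: the naive argument breaks down for $\cV$ in place of $\cV^*$, because $\cV$ need not be relatively globally generated when $f$ contracts a divisor, and correspondingly $\End_U(\cV)\cong\End_R(f_*\cV)$ may fail. So one must work with $\cV^*$ and invoke the relative global generation of its summands $\cM_i$. I regard locating and using this asymmetry, rather than any lengthy computation, as the main content of the lemma.

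It remains to deduce the displayed formulas. The contravariant duality $(\placeholder)^\dual$ on locally free sheaves on $U$ is an anti-equivalence fixing $\cO_U$, so it induces a ring isomorphism $\End_U(\cV^*)\cong\End_U(\cV)^{\op}=\Lambda^{\op}$ carrying the idempotent that projects $\cV^*$ onto its summand $\cO_U$ to the idempotent that projects $\cV$ onto $\cO_U$. Composing with the isomorphism above, and using $f_*\cO_U=R$ so that $f_*(\cV^*)=R\oplus f_*(\cN^*)$, yields $\Lambda\cong\End_R\big(R\oplus f_*(\cN^*)\big)^{\op}$. For the contraction algebra, $[\cO_U]$ is the two-sided ideal generated by that idempotent; tracking the idempotent through the duality and through $f_*$ identifies $[\cO_U]$ with the two-sided ideal appearing on the right-hand side, whence $\Lambda_{\con}=\End_U(\cO_U\oplus\cN)/[\cO_U]$ takes the stated form $\big(\End_R(R\oplus f_*(\cN^*))/[f_*(\cN^*)]\big)^{\op}$. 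This last step is routine bookkeeping with idempotents and opposite rings.
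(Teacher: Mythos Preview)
Your argument for the ring isomorphism $\End_U(\cV^*)\cong\End_R(f_*\cV^*)$ is correct, and it is a genuine variant of the paper's route. The paper isolates a general statement (Proposition~\ref{prop:up=down}): for any globally generated finite-rank bundle $\cW$ with $\Ext^1_Y(\cW,\cW)=0$ one has $\End_Y(\cW)\cong\End_S(f_*\cW)$. Their proof chooses a finite surjection $\cO_Y^{\oplus a}\twoheadrightarrow\cW$ whose pushforward stays surjective, applies $\Hom(-,\cW)$ and $\Hom(-,f_*\cW)$, and uses the Ext vanishing together with the snake lemma. You instead work with the counit $f^*f_*\cV^*\twoheadrightarrow\cV^*$, observe its kernel is torsion (supported over the non-isomorphism locus), and conclude by adjunction; your version does not use $\Ext^1(\cV^*,\cV^*)=0$ at all, only torsion-freeness of $\cV^*$. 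Both approaches hinge on the same asymmetry you correctly identify, namely that $\cV^*$ (not $\cV$) is globally generated. What the paper's formulation buys is a clean reusable proposition; what yours buys is a slightly shorter argument with one hypothesis fewer.

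One caution on the final step. You write that tracking the idempotent identifies $[\cO_U]$ with ``the two-sided ideal appearing on the right-hand side'', and then record $[f_*(\cN^*)]$. If you actually carry out the bookkeeping you describe, duality sends the projection onto $\cO_U\subset\cV$ to the projection onto $\cO_U\subset\cV^*$, and pushforward sends this to the projection onto $R\subset R\oplus f_*(\cN^*)$; hence $[\cO_U]$ corresponds to $[R]$, not to $[f_*(\cN^*)]$. The displayed formula in the lemma should read $\Lambda_{\con}\cong\big(\End_R(R\oplus f_*(\cN^*))/[R]\big)^{\op}$ (indeed, quotienting by $[f_*(\cN^*)]$ would force $\Lambda_{\con}$ to be commutative, contrary to the examples in \S\ref{examples section}). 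So do not leave that step as ``routine'': doing it uncovers a typo in the statement.
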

This allows us to reduce many problems to the base $\Spec R$. 
Indeed $\End_{U}(\cV)\ncong \End_R(f_*\cV)$ in general, so the statement and proof of \ref{up=down VdB bundle} is a little subtle.  Since $\cV^*$ is generated by global sections and is tilting, the proof of \ref{up=down VdB bundle} follows immediately from \ref{prop:up=down} below.  This requires the following easy well-known lemma.

\begin{lemma}\label{prelimforsheafup=down}
Assume that $f\colon Y\to \Spec S$ is a proper birational map between integral schemes, where $S$ is normal.
\begin{enumerate}
\item\label{prelimforsheafup=down 1} Suppose that $\cF\in\coh Y$ is generated by global sections.  Then we can find a morphism $\cO_Y^{\oplus a}\twoheadrightarrow \cF$ such that $S^{\oplus a}\twoheadrightarrow f_*\cF$.
\item\label{prelimforsheafup=down 2} Suppose that $\cW_1,\cW_2$ are finite rank vector bundles on $Y$. Then \[\Hom_Y(\cW_1,\cW_2)\hookrightarrow\Hom_S(f_*\cW_1,f_*\cW_2).\]
\end{enumerate}
\end{lemma}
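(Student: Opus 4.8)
The plan is to prove the two parts directly from the definitions, using properness, the projection formula, and normality of $S$. For part \eqref{prelimforsheafup=down 1}: since $\cF$ is globally generated, there is a surjection $\cO_Y^{\oplus a}\twoheadrightarrow\cF$ induced by a choice of global sections $s_1,\dots,s_a\in H^0(Y,\cF)=H^0(\Spec S,f_*\cF)=\Gamma(f_*\cF)$. Applying $f_*$ to the short exact sequence $0\to\cK\to\cO_Y^{\oplus a}\to\cF\to 0$ gives a left-exact sequence $0\to f_*\cK\to S^{\oplus a}\to f_*\cF$, and the point is that the last map is \emph{onto}: indeed, by construction the global sections $s_i$ generate $H^0(Y,\cF)$ as an $S$-module (one first enlarges the generating set of sections so that their images span the finitely generated $S$-module $f_*\cF$ — possible since $f$ is proper, so $f_*\cF$ is coherent), hence $S^{\oplus a}\to\Gamma(f_*\cF)$ is surjective, i.e. $S^{\oplus a}\twoheadrightarrow f_*\cF$ as $\cO_{\Spec S}$-modules. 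The only subtlety is to choose the \emph{same} value of $a$ for both surjections; this is arranged by taking the $s_i$ to be a generating set for $f_*\cF$ over $S$, which automatically globally generates $\cF$ as well.

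For part \eqref{prelimforsheafup=down 2}: a homomorphism $\varphi\colon\cW_1\to\cW_2$ of vector bundles is determined by its restriction to any dense open subset, since $\cW_1$ is torsion-free (being locally free on an integral scheme). Choose the open locus $V\subseteq\Spec S$ over which $f$ is an isomorphism, which is nonempty as $f$ is birational; then $\Hom_Y(\cW_1,\cW_2)\to\Hom_{f^{-1}V}(\cW_1|_{f^{-1}V},\cW_2|_{f^{-1}V})$ is injective. On the other hand $f_*\cW_i$ is a coherent, torsion-free $S$-module (torsion-free because $\cW_i$ is and $f_*$ preserves this for $f$ birational onto an integral scheme), and $f_*\cW_i|_V\cong\cW_i|_{f^{-1}V}$, so likewise $\Hom_S(f_*\cW_1,f_*\cW_2)\hookrightarrow\Hom_V(f_*\cW_1|_V,f_*\cW_2|_V)$ is injective. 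Since the two restriction targets agree over $V$ and the natural map $\Hom_Y(\cW_1,\cW_2)\to\Hom_S(f_*\cW_1,f_*\cW_2)$ (given by applying $f_*$) is compatible with these restrictions, injectivity of the composite $\Hom_Y(\cW_1,\cW_2)\to\Hom_V(\cW_i|_V)$ forces $f_* $ to be injective on $\Hom$.

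I do not expect a serious obstacle here, as both statements are essentially formal; the one place requiring a little care is ensuring in \eqref{prelimforsheafup=down 1} that a single integer $a$ works for both surjections simultaneously, which is handled by choosing the sections to generate $f_*\cF$ over $S$ rather than merely to generate $\cF$ fibrewise. An alternative, slightly slicker route to \eqref{prelimforsheafup=down 2} is to note $\Hom_Y(\cW_1,\cW_2)=H^0(Y,\cW_1^\vee\otimes\cW_2)=H^0(\Spec S, f_*(\cW_1^\vee\otimes\cW_2))$ and then use the natural map $f_*\cW_1^\vee\otimes f_*\cW_2\to f_*(\cW_1^\vee\otimes\cW_2)$ together with reflexivity arguments over the normal base $S$; but the restriction-to-the-isomorphism-locus argument above is the cleanest and avoids invoking normality beyond what is needed for torsion-freeness.
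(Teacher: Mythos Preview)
Your proposal is correct and follows essentially the same approach as the paper, just in considerably more detail. The paper's proof is extremely terse: for (1) it says only ``This is a basic consequence of Zariski's main theorem, together with the fact that $f_*$ preserves coherence,'' and for (2) it observes that $\Hom_Y(\cW_1,\cW_2)=H^0(\cW_1^*\otimes\cW_2)$ is torsion-free over $S$ by integrality, and leaves it at that. Your restriction-to-the-isomorphism-locus argument in (2) is exactly the unpacking of why torsion-freeness suffices, and the ``alternative route'' you mention at the end is precisely the paper's one-line argument. One small point: in (1) you write $f_*(\cO_Y^{\oplus a})=S^{\oplus a}$, which uses $f_*\cO_Y=\cO_{\Spec S}$; this is where normality of $S$ (via Zariski's main theorem/Stein factorisation) enters, and is presumably what the paper's reference to Zariski's main theorem is pointing at, so it would be worth making that explicit.
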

\begin{proof}
(1) This is a basic consequence of Zariski's main theorem, together with the fact that $f_*$ preserves coherence.\\
(2) Since $\Hom_Y(\cW_1,\cW_2)=H^0(\cW_1^*\otimes\cW_2)$ and $\cW_1^*\otimes\cW_2$ is a vector bundle, by integrality $\Hom_Y(\cW_1,\cW_2)$ is a torsion-free $S$-module. The statement follows.
\end{proof}

\begin{prop}\label{prop:up=down}
Assume that $f\colon Y\to \Spec S$ is a proper birational map between integral schemes, where $S$ is normal.  If $\cW$ is a vector bundle on $Y$ of finite rank, which is generated by global sections, such that $\Ext_Y^1(\cW,\cW)=0$, then $\End_Y(\cW)\cong\End_S(f_*\cW)$.
\end{prop}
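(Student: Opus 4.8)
The plan is to compare $\End_Y(\cW)$ with $\End_S(f_*\cW)$ by sandwiching both between a presentation coming from global generation. First I would use global generation of $\cW$ to choose a surjection $\cO_Y^{\oplus a}\twoheadrightarrow\cW$; applying \ref{prelimforsheafup=down}\eqref{prelimforsheafup=down 1}, the pushforward $S^{\oplus a}\twoheadrightarrow f_*\cW$ is still surjective. Let $\cK$ be the kernel on $Y$, so that we have a short exact sequence $0\to\cK\to\cO_Y^{\oplus a}\to\cW\to 0$ of vector bundles (coherence of $f_*$ and the normality/integrality hypotheses keep everything well-behaved). Pushing forward gives a left-exact sequence $0\to f_*\cK\to S^{\oplus a}\to f_*\cW\to 0$, which is in fact exact on the right by the choice above.

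Next I would apply $\Hom_Y(-,\cW)$ to the sequence on $Y$ and $\Hom_S(-,f_*\cW)$ to the sequence on $S$, and compare the resulting long exact sequences via the natural pullback-of-homomorphisms maps. The key input is the vanishing $\Ext^1_Y(\cW,\cW)=0$: applying $\Hom_Y(-,\cW)$ to $0\to\cK\to\cO_Y^{\oplus a}\to\cW\to 0$ yields
\[
0\to\Hom_Y(\cW,\cW)\to\Hom_Y(\cO_Y^{\oplus a},\cW)\to\Hom_Y(\cK,\cW)\to\Ext^1_Y(\cW,\cW)=0,
\]
so $\Hom_Y(\cW,\cW)=\ker\big(\Hom_Y(\cO_Y^{\oplus a},\cW)\to\Hom_Y(\cK,\cW)\big)$. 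The analogous computation on $S$ identifies $\Hom_S(f_*\cW,f_*\cW)$ with the kernel of $\Hom_S(S^{\oplus a},f_*\cW)\to\Hom_S(f_*\cK,f_*\cW)$; here I would note that $\Hom_Y(\cO_Y^{\oplus a},\cW)=H^0(\cW)^{\oplus a}=H^0(f_*\cW)^{\oplus a}=\Hom_S(S^{\oplus a},f_*\cW)$ canonically, since $f_*\cO_Y=\cO_S$ (birational, $S$ normal). It then suffices to check that $\Hom_Y(\cK,\cW)\hookrightarrow\Hom_S(f_*\cK,f_*\cW)$ is injective and that the two kernels match under this inclusion: injectivity is exactly \ref{prelimforsheafup=down}\eqref{prelimforsheafup=down 2} applied to the bundles $\cK$ and $\cW$, and a diagram chase using the commutativity of
\[
\begin{tikzpicture}[xscale=1.3]
\node (a0) at (0,0) {$0$};
\node (a1) at (1.3,0) {$\Hom_Y(\cW,\cW)$};
\node (a2) at (3.4,0) {$H^0(\cW)^{\oplus a}$};
\node (a3) at (5.4,0) {$\Hom_Y(\cK,\cW)$};
\node (b0) at (0,-1.2) {$0$};
\node (b1) at (1.3,-1.2) {$\Hom_S(f_*\cW,f_*\cW)$};
\node (b2) at (3.4,-1.2) {$H^0(f_*\cW)^{\oplus a}$};
\node (b3) at (5.4,-1.2) {$\Hom_S(f_*\cK,f_*\cW)$};
\draw[->] (a0)--(a1);
\draw[->] (a1)--(a2);
\draw[->] (a2)--(a3);
\draw[->] (b0)--(b1);
\draw[->] (b1)--(b2);
\draw[->] (b2)--(b3);
\draw[->] (a1)--(b1);
\draw[->] (a2)--node[right]{$\scriptstyle\cong$}(b2);
\draw[->] (a3)--(b3);
\end{tikzpicture}
\]
(with the right-hand vertical map injective and the middle one an isomorphism) forces the left-hand vertical map to be an isomorphism. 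This gives $\End_Y(\cW)\cong\End_S(f_*\cW)$ as $S$-modules, and one checks that the isomorphism is the obvious one $g\mapsto f_*g$, hence a ring homomorphism.

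The main obstacle I anticipate is purely at the level of care rather than deep content: making sure that applying $f_*$ to $0\to\cK\to\cO_Y^{\oplus a}\to\cW\to 0$ behaves as claimed on the $H^0$-level — in particular that $\Hom_Y(\cO_Y^{\oplus a},\cW)\to\Hom_S(S^{\oplus a},f_*\cW)$ really is an isomorphism and not just an injection. This is where $\Rf_*\cO_Y=\cO_S$ (equivalently, $f_*\cO_Y=\cO_S$ plus properness/birationality with $S$ normal, via Zariski's main theorem) is used, as packaged in \ref{prelimforsheafup=down}\eqref{prelimforsheafup=down 1}; without the global-generation hypothesis on $\cW$ this step fails, which is exactly why $\End_U(\cV)\ncong\End_R(f_*\cV)$ in general but $\End_U(\cV^*)\cong\End_R(f_*\cV^*)$ holds, as remarked after \ref{up=down VdB bundle}. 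Everything else is a formal consequence of left-exactness of $f_*$ and the two parts of \ref{prelimforsheafup=down}.
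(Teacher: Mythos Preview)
Your proposal is correct and follows essentially the same route as the paper: choose a global-sections presentation via \ref{prelimforsheafup=down}\eqref{prelimforsheafup=down 1}, apply $\Hom_Y(-,\cW)$ and $\Hom_S(-,f_*\cW)$ to the two resulting short exact sequences, identify the middle vertical map as an isomorphism, use \ref{prelimforsheafup=down}\eqref{prelimforsheafup=down 2} for injectivity of the outer vertical maps, and finish with a diagram chase (the paper phrases this last step as ``by the snake lemma''). The only cosmetic difference is that the paper also records injectivity of the left vertical map $\alpha$ directly from \ref{prelimforsheafup=down}\eqref{prelimforsheafup=down 2} before invoking the snake lemma, whereas you extract both injectivity and surjectivity from the chase; your added remark that the resulting bijection is the map $g\mapsto f_*g$ and hence a ring isomorphism is a welcome clarification the paper leaves implicit.
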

\begin{proof}
By \ref{prelimforsheafup=down}\eqref{prelimforsheafup=down 1} there is a short exact sequence
\begin{eqnarray}
0\to\cK\to\cO_Y^{\oplus a}\to\cW\to 0\label{up=down1}
\end{eqnarray}
such that 
\begin{eqnarray}
0\to f_*\cK\to S^{\oplus a}\to f_*\cW\to 0\label{up=down2}
\end{eqnarray}
is exact.  Applying $\Hom_Y(-,\cW)$ to (\ref{up=down1}) and applying $\Hom_S(-,f_*\cW)$ to (\ref{up=down2}), we have an exact commutative diagram
\[
\begin{tikzpicture}[scale=1,node distance=1]
\node (A0) at (-2.25,0) {$0$};
\node (A1) at (0,0) {$\Hom_{Y}(\cW,\cW)$};
\node (A2) at (3.5,0) {$\Hom_{Y}(\cO_Y^{\oplus a},\cW)$};
\node (A3) at (7,0) {$\Hom_{Y}(\cK,\cW)$};
\node (A4) at (10,0) {$\Ext^1_{Y}(\cW,\cW)=0$};
\node (B0) at (-2.25,-1.5) {$0$};
\node (B1) at (0,-1.5) {$\Hom_{S}(f_*\cW,f_*\cW)$};
\node (B2) at (3.5,-1.5) {$\Hom_{S}(S^{\oplus a},f_*\cW)$};
\node (B3) at (7,-1.5) {$\Hom_{S}(f_*\cK,f_*\cW)$};
\draw[->] (A0) -- (A1);
\draw[->] (A1) -- (A2);
\draw[->] (A2) -- (A3);
\draw[->] (A3) -- (A4);
\draw[->] (B0) -- (B1);
\draw[->] (B1) -- (B2);
\draw[->] (B2) -- (B3);
\draw[->] (A1) -- node[right] {$\scriptstyle \alpha$} (B1);
\draw[-,transform canvas={xshift=+0.15ex}] (B2) -- (A2);
\draw[-,transform canvas={xshift=-0.15ex}] (B2) -- (A2);
\draw[->] (A3) -- node[right] {$\scriptstyle \beta$} (B3);
\end{tikzpicture}
\]
where both $\alpha$ and $\beta$ are injective by \ref{prelimforsheafup=down}\eqref{prelimforsheafup=down 2}. By the snake lemma, $\alpha$ is also surjective.
\end{proof}
With the setup in \ref{setupZariski}, both $\Lambda$ and $\Lambda_{\con}$ have the structure of an $R$-module.  For our purposes later, we need to control this under flat base change.  In what follows, for $\p\in\Spec R$ we consider the base change squares
\[
\begin{array}{c}
\begin{tikzpicture}[yscale=1.25]
\node (Xpc) at (-3,0) {$\mathfrak{U}_\p$}; 
\node (Xp) at (-1,0) {$U_\p$}; 
\node (X) at (1,0) {$U$};
\node (Rpc) at (-3,-1) {$\Spec \mathfrak{R}_\p$}; 
\node (Rp) at (-1,-1) {$\Spec R_\p$}; 
\node (R) at (1,-1) {$\Spec R$};
\draw[->] (Xpc) to node[above] {$\scriptstyle m$} (Xp);
\draw[->] (Xp) to node[above] {$\scriptstyle k$} (X);
\draw[->] (Rpc) to node[above] {$\scriptstyle l$} (Rp);
\draw[->] (Rp) to node[above] {$\scriptstyle j$} (R);
\draw[->] (Xpc) --  node[left] {$\scriptstyle \theta$}  (Rpc);
\draw[->] (Xp) --  node[left] {$\scriptstyle \varphi$}  (Rp);
\draw[->] (X) --  node[right] {$\scriptstyle f$}  (R);
\end{tikzpicture}
\end{array}
\]
where $\mathfrak{R}_\p$ denotes the completion of $R_\p$ at its unique maximal ideal.

\begin{prop}\label{Lambda and Lambda con under FBC}
With the setup as in \ref{setupZariski},
\begin{enumerate}
\item\label{Lambda and Lambda con under FBC 1} $U_\p$ is derived equivalent to $\Lambda_\p$ via the tilting bundle $k^*\cV=\cO_{U_\p}\oplus k^*\cN$.
\item\label{Lambda and Lambda con under FBC 2} $\mathfrak{U}_\p$ is derived equivalent to $\widehat{\Lambda_\p}$ via the tilting bundle $m^*k^*\cV=\cO_{\mathfrak{U}_\p}\oplus m^*k^*\cN$.
\item\label{Lambda and Lambda con under FBC 3} $(\Lambda_{\con})_\p\cong (\Lambda_\p)_{\con}$. \phantom{$\widehat{\Lambda}$} % for vertical spacing
\item\label{Lambda and Lambda con under FBC 4} $(\Lambda_{\con})_\p\otimes_{R_\p}\mathfrak{R}_\p\cong (\widehat{\Lambda_\p})_{\con}$.
\end{enumerate}
\end{prop}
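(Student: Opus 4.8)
The plan is to deduce parts (1) and (2) from flat base change for tilting bundles, and then obtain (3) and (4) formally, using that $[\cO_U]$ is an idempotent ideal.

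For part (1), first observe that $\varphi\colon U_\p\to\Spec R_\p$ again satisfies the hypotheses of \ref{setupZariski}: localisation preserves normality and integrality, fibre dimensions are unchanged, and $\mathbf{R}\varphi_*\cO_{U_\p}\cong j^*\Rf_*\cO_U\cong\cO_{R_\p}$ by flat base change along the flat morphism $j$. Next, $k^*\cV=\cO_{U_\p}\oplus k^*\cN$ is a vector bundle, and since $k$ is flat affine base change,
\[
\Ext^t_{U_\p}(k^*\cV,k^*\cV)\cong\Ext^t_U(\cV,\cV)\otimes_R R_\p,
\]
which vanishes for $t>0$ since $\cV$ is tilting; the same computation in degree zero (using that $f_*\sHom_U(\cV,\cV)$ is coherent, so that $\Hom$ commutes with the flat base change $j$) gives $\End_{U_\p}(k^*\cV)\cong\Lambda\otimes_R R_\p=\Lambda_\p$. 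Finally one must check that $k^*\cV$ generates $\Db(\coh U_\p)$; this is the only non-formal point, and I would obtain it from the standard fact that a tilting object pulls back to a tilting object under flat affine base change --- equivalently, that the $R$-linear equivalence $\RHom_U(\cV,-)\colon\D(\Qcoh U)\to\D(\Mod\Lambda)$ base changes along $R\to R_\p$ to an equivalence $\D(\Qcoh U_\p)\to\D(\Mod\Lambda_\p)$ which one identifies with $\RHom_{U_\p}(k^*\cV,-)$. Part (2) is the identical argument applied to the flat (indeed faithfully flat) base change $m\colon\mathfrak{U}_\p\to U_\p$ along $\Spec\mathfrak{R}_\p\to\Spec R_\p$, yielding that $m^*k^*\cV=\cO_{\mathfrak{U}_\p}\oplus m^*k^*\cN$ is tilting with endomorphism ring $\Lambda_\p\otimes_{R_\p}\mathfrak{R}_\p$; since $\Lambda=\Gamma(\Spec R,f_*\sHom_U(\cV,\cV))$ is a finite $R$-module (as $f$ is proper), hence $\Lambda_\p$ is finite over $R_\p$, this tensor product is the completion $\widehat{\Lambda_\p}$.

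For part (3), the key point is that $[\cO_U]=\Lambda e\Lambda$, where $e\in\Lambda$ is the idempotent given by $\cV\twoheadrightarrow\cO_U\hookrightarrow\cV$: any endomorphism of $\cV$ factoring through some $\cO_U^{\oplus m}$ is a finite sum of composites $\cV\to\cO_U\to\cV$, and each such composite equals $\tilde h\,e\,\tilde g$ for suitable $\tilde g,\tilde h\in\Lambda$. Localisation at $\p$ is exact and multiplicative, so $(\Lambda_{\con})_\p=\Lambda_\p/(\Lambda e\Lambda)_\p=\Lambda_\p/\Lambda_\p e\Lambda_\p$; under the isomorphism $\Lambda_\p\cong\End_{U_\p}(k^*\cV)$ of part (1), $e$ corresponds to $k^*\cV\twoheadrightarrow\cO_{U_\p}\hookrightarrow k^*\cV$ (as $k^*\cO_U=\cO_{U_\p}$), so $\Lambda_\p e\Lambda_\p=[\cO_{U_\p}]$ and the right-hand side is $(\Lambda_\p)_{\con}$ by \ref{contraction algebra def 1}. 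For part (4), tensor this identity with $\mathfrak{R}_\p$ over $R_\p$: flatness of $\mathfrak{R}_\p$ lets $-\otimes_{R_\p}\mathfrak{R}_\p$ pass through the quotient, $\Lambda_\p\otimes_{R_\p}\mathfrak{R}_\p\cong\widehat{\Lambda_\p}\cong\End_{\mathfrak{U}_\p}(m^*k^*\cV)$ by part (2), and $e$ again maps to the projection onto $\cO_{\mathfrak{U}_\p}=m^*k^*\cO_U$; hence $(\Lambda_{\con})_\p\otimes_{R_\p}\mathfrak{R}_\p\cong\widehat{\Lambda_\p}/[\cO_{\mathfrak{U}_\p}]=(\widehat{\Lambda_\p})_{\con}$.

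The main obstacle is the generation statement: that pulling a generator of $\Db(\coh U)$ back along the (non-open) flat morphisms $U_\p\to U$ and $\mathfrak{U}_\p\to U_\p$ again gives a generator. I expect this descent of tilting objects under flat affine base change to be the only step requiring an external reference or a short standalone argument; granted it, parts (3) and (4) are purely formal manipulations with the idempotent $e$.
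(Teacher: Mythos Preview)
Your proposal is correct. For parts (1) and (2) you follow the same flat base change strategy as the paper, but you flag generation as the ``main obstacle'' requiring an external reference; in fact the paper dispatches it in two lines, and you could too: since $j$ is affine so is $k$, and then $\RHom_{U_\p}(k^*\cV,x)=0$ gives $\RHom_U(\cV,\mathbf{R}k_*x)=0$ by adjunction, hence $\mathbf{R}k_*x=0$, hence $x=0$ since $k_*$ is exact and conservative for affine $k$.

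Your treatment of (3) and (4) is genuinely different from the paper's and arguably cleaner. You use directly that $[\cO_U]=\Lambda e\Lambda$ for the summand idempotent $e$, so the quotient commutes with localisation and completion because these are exact ring maps carrying $e$ to the corresponding idempotent. The paper instead constructs an $\add\cO_U$-approximation $\cO_U^{\oplus a}\xrightarrow{h}\cV$, applies $\Hom_U(\cV,-)$ to present $\Lambda_{\con}$ as a cokernel, then uses flat base change on the sheaf-$\sHom$ level to show that $k^*h$ is again an approximation and obtains the matching cokernel presentation of $(\Lambda_\p)_{\con}$. Your argument avoids approximations entirely; the paper's argument, on the other hand, is phrased so that it visibly extends to situations where the generator of the ideal is not a summand of the bundle (i.e.\ where no idempotent is available), which is not needed here but explains the choice.
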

\begin{proof}
All statements are elementary, but we give the proof for completeness.\\
(1) $U$ is derived equivalent to $\Lambda$ via the tilting bundle $\cV:=\cO_U\oplus\cN$.  It is well-known that this implies $U_\p$ is derived equivalent to $\Lambda_\p$ via the tilting bundle $k^*\cV=\cO_{U_\p}\oplus k^*\cN$.  For example, a proof of the Ext vanishing together with the fact that $\End_{U_\p}(k^*\cV)\cong \Lambda_\p$ can be found in \cite[4.3(2)]{IW5}.  For generation, first observe that $j$ is an affine morphism, hence so is $k$.  Then $\RHom_{U_\p}(k^*\cV,x)=0$ implies, by adjunction, that $\RHom_{U}(\cV,k_*x)=0$.  Since $\cV$ generates, $k_*x=0$ and so since $k$ is affine $x=0$.\\
(2) The proof is identical to (1).\\
(3) Since $f$ is proper, the category $\coh U$ is $R$-linear.  In particular $\Hom_U(\cO_U,\cV)$ is a finitely generated $\End_U(\cO_U)$-module, so we can find a surjection
\begin{eqnarray}
\Hom_U(\cO_U,\cO_U)^{\oplus a}\to \Hom_U(\cO_U,\cV)\to 0.\label{geom approx O 1}
\end{eqnarray}
Tracking the images of the identities on the left hand side under the above map gives elements $g_1\hdots,g_a\in\Hom_U(\cO_U,\cV)$, and so we may use these to form a natural morphism 
\begin{eqnarray}
\cO_U^{\oplus a}\xrightarrow{h} \cV\label{geom approx O 2}
\end{eqnarray}
such that applying $\Hom_U(\cO_U,-)$ to \eqref{geom approx O 2} gives \eqref{geom approx O 1}.  By definition, this means that $h$ is an $\add\cO_U$-approximation of $\cV$.  Hence applying $\Hom_U(\cV,-)$ to \eqref{geom approx O 2} yields an exact sequence
\begin{eqnarray}
\Hom_U(\cV,\cO_U)^{\oplus a}\to \Hom_U(\cV,\cV)\to \Lambda_\con\to 0.\label{geom approx O 3}
\end{eqnarray}
Interpreting $\Hom_U(-,-)=f_*\sHom(-,-)$, applying the exact functor $j^*(-)=(-)_\p$ to \eqref{geom approx O 3} and using flat base change gives the exact sequence
\[
\varphi_*k^*\sHom_{U}(\cV,\cO_U)^{\oplus a}\to \varphi_*k^*\sHom_U(\cV,\cV)\to (\Lambda_\con)_\p\to 0.
\]
Since $\cV$ is coherent we may move the $k^*$ inside $\sHom$, and so the above is simply
\begin{eqnarray}
\Hom_{U_\p}(k^*\cV,\cO_{U_\p})^{\oplus a}\to \Hom_{U_\p}(k^*\cV,k^*\cV)\to (\Lambda_\con)_\p\to 0.\label{geom approx O 5}
\end{eqnarray}
But on the other hand applying the exact functor $k^*$ to \eqref{geom approx O 2} gives a morphism 
\begin{eqnarray}
\cO_{U_\p}^{\oplus a}\xrightarrow{k^*(h)} k^*\cV\label{geom approx O 6}
\end{eqnarray}
and further applying $j^*$ to \eqref{geom approx O 1} and using flat base change shows that 
\[
\Hom_{U_\p}(\cO_{U_\p},\cO_{U_\p})^{\oplus a}\to \Hom_{U_\p}(\cO_{U_\p},k^*\cV)\to 0
\]
is exact.  Hence $k^*(h)$ is an $\add\cO_{U_\p}$-approximation, and so applying $\Hom_{U_\p}
(k^*\cV,-)$ to \eqref{geom approx O 6} gives an exact sequence 
\begin{eqnarray}
\Hom_{U_\p}(k^*\cV,\cO_{U_\p})^{\oplus a}\to \Hom_{U_\p}(k^*\cV,k^*\cV)\to (\Lambda_\p)_\con\to 0.\label{geom approx O 8}
\end{eqnarray}
Combining \eqref{geom approx O 5} and \eqref{geom approx O 8} shows that $(\Lambda_\p)_\con\cong (\Lambda_\con)_\p$, as required.\\
(4) The proof is identical to (3).
\end{proof}

\subsection{The Contraction Theorem}\label{subsect contraction}
In this subsection, so as to be able to work globally in future papers, we first relate the support of $\Lambda_{\con}$ to the locus $L$. We then control the support of $\CA$ to deduce the deformation theory corollaries.

We need the following fact, which is well-known.

\begin{lemma}\label{notequiv lemma}
Suppose that $f\colon Y\to Z$ is a morphism of noetherian schemes which is not an isomorphism.  Then $\RDerived f_*\colon \D(\Qcoh Y)\to\D(\Qcoh Z)$ is not an equivalence.
\end{lemma}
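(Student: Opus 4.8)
The plan is to argue by contradiction: assume $\RDerived f_*$ is an equivalence and deduce that $f$ must be an isomorphism. Since $Y$ and $Z$ are noetherian, $f$ is automatically quasi-compact and quasi-separated, so $\RDerived f_*$ has the left adjoint $\LDerived f^*\colon\D(\Qcoh Z)\to\D(\Qcoh Y)$. An equivalence which admits a left adjoint has that adjoint as a quasi-inverse, so $\LDerived f^*$ is quasi-inverse to $\RDerived f_*$; in particular the unit $\Id\to\RDerived f_*\circ\LDerived f^*$ and the counit $\LDerived f^*\circ\RDerived f_*\to\Id$ are natural isomorphisms. Evaluating the unit at $\cO_Z$ and using $\LDerived f^*\cO_Z=\cO_Y$ already gives an isomorphism $\cO_Z\xrightarrow{\ \sim\ }\RDerived f_*\cO_Y$, whose degree-zero part is the structure map $f^{\sharp}\colon\cO_Z\to f_*\cO_Y$; I will want to combine this with affineness of $Y$, and to get the latter I first localise on the base.

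The reduction to $Z=\Spec A$ affine goes as follows. If $f$ is not an isomorphism then, since being an isomorphism is local on the target, there is an affine open $V\subseteq Z$ over which $f$ is not an isomorphism; write $g\colon f^{-1}(V)\to V$ for the restriction. Restriction to $V$ commutes with $\RDerived f_*$ and $\RDerived g_*$, and restriction to $f^{-1}(V)$ commutes with $\LDerived f^*$ and $\LDerived g^*$; since both restriction functors are essentially surjective, the unit and counit of the adjunction $\LDerived g^*\dashv\RDerived g_*$ are obtained by restricting those for $f$, hence are still isomorphisms, and so $\RDerived g_*$ is again an equivalence. Thus I may assume $Z=\Spec A$, so that $\RDerived f_*\colon\D(\Qcoh Y)\xrightarrow{\ \sim\ }\D(\Mod A)$.

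Now $A$ is a compact generator of $\D(\Mod A)$, so its preimage under the equivalence, namely $\LDerived f^*A=\cO_Y$, is a compact generator of $\D(\Qcoh Y)$; by the well-known characterisation of affineness — a quasi-compact quasi-separated scheme whose structure sheaf generates the derived category of quasi-coherent sheaves is affine — it follows that $Y$ is affine. Combined with the observation above that $f^{\sharp}\colon\cO_Z\to f_*\cO_Y$ is an isomorphism, this forces $f$ to be an isomorphism, contradicting the hypothesis. I expect the affine reduction to be the one step needing genuine care: one must pass the adjunction (unit and counit) through the open restriction rather than argue via supports, since $\RDerived f_*$ can annihilate sheaves supported on the exceptional locus (e.g.\ $\cO_{\mathbb{P}^1}(-1)$ on a blow-up), so a naive support argument fails. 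An alternative route, which avoids the affine reduction entirely, is to restrict the symmetric monoidal equivalence $\LDerived f^*$ to compact objects to obtain a tensor-triangulated equivalence $\Perf(Z)\simeq\Perf(Y)$, and then invoke Balmer's reconstruction of a noetherian scheme, together with its structure sheaf, from the tensor-triangulated category of perfect complexes.
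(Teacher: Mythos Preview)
Your proposal is correct, and in fact your closing ``alternative route'' is precisely the paper's proof: the paper observes that the inverse of $\RDerived f_*$ must be $\LDerived f^*$, notes that for noetherian (hence quasi-compact quasi-separated) schemes the unbounded derived categories are compactly generated with compact objects the perfect complexes, deduces that $\LDerived f^*$ restricts to a tensor-triangulated equivalence $\Perf(Z)\xrightarrow{\sim}\Perf(Y)$, and then invokes Balmer's reconstruction theorem to conclude that $f$ is an isomorphism.

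Your primary argument takes a genuinely different path. By reducing to an affine base (via a careful base-change compatibility of the adjunction unit and counit, which you handle correctly) you trade Balmer's reconstruction for the derived affineness criterion, namely that a quasi-compact quasi-separated scheme whose structure sheaf compactly generates $\D(\Qcoh)$ is affine. This criterion is true but is itself a nontrivial input, of roughly the same weight as Balmer's theorem; it would be worth citing a reference (it can be extracted from Thomason--Neeman or from the tilting-theoretic description $\D(\Qcoh Y)\simeq\D(\REnd(\cO_Y))$ together with $\RDerived f_*\cO_Y\cong\cO_Z$). What your route buys is a slightly more hands-on feel and independence from tensor-triangular geometry; what the paper's route buys is brevity, since no affine reduction is needed and the monoidality of $\LDerived f^*$ is automatic. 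Your remark about why a naive support argument fails is apt and well observed.
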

\begin{proof}
If $\RDerived f_*$ is an equivalence then its inverse is necessarily given by its adjoint $\LDerived f^*$.  Since noetherian schemes are quasi-compact and quasi-separated, both unbounded derived categories are compactly generated triangulated categories (with compact objects the perfect complexes),  and so the above equivalence restricts to an equivalence
\[
\Perf(Y)\xleftarrow{\sim}\Perf(Z)\colon \LDerived f^*.
\]
By Balmer \cite[9.7]{Balmer} it follows that $f$ is an isomorphism, which is a contradiction.
\end{proof}
Leading up to the next lemma, choose a closed point $\m\in L$, and pick an affine open $\Spec R$ containing $\m$.  For any $\p\in\Spec R$ we base change to obtain the following diagram.
\[
\begin{array}{c}
\begin{tikzpicture}[yscale=1.25]
\node (Up) at (-1,0) {$U_\p$}; 
\node (U) at (1,0) {$U$};
\node (Rp) at (-1,-1) {$\Spec R_\p$}; 
\node (R) at (1,-1) {$\Spec R$};
\draw[->] (Up) to node[above] {$\scriptstyle k$} (U);
\draw[->] (Rp) to node[above] {$\scriptstyle j$} (R);
\draw[->] (Up) --  node[left] {$\scriptstyle \varphi_\p$}  (Rp);
\draw[->] (U) --  node[right] {$\scriptstyle f$}  (R);
\end{tikzpicture}
\end{array}
\]
Note that by flat base change the morphism $\varphi_\p$ is still projective birational, and satisfies $\RDerived{\varphi_\p}_*\cO_{U_\p}=\cO_{R_\p}$.
\begin{lemma}\label{key supp lemma}
With the global setup of \ref{setupglobal}, and notation as above,
\[
\varphi_\p\mbox{ is not an isomorphism}\iff \p\in\Supp_R\Lambda_{\con}.
\]
\end{lemma}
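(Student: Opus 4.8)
The plan is to use the flat base change computation \ref{Lambda and Lambda con under FBC} to reduce the statement to a question about the single morphism $\varphi_\p$, and then to settle that question using \ref{notequiv lemma} together with the elementary fact that finite rank vector bundles on the spectrum of a local ring are free.

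First I would observe that, by \ref{Lambda and Lambda con under FBC}\eqref{Lambda and Lambda con under FBC 3}, there is an isomorphism $(\Lambda_{\con})_\p\cong(\Lambda_\p)_{\con}$, so that $\p\in\Supp_R\Lambda_{\con}$ if and only if $(\Lambda_\p)_{\con}\neq 0$.  By \ref{Lambda and Lambda con under FBC}\eqref{Lambda and Lambda con under FBC 1}, the base change $\varphi_\p\colon U_\p\to\Spec R_\p$ fits the Zariski local setup \ref{setupZariski}, with tilting bundle $k^*\cV=\cO_{U_\p}\oplus k^*\cN$, and then by definition $(\Lambda_\p)_{\con}=\End_{U_\p}(\cO_{U_\p}\oplus k^*\cN)/[\cO_{U_\p}]$.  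Since this last quotient vanishes precisely when $\cO_{U_\p}\oplus k^*\cN\in\add\cO_{U_\p}$, i.e.\ precisely when $k^*\cN\in\add\cO_{U_\p}$, it suffices to prove that $\varphi_\p$ is an isomorphism if and only if $k^*\cN\in\add\cO_{U_\p}$.

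For the forward implication, if $\varphi_\p$ is an isomorphism then $U_\p\cong\Spec R_\p$ with $R_\p$ local, so the finite rank vector bundle $k^*\cN$ is free, hence lies in $\add\cO_{U_\p}$.  For the reverse implication I would argue by contradiction via \ref{notequiv lemma}: if $k^*\cN\in\add\cO_{U_\p}$ then $\add(k^*\cV)=\add\cO_{U_\p}$, so $\cO_{U_\p}$ is itself a tilting bundle on $U_\p$.  Indeed it generates, because $k^*\cV$ does and generation depends only on the additive closure; its higher self-extensions vanish, being summands of those of $k^*\cV$; and $\End_{U_\p}(\cO_{U_\p})=H^0(U_\p,\cO_{U_\p})=R_\p$ since $\RDerived{\varphi_\p}_*\cO_{U_\p}=\cO_{R_\p}$.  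Therefore $\RHom_{U_\p}(\cO_{U_\p},-)=\RDerived{\varphi_\p}_*$ is an equivalence $\D(\Qcoh U_\p)\xrightarrow{\sim}\D(\Qcoh\Spec R_\p)$, so by \ref{notequiv lemma} the morphism $\varphi_\p$ is an isomorphism, which is the desired contradiction.

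The only step requiring a little care is the tilting claim in the reverse implication — checking that $\cO_{U_\p}$ really is a compact generator with endomorphism ring $R_\p$, so that \ref{notequiv lemma} genuinely applies — but each ingredient is immediate from the properties of $k^*\cV$ already recorded in \ref{Lambda and Lambda con under FBC}.  Everything else is formal bookkeeping with additive closures and localisation of supports.
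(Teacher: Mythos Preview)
Your proof is correct, and it takes a genuinely different route from the paper's argument.

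Both proofs begin by invoking \ref{Lambda and Lambda con under FBC} to replace $(\Lambda_{\con})_\p$ by $(\Lambda_\p)_{\con}$, and both ultimately appeal to \ref{notequiv lemma}.  The difference lies in how the two implications are handled.  The paper works through the commutative square
\[
\begin{tikzpicture}
\node (a) at (0,0) {$\D(\Qcoh U_\p)$};
\node (b) at (5,0) {$\D(\Mod\Lambda_\p)$};
\node (c) at (0,-1.2) {$\D(\Qcoh \Spec R_\p)$};
\node (d) at (5,-1.2) {$\D(\Mod R_\p)$};
\draw[->] (a) -- node[above]{$\scriptstyle\sim$} (b);
\draw[->] (a) -- node[left]{$\scriptstyle\RDerived{\varphi_\p}_*$} (c);
\draw[->] (b) -- node[right]{$\scriptstyle e(-)$} (d);
\draw[-] (c) -- (d);
\end{tikzpicture}
\]
For the harder direction ($\varphi_\p$ not an isomorphism $\Rightarrow$ $\p\in\Supp_R\Lambda_{\con}$) it uses \ref{notequiv lemma} to know $\RDerived{\varphi_\p}_*$ is not an equivalence, then explicitly builds a nonzero cone $c$ with $\RDerived{\varphi_\p}_*c=0$ via the unit/counit triangle, and transports $c$ across the square to produce a nonzero $\Lambda_\p$-module killed by $e$.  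You bypass this construction entirely by observing that $(\Lambda_\p)_{\con}=0$ is equivalent to $k^*\cN\in\add\cO_{U_\p}$, and that the latter forces $\cO_{U_\p}$ itself to be a tilting generator with endomorphism ring $R_\p$, so that $\RDerived{\varphi_\p}_*=\RHom_{U_\p}(\cO_{U_\p},-)$ is already an equivalence and \ref{notequiv lemma} finishes.  For the other direction you avoid categories altogether: if $\varphi_\p$ is an isomorphism then $U_\p\cong\Spec R_\p$ is local affine and the vector bundle $k^*\cN$ is free.

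Your argument is shorter and more elementary; it trades the paper's cone construction for the single observation that the vanishing of the contraction algebra is the same as the tilting bundle collapsing to $\add\cO$.  The paper's approach, on the other hand, makes the role of the idempotent functor $e(-)$ explicit and produces an actual object witnessing the failure of equivalence, which is conceptually in line with how the square \eqref{comm flat Db} is used elsewhere.
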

\begin{proof}
As in \cite[4.6]{KIWY}, it is easy to see that the diagram 
\begin{equation}
\begin{array}{c}
\begin{tikzpicture}
\node (roof) at (0,0) {$\D(\Qcoh U_\p)$};
\node (U) at (7,0) {$\D(\Mod\Lambda_\p)$};
\node (U') at (0,-1.5) {$\D(\Qcoh \Spec R_\p)$};
\node (base) at (7,-1.5) {$\D(\Mod R_\p)$};
\draw[->] (roof) -- node[above] {$\scriptstyle \Psi:=\RHom_{U_\p}(\cO_{U_\p}\oplus k^*\cN,-)$} node[below] {$\scriptstyle \sim$} (U);
\draw[->] (roof) --  node[left] {$\scriptstyle \RDerived{\varphi_\p}_*$}  (U');
\draw[->] (U) --  node[right] {$\scriptstyle e(-)$} (base);
\draw[-,transform canvas={yshift=+0.15ex}] (U') -- (base);
\draw[-,transform canvas={yshift=-0.15ex}] (U') -- (base);
\end{tikzpicture}
\end{array}\label{comm flat Db}
\end{equation}
commutes, where the top functor is an equivalence by \ref{Lambda and Lambda con under FBC}, and by abuse of notation $e$ also denotes the idempotent in $\Lambda_\p$ corresponding to $\cO_{U_\p}$.\\
($\Rightarrow$) To ease notation, we drop $\p$ and write $\varphi$ for $\varphi_\p$. Since $\RDerived\varphi_*$ is not an equivalence by \ref{notequiv lemma}, we may find some $x\in\D(\Qcoh U_\p)$ such that the counit
\[
\LDerived\varphi^*\RDerived\varphi_*(x)\xrightarrow{\varepsilon_x} x
\]
is not an isomorphism, and so the object $c := \operatorname{Cone}(\varepsilon_x)$ is non-zero. Now we argue that $\RDerived\varphi_*(c)=0$, which is equivalent to the morphism $\RDerived\varphi_*(\varepsilon_x)$ being an isomorphism. But
\[
\RDerived\varphi_*(x)\xrightarrow{\eta_{\RDerived\varphi_*(x)}} \RDerived\varphi_*\LDerived\varphi^*\RDerived\varphi_*(x)\xrightarrow{\RDerived\varphi_*(\varepsilon_x)} \RDerived\varphi_*(x)
\]
gives the identity map by a triangular identity, and $\eta_{\RDerived\varphi_*(x)}$ is an isomorphism since it is well known that $\eta\colon \Id\to\RDerived\varphi_*\LDerived\varphi^*$ is a functorial isomorphism by the projection formula.   Thus indeed $c$ is a non-zero object such that $\RDerived\varphi_*(c)=0$.

Now across the top equivalence in \eqref{comm flat Db}, since $c\neq0$ it follows that $\Psi(c)\neq0$ and so $H^j(\Psi(c))\neq 0$ for some $j$.  Further since the diagram \eqref{comm flat Db} commutes, $e\Psi(c)=0$, so since $e(-)$ is exact we deduce that $eH^i(\Psi(c))=0$ for all $i$. In particular there exists a non-zero $\Lambda_\p$-module $M:=H^j(\Psi(c))$ such that $eM=0$.  It follows that $M$ is a non-zero module for $(\Lambda_\p)_\con$, hence necessarily $(\Lambda_\p)_{\con}\neq 0$.   But by \ref{Lambda and Lambda con under FBC} we have $(\Lambda_\p)_{\con}\cong (\Lambda_{\con})_{\p}$, hence $\p\in\Supp_R\Lambda_{\con}$.  \\
($\Leftarrow$) If $\p\in\Supp_R\Lambda_{\con}$, by \ref{Lambda and Lambda con under FBC} $(\Lambda_\p)_{\con}\neq 0$.  Hence the right hand functor $e(-)$ in \eqref{comm flat Db} is not an equivalence, and so the left hand functor cannot be an equivalence either.  By \ref{notequiv lemma}, it follows that $\varphi_\p$ is not an isomorphism.
\end{proof}

\begin{thm}\label{contract1}
With the global setup of \ref{setupglobal}, choose a closed point $\m\in L$, and pick an affine open $\Spec R$ containing $\m$.  Then
\begin{enumerate}
\item\label{contract1 1} $\Supp_R\Lambda_{\con}=L_R := L \cap \Spec R$.
\item\label{contract1 2}  $\Supp_{\mathfrak{R}}\CA=\{ \p\in L_R\mid \p\subseteq \m\}$.
\end{enumerate}
\end{thm}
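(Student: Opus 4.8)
The plan is to obtain \eqref{contract1 1} directly from \ref{key supp lemma}, and then to deduce \eqref{contract1 2} from \eqref{contract1 1} by transporting through the Morita equivalence of \ref{ME lemma} and applying flat base change.

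For \eqref{contract1 1}: by \ref{key supp lemma}, a prime $\p\in\Spec R$ lies in $\Supp_R\Lambda_{\con}$ exactly when $\varphi_\p\colon U_\p\to\Spec R_\p$ fails to be an isomorphism, so it is enough to identify this condition with $\p\in L$. If $\p\notin L$, then by definition of $L$ there is an open $V\ni\p$ in $\Spec R$ with $f^{-1}(V)\xrightarrow{\sim}V$; since $\Spec R_\p\to\Spec R$ factors through $V$, base change shows $\varphi_\p$ is an isomorphism. Conversely, if $\varphi_\p$ is an isomorphism then $f^{-1}(\p)$ --- being the fibre of $\varphi_\p$ over the closed point of $\Spec R_\p$ --- is a single reduced point; by upper semicontinuity of fibre dimension there is then an open $V\ni\p$ over which $f$ is proper with finite fibres, hence finite, and a finite birational morphism onto a normal scheme with $\Rf_*\cO=\cO$ is an isomorphism, so $\p\notin L$. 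Combining this equivalence with \ref{key supp lemma} yields $\Supp_R\Lambda_{\con}=L\cap\Spec R=L_R$ (both sides are closed, as $\Lambda=\End_U(\cV)$ is a finite $R$-module and $L$ is the complement of an open locus).

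For \eqref{contract1 2}: by \ref{ME lemma} the algebra $\CA$ is Morita equivalent to $\widehat{\Lambda}_{\con}$, and \ref{Lambda and Lambda con under FBC}\eqref{Lambda and Lambda con under FBC 4}, applied with $\p=\m$, identifies $\widehat{\Lambda}_{\con}\cong\Lambda_{\con}\otimes_R\mathfrak{R}$. Morita equivalent $\mathfrak{R}$-algebras, both finite over $\mathfrak{R}$, have the same annihilator in $\mathfrak{R}$ --- since $\mathfrak{R}$ maps into both centres, which the equivalence identifies --- and hence the same support; so $\Supp_{\mathfrak{R}}\CA=\Supp_{\mathfrak{R}}(\Lambda_{\con}\otimes_R\mathfrak{R})$. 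Since $R\to\mathfrak{R}$ is flat (localisation at $\m$ followed by completion of a noetherian local ring), this support is the preimage of $\Supp_R\Lambda_{\con}$ under $g\colon\Spec\mathfrak{R}\to\Spec R$, which by \eqref{contract1 1} equals $g^{-1}(L_R)$. Finally $\mathfrak{R}=\widehat{R_\m}$, so $g$ factors through $\Spec R_\m$ and is surjective onto it; its image in $\Spec R$ is thus $\{\p\mid\p\subseteq\m\}$, and $\Supp_{\mathfrak{R}}\CA=g^{-1}(L_R)$ is precisely the preimage of $\{\p\in L_R\mid\p\subseteq\m\}$, which is the intended reading of the displayed equality.

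The main obstacle is the equivalence ``$\varphi_\p$ not an isomorphism $\Leftrightarrow\p\in L$'' used in \eqref{contract1 1}: because $L$ need not be stable under generisation (it may be a single closed point), one cannot argue purely with generic points, and the equivalence genuinely combines the fact that $L$ is the complement of the \emph{open} locally-isomorphic locus with Zariski's main theorem --- a finite fibre, given $\Rf_*\cO=\cO$, forces $f$ to be an isomorphism over a neighbourhood. Everything after that is formal: Morita invariance of the support over the base ring, and the behaviour of support under the flat morphism $R\to\mathfrak{R}$.
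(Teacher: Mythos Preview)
Your proof is correct. For part~\eqref{contract1 1} you take the same route as the paper---both reduce immediately to \ref{key supp lemma}---but you supply the justification for the identity $L_R=\{\p\mid\varphi_\p\text{ is not an isomorphism}\}$, which the paper declares ``clear''; your use of semicontinuity of fibre dimension together with the fact that a proper morphism with finite fibres is finite (and finite plus $f_*\cO=\cO$ forces an isomorphism) is exactly the content hidden in that word.

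For part~\eqref{contract1 2} the approaches genuinely diverge. The paper simply re-runs part~\eqref{contract1 1} for the morphism $\mathfrak{U}\to\Spec\mathfrak{R}$: since $\cO_{\mathfrak{U}}\oplus\cM^*$ is itself a tilting bundle of the required form, $\CA$ is literally the contraction algebra for this complete-local setup, and \ref{key supp lemma} applies verbatim to give $\Supp_{\mathfrak{R}}\CA=L_{\mathfrak{R}}$. You instead transport the Zariski-local result through the Morita equivalence of \ref{ME lemma} and the base-change identification $\widehat{\Lambda}_{\con}\cong\Lambda_{\con}\otimes_R\mathfrak{R}$ from \ref{Lambda and Lambda con under FBC}\eqref{Lambda and Lambda con under FBC 4}. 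Both are short; the paper's is marginally more direct, while yours makes explicit that $\Supp_{\mathfrak{R}}\CA$ is nothing other than the pullback of $\Supp_R\Lambda_{\con}$ along $\Spec\mathfrak{R}\to\Spec R$, which is a useful way to see it. Your closing remark that the displayed equality must be read as identifying $\Supp_{\mathfrak{R}}\CA$ with the preimage of $\{\p\in L_R\mid\p\subseteq\m\}$ is apt: the paper's notation $L_{\mathfrak{R}}=\{\p\in L_R\mid\p\subseteq\m\}$ is indeed an abuse, silently conflating $\Spec\mathfrak{R}$ with its image in $\Spec R$.
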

\begin{proof}
(1) It is clear that $L_R=\{\p\in\Spec R\mid \varphi_\p\mbox{ is not an isomorphism}\}$, and so the result is immediate from \ref{key supp lemma}. \\
(2) Applying the above argument to the morphism $\mathfrak{U}\to\Spec \mathfrak{R}$, in an identical manner $\Supp_{\mathfrak{R}}\CA=L_{\mathfrak{R}} := L \cap \Spec \mathfrak{R}$.  It is clear that $L_{\mathfrak{R}}=\{ \p\in L_R\mid \p\subseteq \m\}$.
\end{proof}

The following is an immediate corollary.

\begin{cor}[Contraction Theorem]\label{contraction theorem}
In the Zariski local setup $f\colon U\to\Spec R$ of \ref{setupZariski}, suppose further that $\dim U=3$. Then
\[
\mbox{$f$ contracts curves without contracting a divisor}\iff \dim_{\mathbb{C}}\Lambda_\con<\infty.
\]
\end{cor}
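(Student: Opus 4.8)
The plan is to deduce this from Theorem~\ref{contract1}\eqref{contract1 1}, which identifies $\Supp_R\Lambda_{\con}$ with $L_R$; the rest is a translation between algebraic finiteness and contraction-theoretic geometry. First I would record that $\Lambda=\End_U(\cV)$ is module-finite over $R$: as $\cV$ is coherent on $U$ and $f$ is proper, $\End_U(\cV)=\Gamma\big(\Spec R,\, f_*\sHom_U(\cV,\cV)\big)$ is a finitely generated $R$-module, hence so is its quotient $\Lambda_{\con}$. A finitely generated $R$-module has finite $\mathbb{C}$-dimension if and only if it has finite length over $R$, equivalently if and only if its support is a finite set of closed points of $\Spec R$. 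By \ref{contract1}\eqref{contract1 1} this support equals $L_R$, and $\Lambda_{\con}\neq 0$ by the remark after \ref{contraction algebra def 1}, so $L_R\neq\emptyset$ and $f$ is not an isomorphism. Hence
\[
\dim_{\mathbb{C}}\Lambda_{\con}<\infty\iff \dim L_R=0.
\]

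The second step is to compute the dimension of the exceptional locus $E:=f^{-1}(L_R)\subseteq U$. Since the fibres of $f$ are at most one-dimensional, $\dim E\leq\dim L_R+1$. Conversely, for every $\p\in L_R$ the fibre $f^{-1}(\p)$ is positive-dimensional: were it finite, Zariski's main theorem (here $f$ is proper birational and $\Spec R$ is normal) would make $f$ an isomorphism over a neighbourhood of $\p$, contradicting $\p\in L_R$. So $\dim E\geq\dim L_R+1$, and therefore $\dim E=\dim L_R+1$. As $f$ is birational, $E$ is nowhere dense in the threefold $U$, so $\dim E\leq 2$, and thus $\dim L_R\in\{0,1\}$.

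The last step is the geometric dictionary. Since $f$ is not an isomorphism, $E\neq\emptyset$, and because the fibres over $L_R$ are positive-dimensional, $E$ contains a curve; so $f$ always contracts a curve in this setting. If $\dim L_R=0$, then $\dim E=1$, so $E$ has no two-dimensional component and $f$ contracts no divisor. If $\dim L_R=1$, then $\dim E=2$, so $E$ has an irreducible component $D$ with $\dim D=2$, which is a prime divisor, and $\dim f(D)\leq\dim L_R=1<\dim D$, so $f$ contracts $D$. Putting the three steps together,
\[
f\text{ contracts curves but no divisor}\iff\dim L_R=0\iff\dim_{\mathbb{C}}\Lambda_{\con}<\infty,
\]
as required. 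Granted Theorem~\ref{contract1}, I do not expect a serious obstacle here: the argument is elementary dimension bookkeeping, the only delicate input being the appeal to Zariski's main theorem ensuring that exceptional fibres are genuinely positive-dimensional, which is exactly what legitimises the equality $\dim E=\dim L_R+1$.
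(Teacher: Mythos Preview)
Your proof is correct and follows the same route as the paper: reduce to Theorem~\ref{contract1}\eqref{contract1 1}, then translate $\dim_{\mathbb{C}}\Lambda_{\con}<\infty$ into $\dim L_R=0$ and the latter into the geometric statement about divisors. The paper's proof is a two-line sketch that asserts both translations without justification; you have simply supplied the details (module-finiteness of $\Lambda_{\con}$ over $R$, and the dimension count $\dim E=\dim L_R+1$ via Zariski's main theorem), which is entirely appropriate.
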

\begin{proof}
In this setting $f$ contracts curves without contracting a divisor if and only if $L_R$ is a zero-dimensional scheme.  The result follows from \ref{contract1}\eqref{contract1 1}.
\end{proof}

\begin{remark} 
It follows from \ref{contraction theorem} (or indeed the morita equivalence in \ref{ME lemma}) that the condition $\dim_{\mathbb{C}}\Lambda_\con<\infty$ is independent of the choice of $\Lambda$, and thus the choice of tilting bundle of the form $\cO\oplus\cN$.  Hence we may make any choice, and detect the contractibility by calculating the resulting $\dim_{\mathbb{C}}\Lambda_\con$.  However, to get well-defined invariants that do not depend on choices, we pass to the formal fibre $\mathfrak{R}$ and use the algebra $\CA$. 
\end{remark}

\begin{cor}\label{contraction theorem def functor}
In the global setup $f\colon X\to X_{\con}$ of \ref{setupglobal}, suppose further that $\dim X=3$, pick a closed point $\m\in L$ and set $C:=f^{-1}(\m)$. The following are equivalent.
\begin{enumerate}
\item There is a neighbourhood of $\m$ over which $f$ does not contract a divisor.
\item The functor $\Def_X$ of simultaneous noncommutative deformations of the reduced fibre $C^{\redu}$ is representable.
\end{enumerate}
\end{cor}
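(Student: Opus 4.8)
The plan is to read off representability of $\Def_X$ from the prorepresenting object produced in \ref{All defs prorep}, convert "$\CA$ is finite dimensional" into a statement about $\Supp_{\mathfrak R}\CA$ via \ref{contract1}\eqref{contract1 2}, and then recognise that statement as condition (1) using the same geometric translation that underlies the Contraction Theorem \ref{contraction theorem}.

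First I would invoke \ref{All defs prorep}: the functor $\Def_X$ of noncommutative deformations of $C^{\redu}$ is prorepresented by $\CA\in\proart_n$. By the general fact recorded in \S\ref{naivesection} (a functor prorepresented by $\Gamma\in\proart_n$ is representable precisely when $\dim_\K\Gamma<\infty$), condition (2) is equivalent to $\dim_\K\CA<\infty$. Next I would note that $\CA$ is a module-finite $\mathfrak R$-algebra: it is a quotient of $\AB=\End_{\mathfrak U}(\cO_{\mathfrak U}\oplus\cM^*)$, and $\Hom_{\mathfrak U}$ of coherent sheaves is a coherent $\mathfrak R$-module since $\mathfrak U\to\Spec\mathfrak R$ is proper. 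As $\mathfrak R$ is complete local with unique closed point corresponding to $\m$, a module-finite $\mathfrak R$-algebra has finite $\K$-dimension if and only if its $\mathfrak R$-support is $\{\m\}$; since $\CA\neq 0$ this means $\dim_\K\CA<\infty\iff\Supp_{\mathfrak R}\CA=\{\m\}$. Feeding in the support computation \ref{contract1}\eqref{contract1 2}, namely $\Supp_{\mathfrak R}\CA=\{\p\in L_R\mid \p\subseteq\m\}$, we get that (2) holds if and only if $\m$ is the only point of $L$ generising to $\m$.

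It then remains to identify this with (1). The locus $L$ is closed (it is the non-isomorphism locus of the proper morphism $f$) and $X_{\con}$ is noetherian, so "$\m$ admits no generisation in $L$ other than itself" is equivalent to "$\m$ is an isolated point of $L$", equivalently "there is an open neighbourhood $V\ni\m$ with $\dim(L\cap V)=0$". Finally, because $\dim X=3$ and the fibres of $f$ are at most one-dimensional, over such a $V$ the morphism $f$ contracts no divisor: a contracted divisor $D\subseteq f^{-1}(V)$ has $\dim D=2$ with $\dim f(D)\le 1$ (it cannot be a point, else $f^{-1}(f(D))$ would be at least two-dimensional), so $f(D)\subseteq L\cap V$ would be one-dimensional, a contradiction; conversely a one-dimensional component of $L\cap V$ forces $f^{-1}(L\cap V)$ to contain a divisor. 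This is exactly the translation already used in the proof of \ref{contraction theorem}, so chaining the displayed equivalences completes the argument. (Alternatively, the implication $(1)\Rightarrow\dim_\K\CA<\infty$ can be obtained from \ref{contraction theorem} applied to a suitable affine $\Spec R\ni\m$, together with \ref{Lambda and Lambda con under FBC}\eqref{Lambda and Lambda con under FBC 4} and the morita equivalence \ref{ME lemma}, passing through $\dim_\K\Lambda_{\con}<\infty$.)

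The only step I expect to require genuine care is this last paragraph: the passage between the scheme-theoretic condition "$L$ is zero-dimensional near $\m$" and the birational-geometry condition "$f$ contracts no divisor in a neighbourhood of $\m$", together with the bookkeeping of which open set is being shrunk — the affine $\Spec R$ implicit in the definition of $\CA$ versus the neighbourhood whose existence is asserted in (1). Everything else is a formal consequence of results already established above.
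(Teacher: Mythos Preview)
Your proposal is correct and follows essentially the same route as the paper: invoke \ref{All defs prorep} for prorepresentability by $\CA$, convert representability to $\dim_\K\CA<\infty$, use \ref{contract1}\eqref{contract1 2} to read off the support of $\CA$, and then translate zero-dimensionality of $L$ near $\m$ into the absence of a contracted divisor exactly as in the proof of \ref{contraction theorem}. You simply spell out two steps the paper leaves implicit (module-finiteness of $\CA$ over $\mathfrak R$ giving $\dim_\K\CA<\infty\iff\Supp_{\mathfrak R}\CA=\{\m\}$, and the dimension-count for the divisor translation), but the architecture is identical.
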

\begin{proof}
By choosing an affine open $\Spec R$ containing $\m$, this is identical to the proof of~\ref{contraction theorem}, appealing to \ref{contract1}\eqref{contract1 2} instead of \ref{contract1}\eqref{contract1 1}, and using the prorepresentability of $\Def_X$ from~\ref{All defs prorep}.
\end{proof}

\section{Deformations of the Scheme-Theoretic Fibre}\label{section:schemefibre}

In the global setup $f\colon X\to X_{\con}$ of \ref{setupglobal}, we choose a closed point $\m\in L$ and in this section study commutative and noncommutative deformations of the scheme-theoretic fibre $\cO_C$, where $C:=f^{-1}(\m)$.  We show that commutative and noncommutative deformations are prorepresented by the same object, and more remarkably that the prorepresenting object can be obtained from the same $\AB$ as $\CA$ can.  This allows us to relate deformations of the reduced and scheme-theoretic fibres, in a way that otherwise would not be possible.

\subsection{Prorepresentability}
With the Zariski local setup in \ref{setupZariski}, taking the dual bundle in~\eqref{derived equivalence} induces a derived equivalence
\begin{eqnarray}
\begin{array}{c}
\begin{tikzpicture}
\node (a1) at (0,0) {\phantom{.}$\Db(\coh U)$};
\node (a2) at (5,0) {$\Db(\mod \End_U(\cV^*))$.};
\draw[->] (a1) -- node[above] {$\scriptstyle\RHom_U(\cV^*,-)$} node [below] {$\scriptstyle\sim$} (a2);
\end{tikzpicture}
\end{array}\label{derived equivalence dual}
\end{eqnarray}
Note that $\End_U(\cV^*) = \Lambda^{\op}$.  Also, by \cite[3.5.7]{VdB1d}, under the above equivalence \eqref{derived equivalence dual} the sheaf $\cO_C$ corresponds to a simple $\Lambda^{\op}$-module, which we denote $T_0^\prime$.  This fact is the reason we pass to $\cV^*$, since it will allow us to easily apply \ref{Kellerplus} in the proof of \ref{oCdefmain} below.

Passing to the formal fibre $\mathfrak{U}\to\Spec\mathfrak{R}$, the dual of the previous bundle in \ref{def basic algebra} gives the following natural definition. 

\begin{defin}\label{def basic algebra dual}
We write $\cM:=\bigoplus_{i=1}^n \cM_i$ and define 
\[
\BB:=\End_{\mathfrak{U}}(\cO_{\mathfrak{U}}\oplus \cM)=\AB^{\op},
\]
which is the basic algebra morita equivalent to $\widehat{\Lambda}^{\op}$. From this, we define 
\[
\BB_{\fib}:=\End_{\mathfrak{U}}(\cO_{\mathfrak{U}}\oplus \cM)/[\cM].
\]
\end{defin}
Under this dual setup, the following is obvious.
\begin{lemma}\label{BfibAfib}
$\BB^{\phantom \op}_{\fib}\cong\AB_{\fib}^{\op}\cong\AB^{\phantom \op}_{\fib}$, and in particular $\BB_{\fib}$ is commutative.
\end{lemma}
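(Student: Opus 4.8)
The claim is that $\BB_{\fib}\cong\AB_{\fib}^{\op}\cong\AB_{\fib}$, and in particular that $\BB_{\fib}$ is commutative. The plan is to trace through the definitions and exploit the duality $\BB=\AB^{\op}$ together with the fact, recorded in Remark~\ref{Afibcomm}, that $\AB_{\fib}$ is already known to be commutative.

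First I would observe that since $\BB=\End_{\mathfrak{U}}(\cO_{\mathfrak{U}}\oplus\cM)=\AB^{\op}$, the $\op$-functor carries the two-sided ideal $[\cM]\subseteq\End_{\mathfrak{U}}(\cO_{\mathfrak{U}}\oplus\cM)$ to the two-sided ideal of $\End_{\mathfrak{U}}(\cO_{\mathfrak{U}}\oplus\cM^*)=\AB$ consisting of morphisms factoring through $\add\cM^*$; indeed, applying $\Hom_{\mathfrak{U}}(-,\cO_{\mathfrak{U}}\oplus\cM^*)$ (or simply dualizing) identifies $\End_{\mathfrak{U}}(\cO_{\mathfrak{U}}\oplus\cM)^{\op}$ with $\End_{\mathfrak{U}}((\cO_{\mathfrak{U}}\oplus\cM)^*)=\End_{\mathfrak{U}}(\cO_{\mathfrak{U}}\oplus\cM^*)=\AB$, and under this identification a morphism factors through $\add\cM$ on one side if and only if its dual factors through $\add\cM^*$ on the other. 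Hence $[\cM]^{\op}=[\cM^*]\subseteq\AB$, and taking the $\op$ of the defining quotient gives
\[
\BB_{\fib}^{\op}=\big(\End_{\mathfrak{U}}(\cO_{\mathfrak{U}}\oplus\cM)/[\cM]\big)^{\op}\cong \End_{\mathfrak{U}}(\cO_{\mathfrak{U}}\oplus\cM^*)/[\cM^*]=\AB_{\fib}.
\]
This establishes $\BB_{\fib}\cong\AB_{\fib}^{\op}$.

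Next, to get $\AB_{\fib}^{\op}\cong\AB_{\fib}$, I would simply invoke Remark~\ref{Afibcomm}: there it is recorded that $\End_{\mathfrak{U}}(\cO_{\mathfrak{U}})\cong\mathfrak{R}$, so that $\AB_{\fib}=\End_{\mathfrak{U}}(\cO_{\mathfrak{U}})/[\cM^*]$ is a quotient of the commutative ring $\mathfrak{R}$, hence commutative. A commutative ring is canonically isomorphic to its opposite via the identity map, so $\AB_{\fib}^{\op}\cong\AB_{\fib}$, and combining the two displayed isomorphisms yields $\BB_{\fib}\cong\AB_{\fib}^{\op}\cong\AB_{\fib}$, which in particular shows $\BB_{\fib}$ is commutative.

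There is essentially no obstacle here: the only point requiring a moment's care is the bookkeeping that the $\op$-functor exchanges $[\cM]$ with $[\cM^*]$ under the dualization isomorphism $\End_{\mathfrak{U}}(\cO_{\mathfrak{U}}\oplus\cM)^{\op}\cong\End_{\mathfrak{U}}(\cO_{\mathfrak{U}}\oplus\cM^*)$, and that dualization sends $\add\cM$ to $\add\cM^*$ (which is immediate since $(-)^*$ is an exact anti-equivalence on vector bundles fixing $\cO_{\mathfrak{U}}$). Everything else is formal, so I would keep the proof to a couple of lines, as the statement is flagged in the text as ``obvious''.
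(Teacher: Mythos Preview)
Your proposal is correct and follows exactly the same route as the paper: the paper's proof reads ``The first statement is clear since $\BB=\AB^{\op}$. The second statement is \ref{Afibcomm}.'' You have simply unpacked the bookkeeping behind the first sentence (that under the duality $\End_{\mathfrak{U}}(\cO_{\mathfrak{U}}\oplus\cM)^{\op}\cong\End_{\mathfrak{U}}(\cO_{\mathfrak{U}}\oplus\cM^*)$ the ideal $[\cM]$ corresponds to $[\cM^*]$), which the paper leaves implicit.
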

\begin{proof}
The first statement is clear since $\BB=\AB^{\op}$.  The second statement is \ref{Afibcomm}.
\end{proof}

In what follows, we let $S_0^\prime$ denote the simple $\BB$-module corresponding to $T_0^\prime$ under the composition of the completion functor and the morita equivalence between $\widehat{\Lambda}^{\op}$ and $\BB$. Similarly to \ref{TandSnotation}, for a given scheme-theoretic fibre $C$, below we use the following notation.
\begin{enumerate}
\item $\Def_X^{\cO_C}$ for the DG deformation functor associated to the injective resolution of~$\cO_{C}\in\coh X$.
\item $\Def_U^{\cO_C}$ for the DG deformation functor associated to the injective resolution of~$\cO_{C}\in\coh U$.
\item $\Def_{\Lambda^{\op}}^{T_0^\prime}$ for the DG deformation functor associated to the injective resolution of~$T_0^\prime\in\mod \Lambda^{\op}$.
\item $\Def_{\BB}^{S_0^\prime}$ for the DG deformation functor associated to the injective resolution of~$S_0\in\mod\BB$.
\end{enumerate}

\begin{thm}\label{oCdefmain}
In the global setup $f\colon X\to X_{\con}$ of \ref{setupglobal}, pick a closed point $\m\in L$ and set $C:=f^{-1}(\m)$. Then
\[
\Def^{\cO_C}_X\cong\Hom_{\proart_1}(\AB_{\fib},-),
\] 
and further $\AB_{\fib}$ is commutative.
\end{thm}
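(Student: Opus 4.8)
The plan is to run the four-step chain of \ref{All defs prorep}, but with the single sheaf $\cO_C$ in place of the family $\{\cO_{C_i}(-1)\}$ and with the dual equivalence \eqref{derived equivalence dual} in place of \eqref{derived equivalence}; since we deform one object, the test category throughout is $\art_1$. Recall from \cite[3.5.7]{VdB1d} that $\cO_C$ corresponds under \eqref{derived equivalence dual} to the \emph{simple} $\Lambda^{\op}$-module $T_0^\prime$, which in turn corresponds, via completion followed by the Morita equivalence $\widehat\Lambda^{\op}\sim\BB$, to the simple $\BB$-module $S_0^\prime$. By \ref{Qis give def iso} it then suffices to connect the DGAs underlying $\Def^{\cO_C}_X$, $\Def^{\cO_C}_U$, $\Def^{T_0^\prime}_{\Lambda^{\op}}$, $\Def^{S_0^\prime}_{\BB}$ by quasi-isomorphisms in $\DG_1$, and finally to identify $\Def^{S_0^\prime}_{\BB}$ with $\Hom_{\proart_1}(\AB_{\fib},-)$.

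\emph{Steps 1--3.} For $\Def^{\cO_C}_X\cong\Def^{\cO_C}_U$, apply \ref{FLlemma} to $F=i_*\colon\Qcoh U\to\Qcoh X$ with exact left adjoint $i^*$: since $C$ is closed, $\mathbf{R}^{t}i_*\cO_C=0$ for $t>0$, and the counit $i^*i_*\to\Id$ is an isomorphism on all sheaves. For $\Def^{\cO_C}_U\cong\Def^{T_0^\prime}_{\Lambda^{\op}}$, copy the proof of \ref{All defs prorep}\eqref{All defs prorep 2} with $\cV$ replaced by $\cV^*$: take a projective resolution $P^\bullet\to T_0^\prime\to 0$, apply the left adjoint $L:=\cV^*\otimes_{\Lambda^{\op}}-$ to get a locally free resolution $Q^\bullet:=LP^\bullet\to\cO_C\to 0$ (using $\cV^*\otimes^{\bf L}_{\Lambda^{\op}}T_0^\prime\cong\cO_C$ for the $\Tor$-vanishing), and invoke \ref{Kellerplus}: by adjunction $\Hom^{\DG}_U(Q[1],Q[1]\to I)\cong\Hom^{\DG}_{\Lambda^{\op}}(P[1],P[1]\to\Hom_U(\cV^*,I))$ is exact since $P$ is $h$-projective, so $\End^{\DG}_U(I)\simeq\End^{\DG}_U(Q)=\End^{\DG}_U(LP)\simeq\End^{\DG}_{\Lambda^{\op}}(P)\simeq\End^{\DG}_{\Lambda^{\op}}(J)$ for $J$ an injective resolution of $T_0^\prime$. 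For $\Def^{T_0^\prime}_{\Lambda^{\op}}\cong\Def^{S_0^\prime}_{\BB}$, apply \ref{FLlemma} to the exact functor $\mathbb{F}^{-1}\circ\widehat{(-)}\colon\Mod\Lambda^{\op}\to\Mod\BB$ with exact left adjoint $\rest\circ\,\mathbb{F}$, exactly as in \ref{All defs prorep}\eqref{All defs prorep 3}; all higher derived functors of $S_0^\prime$ vanish, and the counit is an isomorphism on $S_0^\prime$ because completion restricts to an equivalence on finite-length modules supported at $\m$.

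\emph{Step 4 and conclusion.} By \ref{Ed Lemma}, $\Def^{S_0^\prime}_{\BB}$ coincides with the naive deformation functor of the one-dimensional simple $S_0^\prime$, which sits at the vertex of $\BB=\End_{\mathfrak{U}}(\cO_{\mathfrak{U}}\oplus\cM)$ corresponding to $\cO_{\mathfrak{U}}$. The standard computation of \cite[3.1]{DW1}, reproduced in the proof of \ref{All defs prorep}\eqref{All defs prorep 4}, then shows that a bimodule deformation of $S_0^\prime$ over $\Gamma\in\art_1$ is precisely a $\K$-algebra map $\BB_{\fib}=\BB/[\cM]\to\Gamma$, because the idempotent at $\cM$ must act as zero and hence every morphism factoring through $\add\cM$ is killed; thus $\Def^{S_0^\prime}_{\BB}\cong\Hom_{\alg_1}(\BB_{\fib},-)$, and $\BB_{\fib}\in\proart_1$ since $\mathfrak{R}$, hence $\BB$, hence $\BB_{\fib}$, is complete with respect to its augmentation ideal. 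Combining Steps 1--4 gives $\Def^{\cO_C}_X\cong\Hom_{\proart_1}(\BB_{\fib},-)$, while $\BB_{\fib}\cong\AB_{\fib}$ by \ref{BfibAfib}; commutativity of $\AB_{\fib}$ is \ref{Afibcomm}, as $\AB_{\fib}\cong\End_{\mathfrak{U}}(\cO_{\mathfrak{U}})/[\cM^*]\cong\mathfrak{R}/[\cM^*]$ is a quotient of the commutative ring $\mathfrak{R}$.

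\emph{Main obstacle.} The only non-routine point is Step 2: one must verify that applying the non-exact functor $\cV^*\otimes_{\Lambda^{\op}}-$ to a projective resolution of $T_0^\prime$ yields an honest locally free resolution of $\cO_C$ --- which rests on the $\Tor$-vanishing packaged in $\cV^*\otimes^{\bf L}_{\Lambda^{\op}}T_0^\prime\cong\cO_C$ --- and that the resulting spliced complex $Q[1]\to I$ meets the $h$-projectivity hypothesis of \ref{Kellerplus}\eqref{Kellerplus 2}. This is the step where the geometry genuinely enters, via the fact that $\cO_C$ is \emph{simple} on the $\Lambda^{\op}$-side (\cite[3.5.7]{VdB1d}), and it is exactly why the argument must be run with $\cV^*$ rather than $\cV$.
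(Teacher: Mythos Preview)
Your proposal is correct and follows essentially the same four-step chain as the paper's own proof: open immersion via \ref{FLlemma}, tilting via \ref{Kellerplus} with the dual bundle $\cV^*$, completion plus Morita via \ref{FLlemma}, and the vertex-simple computation via \cite[3.1]{DW1}, concluding with \ref{BfibAfib}. The only differences are cosmetic --- you spell out the invocation of \ref{Ed Lemma} in Step~4 and add the diagnostic paragraph on Step~2, whereas the paper simply refers back to the argument of \ref{All defs prorep}\eqref{All defs prorep 2} --- but the mathematics is identical.
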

\begin{proof}
Exactly as in \ref{All defs prorep}, we first claim that
\begin{eqnarray}
\Def_X^{\cO_C}\cong\Def_U^{\cO_C}\cong\Def_{\Lambda^{\op}}^{T_0^\prime}\cong \Def_\BB^{S_0^\prime}\cong \Hom_{\proart_1}(\BB_{\fib},-).\label{claimed isos}
\end{eqnarray}
Under $i\colon U\hookrightarrow X$, since $C$ is closed $\RDerived i_*\cO_C=i_*\cO_C$, and so the first claimed isomorphism follows from \ref{FLlemma}.  Under the derived equivalence \eqref{derived equivalence dual} above, the sheaf $\cO_C$ corresponds to the module $T_0^\prime$, so the second claimed isomorphism follows from \ref{Kellerplus}, exactly as in the proof of \ref{All defs prorep}\eqref{All defs prorep 2}.  The third claimed isomorphism follows from the fact that  finite length $\Lambda^{\op}$-modules supported at $\m$ are equivalent to finite length $\BB$-modules, and so the result follows from \ref{FLlemma}.  For the last claimed isomorphism, since $S_0^\prime$ is the vertex simple, as in \cite[3.1]{DW1} it is clear that
\[
\Def_\BB^{S_0^\prime}\cong \Hom_{\alg_1}(\BB_{\fib},-).
\] 
It remains to show that $\BB_{\fib}\in\proart_1$, but this holds since $\mathfrak{R}$, thus $\BB$, and thus $\BB_{\fib}$, are complete with respect to their augmentation ideals. With \eqref{claimed isos} established, the remaining statement follows from \ref{BfibAfib}.
\end{proof}

\subsection{Comparison of Deformations}\label{subsect:comparison}
One of the remarkable consequences of \ref{All defs prorep} and \ref{oCdefmain} is that there is a single $\AB$, of which various factors control different natural geometric deformation functors.  Thus proving elementary facts for the ring $\AB$ has strong deformation theory consequences; the following is one such example.
\begin{prop}\label{dimgivesdim}
If $\dim_{\mathbb{C}}\CA<\infty$, then 
$\dim_\mathbb{C}\AB_{\fib}<\infty$.
\end{prop}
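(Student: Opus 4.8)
The plan is to reduce everything to a statement about supports over $\mathfrak{R}$. Recall from \ref{def basic algebra} that $\AB_{\fib}\cong\End_{\mathfrak{U}}(\cO_{\mathfrak{U}})/[\cM^*]$, and since $\End_{\mathfrak{U}}(\cO_{\mathfrak{U}})\cong\mathfrak{R}$ this is a cyclic $\mathfrak{R}$-module, say $\AB_{\fib}\cong\mathfrak{R}/I$ where $I=[\cM^*]\subseteq\mathfrak{R}$ is the ideal of those endomorphisms of $\cO_{\mathfrak{U}}$ which factor through $\add\cM^*$. As $\mathfrak{R}$ is a complete local Noetherian $\mathbb{C}$-algebra with residue field $\mathbb{C}$, it suffices to prove $\Supp_{\mathfrak{R}}\AB_{\fib}\subseteq\{\mathfrak{m}\}$, with $\mathfrak{m}$ the maximal ideal of $\mathfrak{R}$: indeed this forces $\mathfrak{m}^k\subseteq I$ for some $k\geq 1$, so $\AB_{\fib}$ is a quotient of the finite-dimensional ring $\mathfrak{R}/\mathfrak{m}^k$.

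The first step is to show $\Supp_{\mathfrak{R}}\AB_{\fib}\subseteq L_{\mathfrak{R}}:=L\cap\Spec\mathfrak{R}$. Fix $\p\in\Spec\mathfrak{R}\setminus L_{\mathfrak{R}}$ and localise at $\p$; then the localised morphism $\mathfrak{U}_\p\to\Spec\mathfrak{R}_\p$ is an isomorphism, so the nonzero vector bundle $(\cM^*)_\p$ becomes a vector bundle over the local ring $\mathfrak{R}_\p$, hence is free of positive rank, whence $\cO_{\mathfrak{U}_\p}$ is a direct summand of $(\cM^*)_\p$. Now $[\cM^*]$ is the image of the composition map $\Hom_{\mathfrak{U}}(\cO_{\mathfrak{U}},\cM^*)\otimes_{\mathfrak{R}}\Hom_{\mathfrak{U}}(\cM^*,\cO_{\mathfrak{U}})\to\End_{\mathfrak{U}}(\cO_{\mathfrak{U}})$; since each of these $\Hom$-modules of coherent sheaves commutes with flat localisation, $I_\p$ is the corresponding ideal of $\End_{\mathfrak{U}_\p}(\cO_{\mathfrak{U}_\p})=\mathfrak{R}_\p$, and because $\cO_{\mathfrak{U}_\p}$ splits off $(\cM^*)_\p$ the identity lies in it, so $I_\p=\mathfrak{R}_\p$ and $(\AB_{\fib})_\p=0$. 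This proves the inclusion.

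Finally, suppose $\dim_{\mathbb{C}}\CA<\infty$. Then the action map $\mathfrak{R}\to\End_{\mathbb{C}}(\CA)$ realises $\mathfrak{R}/\Ann_{\mathfrak{R}}\CA$ as a subalgebra of a finite-dimensional algebra, so it is Artinian, and its prime ideals pull back to maximal ideals of $\mathfrak{R}$; since $\mathfrak{R}$ is local this forces $\Supp_{\mathfrak{R}}\CA\subseteq\{\mathfrak{m}\}$. By \ref{contract1}\eqref{contract1 2} we have $\Supp_{\mathfrak{R}}\CA=L_{\mathfrak{R}}$, hence $L_{\mathfrak{R}}\subseteq\{\mathfrak{m}\}$, and combining with the previous step $\Supp_{\mathfrak{R}}\AB_{\fib}\subseteq L_{\mathfrak{R}}\subseteq\{\mathfrak{m}\}$; the first paragraph then gives $\dim_{\mathbb{C}}\AB_{\fib}<\infty$. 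The only step needing genuine care is the middle one: checking that passing to the ideal ``of maps factoring through $\add\cM^*$'' commutes with flat localisation, and that $\cO_{\mathfrak{U}}$ really splits off $(\cM^*)_\p$ away from $L_{\mathfrak{R}}$. Given flat base change for $\Hom$ of coherent sheaves and the triviality of vector bundles over local rings, however, this is routine rather than a serious obstacle.
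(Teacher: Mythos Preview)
Your proof is correct, but takes a genuinely different route from the paper's. The paper argues the contrapositive directly and self-containedly: using \ref{prop:up=down} to write $\AB\cong\End_{\mathfrak{R}}(\mathfrak{R}\oplus D)^{\op}$ with $D=f_*\cM$, it shows that if $(\AB_{\fib})_\p\neq 0$ for some non-maximal $\p$ then $D_\p$ is not free, whence the identity on $D_\p$ cannot factor through $\add\mathfrak{R}_\p$, giving $(\CA)_\p\neq 0$. No appeal to \ref{contract1} is made. Your argument instead routes everything through the non-isomorphism locus $L_{\mathfrak{R}}$: you show $\Supp_{\mathfrak{R}}\AB_{\fib}\subseteq L_{\mathfrak{R}}$ by a direct bundle-triviality argument on $\mathfrak{U}_\p$, and then invoke \ref{contract1}\eqref{contract1 2} to get $L_{\mathfrak{R}}=\Supp_{\mathfrak{R}}\CA\subseteq\{\mathfrak{m}\}$. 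Your approach is conceptually clean and makes the geometric content visible, but it leans on the heavier machinery behind \ref{contract1} (ultimately Balmer reconstruction via \ref{notequiv lemma}); the paper's argument is more elementary and entirely module-theoretic over $\mathfrak{R}$.
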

\begin{proof}
Since $\cO_{\mathfrak{U}}\oplus\cM$ is generated by global sections, by \ref{prop:up=down} $\AB\cong \End_{\mathfrak{R}}(\mathfrak{R}\oplus \clocCon_*\cM)^{\op}$.  To~ease notation we temporarily set $D=\clocCon{}_*\cM$, so $\CA^{\op}=\End_{\mathfrak{R}}(\mathfrak{R}\oplus D)/[\mathfrak{R}]$ and $\AB_{\fib}^{\op}=\End_{\mathfrak{R}}(\mathfrak{R}\oplus D)/[D]$.  Since taking opposite rings does not affect dimension, in what follows we can ignore the ops.

To establish the result, we prove the contrapositive.  If $\dim_{\mathbb{C}}\AB_{\fib}=\infty$ then there exists a non-maximal prime ideal $\p\in\Spec\mathfrak{R}$ such that $(\AB_{\fib})_\p\neq 0$. As in \ref{Lambda and Lambda con under FBC} we have $(\AB_{\fib})_\p\cong {(\AB_\p)}_{\fib}$, and thus ${(\AB_\p)}_{\fib} = \End_{\mathfrak{R}_\p}(\mathfrak{R}_\p\oplus D_\p)/[D_\p]\neq 0$.  Certainly this means that $D_\p$ cannot be free.  Now if $\Id\colon D_\p\to D_\p$ factors through $\add\mathfrak{R}_\p$ then $D_\p$ is projective.  But since $\mathfrak{R}_\p$ is local, $D_\p$ would then be free, which is a contradiction.  Thus $\Id\colon D_\p\to D_\p$ does not factor through $\add\mathfrak{R}_\p$, so 
\[
(\CA)_\p\cong {(\AB_\p)}_{\con} = \End_{\mathfrak{R}_\p}(\mathfrak{R}_\p\oplus D_\p)/[\mathfrak{R}_\p]\neq 0.
\]
This implies that $\p\in\Supp_{\mathfrak{R}}\CA$ and so $\dim_{\mathbb{C}}\CA=\infty$.
\end{proof}

\begin{cor}\label{obvious}
In the global setup $f\colon X\to X_{\con}$ of \ref{setupglobal}, pick a closed point $\m\in L$ and set $C:=f^{-1}(\m)$.  Write $C^{\redu}=\bigcup_{i=1}^nC_i$, then 
\begin{enumerate}
\item\label{obvious 1}If $\Def_X$ is representable, so is $\Def_X^{\cO_C}$.
\item\label{obvious 2} Suppose $\dim X=3$.  If $\Def_X^{\cO_C}$ is not representable, then $f$ contracts a divisor over a neighbourhood of $\m$.
\end{enumerate}
\end{cor}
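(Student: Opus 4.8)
The plan is to obtain both parts as formal consequences of the prorepresentability results already established, with no new geometric input beyond the elementary ring-theoretic inequality of \ref{dimgivesdim}. The only general principle needed is the one recorded earlier: a functor prorepresented by an object $\Gamma$ of $\proart_n$ is representable if and only if $\dim_{\mathbb{C}}\Gamma<\infty$.

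For the first part I would concatenate three implications. By \ref{All defs prorep}, $\Def_X\cong\Hom_{\proart_n}(\CA,-)$, so representability of $\Def_X$ is equivalent to $\dim_{\mathbb{C}}\CA<\infty$. Then \ref{dimgivesdim} gives $\dim_{\mathbb{C}}\AB_{\fib}<\infty$. Finally, \ref{oCdefmain} identifies $\Def_X^{\cO_C}$ with $\Hom_{\proart_1}(\AB_{\fib},-)$, so $\dim_{\mathbb{C}}\AB_{\fib}<\infty$ forces $\Def_X^{\cO_C}$ to be representable. Chaining these gives the first assertion, and notably none of it uses $\dim X=3$.

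For the second part I would argue by contraposition, now invoking \ref{contraction theorem def functor}, which is where the hypothesis $\dim X=3$ enters. Suppose $f$ does not contract a divisor over some neighbourhood of $\m$; then \ref{contraction theorem def functor} shows $\Def_X$ is representable, and the first part then shows $\Def_X^{\cO_C}$ is representable. The contrapositive is precisely the second assertion. I do not expect a genuine obstacle; the only point worth a sentence is the reading of ``$f$ contracts a divisor over a neighbourhood of $\m$'': since $f$ is proper birational its exceptional locus has only finitely many divisorial components and each of their images is closed, so this condition is equivalent to $\m\in f(D)$ for some contracted divisor $D$, and its negation is exactly ``there is a neighbourhood of $\m$ over which $f$ contracts no divisor'', which is the left-hand alternative in \ref{contraction theorem def functor}. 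With this understood the contraposition is valid as stated, and the corollary is really a bookkeeping exercise on top of \ref{dimgivesdim}.
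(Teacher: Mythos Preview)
Your proposal is correct and follows exactly the paper's own argument: part~(1) is deduced from \ref{dimgivesdim} together with the prorepresentability results \ref{All defs prorep} and \ref{oCdefmain}, and part~(2) is the contrapositive of \ref{contraction theorem def functor} combined with part~(1). The additional remark on interpreting ``contracts a divisor over a neighbourhood of $\m$'' is a helpful clarification but is not needed beyond what the paper already assumes.
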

\begin{proof}
(1) This is immediate from \ref{dimgivesdim}, since $\AB_{\fib}$ prorepresents $\Def_X^{\cO_C}$ by \ref{oCdefmain}, and $\CA$ prorepresents $\Def_X$ by \ref{All defs prorep}.\\
(2) Follows from \eqref{obvious 1} and \ref{contraction theorem def functor}.
\end{proof}
The converse to \ref{obvious}\eqref{obvious 2} is however false; we show this in \S\ref{examples section} below.   The following summarises the main results in this paper in the case of $3$-folds.

\begin{summary}\label{representability and algebra algebra properties}
In the global setup $f\colon X\to X_{\con}$ of \ref{setupglobal}, suppose further that $\dim X=3$, pick a closed point $\m\in L$, set $C:=f^{-1}(\m)$ and write $C^{\redu}=\bigcup_{i=1}^nC_i$.
\begin{enumerate}
\item\label{representability and algebra algebra properties 1} Both the noncommutative deformation functor $\Def_X^{\cO_{C}}$ and commutative deformation functor $\cDef_X^{\cO_{C}}$ are prorepresented by $\CAR$.  
\item\label{representability and algebra algebra properties 2} The following are equivalent.
\begin{enumerate}
\item[(a)] The functors $\cDef_X^{\cO_C}$ and $\Def_X^{\cO_C}$ are representable.
\item[(b)] $\dim_{\mathbb{C}}\CAR<\infty$.
\end{enumerate}
\item\label{representability and algebra algebra properties 3} The following are equivalent.
\begin{enumerate}
\item[(a)] The functor $\Def_X$ of simultaneous noncommutative deformations of the reduced fibre $C^{\redu}$ is representable.
\item[(b)] $\dim_{\mathbb{C}}\CA<\infty$.
\item[(c)] There is a neighbourhood of $\m$ over which $f$ does not contract a divisor.
\end{enumerate}
\item\label{representability and algebra algebra properties 4} The statements in \t{(3)} imply the statements in \t{(2)}, but the statements in \t{(2)} do not imply the statements in \t{(3)} in general.
\end{enumerate}
\end{summary}
\begin{proof}
Part (1) is \ref{oCdefmain}, and part (2) is tautological.  Part (3) is \ref{All defs prorep} and \ref{contraction theorem def functor}, and part (4) is shown by the counterexample in \ref{trihedral1} below.  
\end{proof}

\section{Examples}\label{examples section}
In this section, we first illustrate some $\CA$ that can arise in the setting of \ref{contraction theorem def functor} for specific $cA_n$ singularities.  We then show that the converse to \ref{obvious} is false, and also that \ref{contraction theorem def functor} fails if we replace noncommutative deformations by commutative ones.

\subsection{First Examples}
Consider the $cA_n$ singularities $\mathfrak{R}:=\mathbb{C}[[u,v,x,y]]/(uv-f_1\hdots f_n)$ for some $f_i\in\m:=(x,y)\subset \mathbb{C}[[x,y]]$.  The algebra $\AB$ in \ref{def basic algebra}, and thus the algebras $\CA$ and $\CAR$, can be obtained using the calculation in \cite[5.29]{IW6}.  Here we make this explicit in two examples.

%%% Curves in A2 surface
\begin{example}[A $2$-curve flop] Consider the case $f_1=x$, $f_2=y$ and $f_3=x+y$.  In this example there are six crepant resolutions of $\Spec\mathfrak{R}$, and each has two curves above the origin. For one such choice, by \cite[5.2]{IW5}
\[
\AB=\End_{\mathfrak{R}}(\mathfrak{R}\oplus (u,x)\oplus (u,xy)),
\]
which by \cite[5.29]{IW6} can be presented as the completion of the quiver with relations
\[
\begin{array}{c|c}
\begin{array}{c}
\begin{tikzpicture} [bend angle=45, looseness=1.2]
\node[name=s,regular polygon, regular polygon sides=3, minimum size=2cm] at (0,0) {}; 
\node (C1) at (s.corner 1)  {$\scriptstyle (u,x)$};
\node (C2) at (s.corner 2)  {$\scriptstyle \mathfrak{R}$};
\node (C3) at (s.corner 3)  {$\scriptstyle (u,xy)$};
\draw[->] (C2) -- node  [gap] {$\scriptstyle x$} (C1); 
\draw[->] (C1) -- node  [gap] {$\scriptstyle y$} (C3);
\draw[->] (C3) -- node  [gap,pos=0.4] {$\scriptstyle \frac{x+y}{u}$} (C2); 
\draw [->,bend right] (C1) to node [left] {$\scriptstyle { inc}$} (C2);
\draw [->,bend right] (C2) to node [below] {$\scriptstyle u$} (C3);
\draw [->,bend right] (C3) to node [right] {$\scriptstyle { inc}$} (C1);
\end{tikzpicture}\end{array}
 &
\begin{array}{cc}
\begin{array}{c}
\begin{tikzpicture} [bend angle=45,looseness=1.2]
\node[name=s,regular polygon, regular polygon sides=3, minimum size=2cm] at (0,0) {}; 
\node (C1) at (s.corner 1)  [vertex] {};
\node (C2) at (s.corner 2)  [cvertex] {};
\node (C3) at (s.corner 3)  [vertex] {};
\draw[->] (C2) -- node  [gap] {$\scriptstyle c_1$} (C1); 
\draw[->] (C1) -- node  [gap] {$\scriptstyle c_2$} (C3);
\draw[->] (C3) -- node  [gap] {$\scriptstyle c_3$} (C2); 
\draw [->,bend right] (C1) to node [left] {$\scriptstyle a_1$} (C2);
\draw [->,bend right] (C2) to node [below] {$\scriptstyle a_3$} (C3);
\draw [->,bend right] (C3) to node [right] {$\scriptstyle a_2$} (C1);
\end{tikzpicture}\end{array}&
 {\scriptsize{
\begin{array}{c}
a_1c_1a_1+c_2a_2a_1=a_1a_3c_3\phantom{,}\\
a_2a_1c_1+a_2c_2a_2=c_3a_3a_2\phantom{,}\\
c_1a_1a_3+a_3a_2c_2=a_3c_3a_3\phantom{,}\\
c_1a_1c_1+c_1c_2a_2=a_3c_3c_1\phantom{,}\\
c_2a_2c_2+a_1c_1c_2=c_2c_3a_3\phantom{,}\\
c_3c_1a_1+a_2c_2c_3=c_3a_3c_3,
\end{array}}}
\end{array}
\end{array}
\]
given by the superpotential 
\[
W=\frac{1}{2}c_1a_1c_1a_1+\frac{1}{2}c_2a_2c_2a_2+\frac{1}{2}c_3a_3c_3a_3+c_1c_2a_2a_1-c_1a_1a_3c_3-c_2c_3a_3a_2.
\]
From this presentation, factoring by the appropriate idempotents  it is immediate that $\CAR\cong \mathbb{C}$, and further
\[
\CA\cong
\begin{array}{cc}
\begin{array}{c}
\begin{tikzpicture}[bend angle=20, looseness=1]
\node (a) at (-1.5,0) [vertex] {};
\node (b) at (0,0) [vertex] {};
\node (a1) at (-1.5,-0.2) {$\scriptstyle 1$};
\node (b1) at (0,-0.2) {$\scriptstyle 2$};
\draw[->,bend left] (b) to node[below] {$\scriptstyle a_2$} (a);
\draw[->,bend left] (a) to node[above] {$\scriptstyle c_2$} (b);
\end{tikzpicture}
\end{array}
& 
{\scriptsize
\begin{array}{c}
a_2c_2a_2=0\phantom{.}\\
c_2a_2c_2=0.
\end{array}}
\end{array}
\]
Since $\CA$ is finite dimensional, by \ref{representability and algebra algebra properties}\eqref{representability and algebra algebra properties 3} the contraction only contracts curves to a point.
\end{example}

\begin{example}[A divisorial contraction]\label{div example} Consider the case $f_1=f_2=x$ and $f_3=y$.  In this case there are three crepant resolutions, and each has two curves above the origin.  For one such choice, obtained by blowing up the ideal $u=v=x=0$, the resolution is sketched as follows,
\[
\begin{tikzpicture}
%%%
\node at (0,0) {\begin{tikzpicture}[scale=1]
\coordinate (T) at (1.9,2);
\coordinate (TM) at (2.12-0.02,1.5-0.1);
\coordinate (BM) at (2.12-0.05,1.5+0.1);
\coordinate (B) at (2.1,1);
\draw[line width=0.5pt]\opt{colordiag}{[red]} (T) to [bend left=25] (TM);
\draw[line width=0.5pt]\opt{colordiag}{[red]} (BM) to [bend left=25] (B);

\foreach \y in {0.1,0.2,...,1}{ 
\draw[very thin]\opt{colordiag}{[blue]} ($(T)+(\y,0)+(0.02,0)$) to [bend left=25] ($(B)+(\y,0)+(0.02,0)$);
\draw[very thin]\opt{colordiag}{[blue]} ($(T)+(-\y,0)+(-0.02,0)$) to [bend left=25] ($(B)+(-\y,0)+(-0.02,0)$);;}
\draw[rounded corners=15pt,line width=0.5pt] (0.5,0) -- (1.5,0.3)-- (3.6,0) -- (4.3,1.5)-- (4,3.2) -- (2.5,2.7) -- (0.2,3) -- (-0.2,2)-- cycle;
\end{tikzpicture}};
%%%
\node at (0,-3.5) {\begin{tikzpicture}[scale=1]
\draw [->,black] (1.1,0.75) -- (3.1,0.75);
\filldraw\opt{colordiag}{[red]} (2.1,0.75) circle (1pt);
\node at (3.2,0.6) {$\scriptstyle y$};

\draw[rounded corners=12pt,line width=0.5pt] (0.5,0) -- (1.5,0.15)-- (3.6,0) -- (4.3,0.75)-- (4,1.6) -- (2.5,1.35) -- (0.2,1.5) -- (-0.2,0.6)-- cycle;
\end{tikzpicture}};
%%%
\draw[->] (0,-1.6) -- (0,-2.65);
%%%
\end{tikzpicture}
\]
where above the origin there are two curves, and every other fibre over the $y$-axis contains only one curve.  For this resolution
\[
\AB=\End_{\mathfrak{R}}(\mathfrak{R}\oplus (u,x)\oplus(u,xy))
\]
which by \cite[5.29]{IW6} can be presented as the completion of the quiver with relations
\[
\begin{array}{c|c}
\begin{array}{c}
\begin{tikzpicture} [bend angle=45, looseness=1.2]
\node[name=s,regular polygon, regular polygon sides=3, minimum size=2cm] at (0,0) {}; 
\node (C1) at (s.corner 1) {$\scriptstyle (u,x)$};
\node (C2) at (s.corner 2) {$\scriptstyle \mathfrak{R}$};
\node (C3) at (s.corner 3) {$\scriptstyle (u,xy)$};
\draw[->] (C2) -- node  [gap] {$\scriptstyle x$} (C1); 
\draw[->] (C1) -- node  [gap] {$\scriptstyle y$} (C3);
\draw[->] (C3) -- node  [gap,pos=0.4] {$\scriptstyle \frac{x}{u}$} (C2); 
\draw [->,bend right] (C1) to node [left] {$\scriptstyle inc$} (C2);
\draw [->,bend right] (C2) to node [below] {$\scriptstyle u$} (C3);
\draw [->,bend right] (C3) to node [right] {$\scriptstyle inc$} (C1);
\node (C2a) at ($(s.corner 2)+(-135:2pt)$) {};
\draw[<-]  (C2a) edge [in=-100,out=-170,loop,looseness=10] node[below] {$\scriptstyle y$} (C2a);
\end{tikzpicture}
\end{array}
 &
\begin{array}{cc}
\begin{array}{c}
\begin{tikzpicture} [bend angle=45,looseness=1.2]
\node[name=s,regular polygon, regular polygon sides=3, minimum size=2cm] at (0,0) {}; 
\node (C1) at (s.corner 1)  [vertex] {};
\node (C2) at (s.corner 2)  [cvertex] {};
\node (C3) at (s.corner 3)  [vertex] {};
\draw[->] (C2) -- node  [gap] {$\scriptstyle c_1$} (C1); 
\draw[->] (C1) -- node  [gap] {$\scriptstyle c_2$} (C3);
\draw[->] (C3) -- node  [gap] {$\scriptstyle c_3$} (C2); 
\draw [->,bend right] (C1) to node [left] {$\scriptstyle a_1$} (C2);
\draw [->,bend right] (C2) to node [below] {$\scriptstyle a_3$} (C3);
\draw [->,bend right] (C3) to node [right] {$\scriptstyle a_2$} (C1);
\node (C2a) at ($(s.corner 2)+(-135:2pt)$) {};
\draw[<-]  (C2a) edge [in=-100,out=-170,loop,looseness=10] node[below] {$\scriptstyle y$} (C2a);
\end{tikzpicture}\end{array}&
 {\scriptsize{
\begin{array}{l}
yc_1=c_1c_2a_2\\
a_1y=c_2a_2a_1\\
ya_3=a_3a_2c_2\\
c_3y=a_2c_2c_3\\
c_1a_1=a_3c_3\\
a_2a_1c_1=c_3a_3a_2\\
c_2c_3a_3=a_1c_1c_2
\end{array}}}
\end{array}
\end{array}
\]
From this, we see immediately that $\CAR\cong\mathbb{C}[[y]]$, and $\CA$ is the completion of the quiver
\[
\begin{array}{cc}
\begin{array}{c}
\begin{tikzpicture}[bend angle=20, looseness=1]
\node (a) at (-1.5,0) [vertex] {};
\node (b) at (0,0) [vertex] {};
\node (a1) at (-1.5,-0.2) {$\scriptstyle 1$};
\node (b1) at (0,-0.2) {$\scriptstyle 2$};
\draw[->,bend left] (b) to node[below] {$\scriptstyle a_2$} (a);
\draw[->,bend left] (a) to node[above] {$\scriptstyle c_2$} (b);
\end{tikzpicture}
\end{array}
\end{array}
\]
with no relations.  Thus in this example both $\CAR$ and $\CA$ are infinite dimensional, which by \ref{representability and algebra algebra properties} confirms that the contraction morphism contracts a divisor to a curve.
\end{example}

We remark that the above example, \ref{div example}, also appears in \cite[2.4]{Pagoda} and \cite[4.13]{Zerger}.

\subsection{Failure of Commutative Deformations}
Here we give two more complicated examples. The first shows that the converse to \ref{obvious}\eqref{obvious 2} is false, and the second shows that \ref{contraction theorem def functor} fails if we replace noncommutative deformations by commutative ones.  In both examples, there is only one curve in the fibre above the closed point $\m$.

\begin{example}[$\Def^{\cO_C}$ does not detect divisors]\label{trihedral1}  Consider the group $G:=A_4$ acting on its three-dimensional irreducible representation, and set $\mathfrak{R}:=\mathbb{C}[[x,y,z]]^G$.  It is well-known that in the crepant resolutions of $\Spec \mathfrak{R}$, the fibre above the origin of $\Spec \mathfrak{R}$ is one-dimensional: see \cite[\S2.4]{GNS} and \cite{NS}.  

For the crepant resolution given by $h\colon G\mbox{-Hilb}\to\Spec \mathfrak{R}$, there are three curves above the origin meeting transversally in a Type $A$ configuration.  Further, in this case the tilting bundle from $G$-Hilb has endomorphism ring isomorphic to the completion of the following McKay quiver with relations (see e.g.\ \cite[p13--14]{LS} \cite[5.2]{GLR})
\[
\End_\mathfrak{R}(\mathfrak{R}\oplus M_1\oplus M_2\oplus M_3)\cong
\begin{array}{cc}
\begin{array}{c}
\begin{tikzpicture}
\node[rotate=60,name=s,regular polygon, regular polygon sides=3, minimum size=3cm] at (0,0) {}; 
\node (R) at (s.corner 2)  {$\scriptstyle \mathfrak{R}$};
\node (1) at (s.corner 1)  {$\scriptstyle M_{1}$};
\node (2) at (s.corner 3)  {$\scriptstyle M_{3}$};
\node (M) at (s.center) {$\scriptstyle M_{2}$};
\node (Ma) at ($(s.center)+(-155:5pt)$) {};
\node (Mb) at ($(s.center)+(-25:5pt)$) {};
\draw[->]  (Ma) edge [in=180,out=-110,loop,looseness=9] node[left] {$\scriptstyle u$} (Ma);
\draw[->]  (Mb) edge [in=0,out=-70,loop,looseness=9] node[right] {$\scriptstyle v$} (Mb);
\draw[->]  (R) edge [in=-100,out=100,looseness=1] node[left] {$\scriptstyle a$} (M);
\draw[->]  (M) edge [in=80,out=-80,looseness=1] node[right] {$\scriptstyle A$} (R);
\draw[->]  (1) edge [in=160,out=-40,looseness=1] node[below] {$\scriptstyle b$} (M);
\draw[->]  (M) edge [in=-20,out=140,looseness=1] node[above] {$\scriptstyle B$} (1);
\draw[->]  (2) edge [in=40,out=-160,looseness=1] node[above] {$\scriptstyle c$} (M);
\draw[->]  (M) edge [in=-140,out=20,looseness=1] node[below] {$\scriptstyle C$} (2);
\end{tikzpicture} 
\end{array}
& 
{\small\begin{array}{c}
\begin{array}{ll}
uA=vA & au=av\\
uB=\rho vB & bu=\rho bv\\
uC=\rho^2vC & cu=\rho^2cv\\
\end{array}\\ \\
\begin{array}{l}
u^2=Aa+\rho Bb+\rho^2Cc\\
v^2=Aa+\rho^2 Bb+\rho Cc
\end{array}
\end{array}}
\end{array}
\]
where $\rho$ is a cube root of unity.  By \cite[2.15]{HomMMP}, since there are two loops on the middle vertex we see that the middle curve is a $(-3,1)$-curve, and since there are no loops on the outer vertices, the outer curves are $(-1,-1)$-curves.  Also, by inspection
\begin{equation}
\End_\mathfrak{R}(M_2)/[\mathfrak{R}\oplus M_1\oplus M_3]\cong\frac{\mathbb{C}\langle\langle u,v\rangle\rangle}{\mathrm{cl}(u^2,v^2)},
\label{zigzag algebra}
\end{equation}
where $\mathrm{cl}(u^2,v^2)$ denotes the closure of the two-sided ideal $(u^2,v^2)$.  Evidently, the above factor is infinite dimensional.  Since there is a surjective map
\[
\End_\mathfrak{R}(M_2)/[\mathfrak{R}]\twoheadrightarrow \End_\mathfrak{R}(M_2)/[\mathfrak{R}\oplus M_1\oplus M_3]
\]
it follows that $\End_\mathfrak{R}(M_2)/[\mathfrak{R}]$ must also be infinite dimensional.  

Now, contracting both the outer $(-1,-1)$-curves in $G$-Hilb we obtain a scheme $\mathfrak{U}$ and a factorization
\[
\begin{tikzpicture}
\node (A) at (0,0) {$G$-Hilb};
\node (B2) at (1.5,-0.75) {$\mathfrak{U}$};
\node (C) at (3,0) {$\Spec \mathfrak{R}$};
\draw[->] (A) -- node[below] {} (B2);
\draw[->] (B2) -- node[below] {$\scriptstyle f$} (C);
\draw[->] (A) -- node[above] {$\scriptstyle h$}(C);
\end{tikzpicture}
\]
The example we consider is $f\colon \mathfrak{U}\to \Spec \mathfrak{R}$. By construction, there is only one curve above the origin.  As in \cite[4.6]{KIWY} 
\[
\Db(\coh \mathfrak{U})\cong\Db(\mod \End_\mathfrak{R}(\mathfrak{R}\oplus M_2)).
\]
Set $\AB:=\End_\mathfrak{R}(\mathfrak{R}\oplus M_2)$, then the quiver for $\AB$ is obtained from the above McKay quiver by composing two-cycles that pass through $M_1$ and $M_3$.  In particular, in the quiver for $\AB$ there is no loop at the vertex corresponding to $\mathfrak{R}$, so $\AB_{\fib}=\mathbb{C}$.  In particular, by \ref{oCdefmain}, $\Def^{\cO_C}$ is representable.

On the other hand $\CA=\End_\mathfrak{R}(M_2)/[\mathfrak{R}]$, and we have already observed that this is infinite dimensional, so  by \ref{contraction theorem} $f$ contracts a divisor to a curve. It is also possible to observe this divisorial contraction using the explicit calculations of open covers in \cite{NS}.
\end{example}

\begin{example}[$\cDef^{\cJ}$ does not detect divisors]\label{trihedral2} Consider again $\mathfrak{R}:=\mathbb{C}[[x,y,z]]^G$ where $G$ is the alternating group above in \ref{trihedral1}.  Now, contracting instead the middle curve in $G$-Hilb (instead of the outer curves we contracted above) gives a factorization
\[
\begin{tikzpicture}
\node (A) at (0,0) {$G$-Hilb};
\node (B2) at (1.5,-0.75) {$W$};
\node (C) at (3,0) {$\Spec \mathfrak{R}$};
\draw[->] (A) -- node[below] {$\scriptstyle g$} (B2);
\draw[->] (B2) -- node[below] {$\scriptstyle $} (C);
\draw[->] (A) -- node[above] {$\scriptstyle h$}(C);
\end{tikzpicture}
\]
The original middle curve contracts to a closed point $\m$ in $W$, so picking an affine open $\Spec T$ in $W$ containing $\m$, and passing to the formal fibre of $g$ over this point, we obtain a morphism 
\[
g\colon \mathfrak{W}\to \Spec \mathfrak{T}.
\]
To this contraction, we associate the contraction algebra $\CA$ using the procedure in \ref{def basic algebra}. By \cite[3.5(2)]{HomMMP}, it follows from the uniqueness of prorepresenting object that
\[
\CA\cong \End_{\mathfrak{R}}(M_2)/[\mathfrak{R}\oplus M_1\oplus M_3],
\]
where we have recycled notation from \ref{trihedral1}.  Hence by \eqref{zigzag algebra} we see that
\[
\CA\cong \frac{\mathbb{C}\langle\langle u,v\rangle\rangle}{\mathrm{cl}(u^2,v^2)}.
\]
We have already observed that this is infinite dimensional, so $g$ contracts a divisor by \ref{contraction theorem}.  Alternatively, we can see that $g$ contracts a divisor by using the explicit open cover as in \cite[p40]{NS}.

On the other hand, by general theory (see e.g.\ \cite[3.2]{DW1})  $\cDef^\cJ$ is prorepresented by the abelianization of $\CA$, which in this case is simply
\[
\CA^{\ab}\cong \frac{\mathbb{C}[[u,v]]}{(u^2,v^2)}.
\]
By inspection, this is finite dimensional.  This shows that the commutative deformation functor $\cDef^\cJ$ is representable, even although a divisor is contracted to a curve. 
\end{example}


\begin{thebibliography}{KIWY}



\bibitem[B]{Balmer}
P.~Balmer, \emph{Presheaves of triangulated categories and reconstruction of schemes}, Math.\ Ann.\ \textbf{324} (2002), no. 3, 557--580. 

\bibitem[BW]{BrownWemyss}
G.~Brown and M.~Wemyss, \emph{Threefold flops via contraction algebra superpotentials}, in preparation.

\bibitem[DW1]{DW1}
W.~Donovan and M.~Wemyss, \emph{Noncommutative deformations and flops}, Duke Math.\ J.\ \textbf{165}, (2016), no.~8, 1397--1474.


\bibitem[DW3]{DW3}
W.~Donovan and M.~Wemyss, \emph{Twists and braids for general 3-fold flops}, \textsf{arXiv:1504.05320}, to appear J.\ Eur.\ Math.\ Soc.

\bibitem[ELO]{ELO1}
A.~Efimov, V.~Lunts and D.~Orlov, \emph{Deformation theory of objects in homotopy and derived categories. I. General theory.} Adv.\ Math.\ \textbf{222} (2009), no.~2, 359--401.

\bibitem[E03]{Eriksen2}
E.~Eriksen, \emph{An introduction to noncommutative deformations of modules}, Noncommutative algebra and geometry, 90--125, Lect.\ Notes Pure Appl.\ Math., \textbf{243}, Chapman and Hall/CRC, Boca Raton, FL, 2006.

\bibitem[E07]{Eriksen}
E.~Eriksen, \emph{Computing noncommutative deformations of presheaves and sheaves of modules}, Canad.\ J.\ Math.\ \textbf{62} (2010), no.~3, 520--542. 

\bibitem[GM]{GM}
W.~Goldman and J.~Millson, \emph{The deformation theory of representations of fundamental groups of compact K\"ahler manifolds}, IHES Publ.\ Math.\ No.\ \textbf{67} (1988), 43--96.

\bibitem[GNS]{GNS}
Y.~Gomi, I.~Nakamura and K.~Shinoda, \emph{Hilbert schemes of $G$-orbits in dimension three}. Kodaira's issue. Asian J.\ Math.\ \textbf{4} (2000), no.~1, 51--70.

\bibitem[GLR]{GLR}
B.~Greene, C.~I.~Lazaroiu and M.~Raugas, \emph{D-branes on non-abelian threefold quotient singularities}, Nuclear Phys.\ B \textbf{553} (1999), no.~3, 711--749. 

\bibitem[HT]{HT}
Z.~Hua and Y.~Toda, \emph{Contraction algebra and invariants of singularities}, Int.\ Math.\ Res.\ Not.\ IMRN \textbf{2018}, no.~10, 3173--3198.

\bibitem[IW1]{IW5}
O.~Iyama and M.~Wemyss, \emph{Singular derived categories of $\mathds{Q}$-factorial terminalizations and maximal modification algebras}, Adv.\ Math.\ \textbf{261} (2014), 85--121.

\bibitem[IW2]{IW6}
O.~Iyama and M.~Wemyss, \emph{Reduction of triangulated categories and Maximal Modification Algebras for $cA_n$ singularities}, J.\ Reine Angew.\ Math.\ (Crelle) \textbf{738} (2018), 149--202.

\bibitem[KIWY]{KIWY}
M.~Kalck, O.~Iyama, M.~Wemyss and D.~Yang, \emph{Frobenius categories, Gorenstein algebras and rational surface singularities},  Compos.\ Math.\ \textbf{151} (2015), no.~3, 502--534.

\bibitem[K1]{Karmazyn}
J.~Karmazyn, \emph{The length classification of threefold flops via noncommutative algebra}, \textsf{arXiv:1709.02720}.

\bibitem[K2]{Kawamata}
Y.~Kawamata, \emph{On multi-pointed non-commutative deformations and Calabi-Yau threefolds}, Compos.\ Math.\ \textbf{154} (2018), no.~9, 1815--1842.

\bibitem[L02]{Laudal}
O.~A.~Laudal, \emph{Noncommutative deformations of modules}. The Roos Festschrift volume, 2.\ Homology Homotopy Appl.\ \textbf{4} (2002), no.~2, part 2, 357--396.

\bibitem[LS]{LS}
L.~LeBruyn and S.~Symens, \emph{Partial desingularizations arising from non-commutative algebras}, {\sf arXiv:math/0507494}.

\bibitem[M99]{Manetti}
M.~Manetti, \emph{Deformation theory via differential graded Lie algebras}, Algebraic Geometry Seminars, 1998--1999 (Italian) (Pisa), 21--48, Scuola Norm.\ Sup., Pisa, 1999.

\bibitem[M]{Matsumura}
H.~Matsumura, \emph{Commutative ring theory}, Second edition. Cambridge Studies in Advanced Mathematics, \textbf{8}. Cambridge University Press, Cambridge, 1989. xiv+320 pp. ISBN: 0-521-36764-6.

\bibitem[NS]{NS}
A.~Nolla de Celis and Y.~Sekiya, \emph{Flops and mutations for crepant resolutions of polyhedral singularities}, Asian J.\ Math.\ \textbf{21} (2017), no.~1, 1--46.

\bibitem[R83]{Pagoda}
M.~Reid, \emph{Minimal models of canonical 3-folds}, Algebraic varieties and analytic varieties (Tokyo, 1981), 131--180, Adv. Stud. Pure Math., 1, North-Holland, Amsterdam, 1983.

\bibitem[S08]{Segal}
E.~Segal, \emph{The $A_\infty$ deformation theory of a point and the derived categories of local Calabi-Yaus}, J.\ Alg.\ \textbf{320} (2008), no.~8, 3232--3268.

\bibitem[T]{TodaGV}
Y.~Toda, \emph{Non-commutative width and Gopakumar--Vafa invariants}, Manuscripta Math.\ \textbf{148} (2015), no.~3--4, 521--533.

\bibitem[V04]{VdB1d}
M.~Van den Bergh, \emph{Three-dimensional flops and noncommutative rings}, 
Duke Math. J. \textbf{122} (2004), no.~3, 423--455. 

\bibitem[W]{HomMMP}
M.~Wemyss, \emph{Flops and Clusters in the Homological Minimal Model Program}, Invent.\ Math.\ \textbf{211} (2018), no.~2, 435--521.

\bibitem[Z]{Zerger}
T.~Zerger, \emph{Contraction criteria for reducible rational curves with components of length one in smooth complex threefolds}, 
Pacific J.\ Math.\ \textbf{212} (2003), no.~2, 377--394.

\end{thebibliography}
\end{document}